%


\documentclass[12pt]{amsart}

\RequirePackage[colorlinks,citecolor=blue,urlcolor=blue]{hyperref}

\usepackage{amsmath, amsthm, amsfonts, amsbsy, amssymb, upref, hyperref, pdfsync, enumerate} 
\usepackage{epsfig, graphicx}
\usepackage{color}
\usepackage{url}
\usepackage{xy}
\usepackage{ftnxtra}
\usepackage{fnpos}
\usepackage{tikz}
\usepackage[colorinlistoftodos,color=red!70]{todonotes}
\usepackage[margin=1in]{geometry}



\theoremstyle{theorem}
\newtheorem{thm}{Theorem}
\newtheorem{lem}[thm]{Lemma}
\newtheorem{cor}[thm]{Corollary}

\newtheorem{prop}[thm]{Proposition}

\theoremstyle{remark}
\newtheorem{defn}[thm]{Definition}
\newtheorem{rem}[thm]{Remark}

\newtheorem{ex}[thm]{Example}
\numberwithin{thm}{section} \numberwithin{equation}{section}

\newcommand{\tp}{^{\mathsf{T}}}
\newcommand{\RR}{\mathbb{R}}
\renewcommand{\SS}{\mathbb{S}}
\newcommand{\eps}{\varepsilon}
\newcommand{\Prob}{\mathbb{P}}
\newcommand{\EE}{\mathbb{E}}

\newcommand{\PP}{\mathbb{P}} 
\newcommand{\mcN}{\mathcal{N}}

\newcommand{\X}{\mathcal{X}}
\newcommand{\mcX}{\mathcal{X}}

\newcommand{\Trace}{\operatorname{Tr}}

\DeclareMathOperator{\Sec}{Sec}

\DeclareMathOperator{\grad}{grad}
\DeclareMathOperator{\Hess}{Hess}
\DeclareMathOperator{\Lip}{Lip}

\DeclareMathOperator{\pos}{pos}
\DeclareMathOperator{\vel}{mom}
\DeclareMathOperator{\Var}{Var}

\DeclareMathOperator{\InfSec}{Inf~Sec}
\DeclareMathOperator{\diam}{diam}

\newcommand\Rone[1]{\textcolor{gray}{R1: #1}}
\newcommand\Rtwo[1]{\textcolor{gray}{R2: #1}}
\newcommand\christof[1]{\textcolor{red}{CS: #1}}
\newcommand\simon[1]{\textcolor{cyan}{SR: #1}}

\newcommand{\vanish}[1]{}


\begin{document}


\title[Curvature and Concentration of HMC]{Curvature and Concentration of Hamiltonian Monte Carlo in High Dimensions}

\author{Susan Holmes}\thanks{Susan Holmes is supported by NIH grant R01-GM086884.}
\address{Department of Statistics, Stanford University, 390 Serra Mall, Stanford CA 94305}
\email{susan@stat.stanford.edu}
\author{Simon Rubinstein-Salzedo}\thanks{Simon Rubinstein-Salzedo is supported by NIH grant R01-GM086884.}
\email{simonr@stanford.edu}
\author{Christof Seiler}\thanks{Christof Seiler is supported by a postdoctoral fellowship from the Swiss National Science Foundation and a travel grant from the France-Stanford Center for Interdisciplinary Studies.}
\email{christof.seiler@stanford.edu}



\date{\today}
\maketitle

\begin{abstract}
In this article, we analyze Hamiltonian Monte Carlo (HMC) by placing it in the setting of Riemannian geometry using the Jacobi metric, so that each step corresponds to a geodesic on a suitable Riemannian manifold. We then combine the notion of curvature of a Markov chain due to Joulin and Ollivier with the classical sectional curvature from Riemannian geometry to derive error bounds for HMC in important cases, where we have positive curvature. These cases include several classical distributions such as multivariate Gaussians, and also distributions arising in the study of Bayesian image registration.
The theoretical development suggests the sectional curvature as a new diagnostic tool for
convergence for certain Markov chains.
\end{abstract}





\section{Introduction} \label{sec:intro}

Approximating integrals is central to most statistical endeavors. Here, we investigate an approach drawing from probability theory, Riemannian geometry, and physics that has applications in MCMC generation of posterior distributions for biomedical image analyses.

We take a measure space $\mcX$, a function $f:\mcX\to\RR$ (which for our purposes will be assumed Lipschitz), and a probability distribution $\pi$ on $\mcX$, and we aim to approximate \[I=\int_{\mcX} f(x)\, \pi(dx).\] One way to do so is to pick a  number $T$, choose points $x_1,x_2,\ldots,x_T\in\mcX$ sampled according to $\pi$, and estimate $I$ by \begin{equation} \label{approxintegral} \widehat{I} = \frac{1}{T}\sum_{i=1}^T f(x_i).\end{equation}

However, difficulties quickly arise. How do we sample from $\pi$? How do we select $T$ so that the error is below an acceptable threshold without having to choose  $T$ so big that computation is prohibitively time-consuming? And how do we bound the error? In this article, we address these issues.

\subsection{Main Contribution}
The goal of this article is to compute error bounds for $\PP(|I-\widehat{I}|\ge r)$ when points $x_1,x_2,\ldots,x_T\in\mcX$ are approximately sampled from $\pi$ using Hamiltonian Monte Carlo (see \S\ref{sec:hamiltonian}). Our bounds are applications of  theorems of Joulin and Ollivier in \cite{JO10} using a special notion of curvature. Our main  contribution is the combined use of the Jacobi metric  \cite{Pin75} (see \S\ref{sec:geometry}) with the curvature approach introduced by Joulin and Ollivier in~\cite{Joulin2007,JO10,Ollivier09}.

Our results specifically target the \emph{high-dimensional} setting in which there is no underlying low-dimensional model. 
We can show that, in important classes of distributions, $T$ 
depends only polynomially on the dimension and error tolerance. This is
relevant for the modern world where there are multiple sources
of high-dimensional data. For instance, medical imaging, 
for which we provide an example in \S\ref{sec:anatomy},
produces high-dimensional data, as points in one region of an image are essentially independent of those in other regions.

\subsection{Background}
The idea of constructing a Markov chain whose stationary
distribution is $\pi$ appeared originally in a five-author paper
\cite{Metropolis53}
which introduced the incremental random walk proposal, generalized
by Hastings to include 
independent proposals \cite{Hastings70}.
A recent overview of the subject can be found in \cite{Diaconis09}.

 
Unfortunately, questions about error bounds are more difficult, and in practice, the Metropolis-Hastings algorithm  can converge slowly. One reason is that the algorithm uses minimal information, considering only the probabilities of the proposal distribution and a comparison of probabilities of the target distribution. In particular, it does not use any geometric information.
It stands to reason that an algorithm that sees more of the structure of the problem ought to perform better.

We are going to explore a different generation method inspired from physics called Hamiltonian Monte Carlo. We imagine an object moving around on $\mcX$ continuously. From time to time, we measure the position $x_i$ of the object, with the aim of using these discrete measurements $x_i$ in (\ref{approxintegral}). We can imagine
using $\pi$ to determine the time between measurements, thereby ``distorting time'' based on $\pi$. 
In regions of high density, we increase the measurement frequency, 
so that we obtain many samples from these regions. In regions of low density, 
we decrease the measurement frequency,
so that we obtain few samples there.

An equivalent approach, which will make the link with 
Riemannian and differential geometry, is to think of $\pi$ as stretching and shrinking the space $\mcX$ so that regions of high density are physically larger, and regions of low density are physically smaller. These two approaches are in fact the same, as shrinking a region while keeping the time between measurements fixed has the same effect as keeping space uniform while varying the time between measurements.

The idea of stretching and shrinking space is nothing new in probability and statistics; for instance, inverse transform sampling for a distribution on $\RR$ samples a point $p$ from a distribution with cumulative distribution function $F$ by picking a uniform random number $x\in [0,1]$ and letting $p$ be the largest number so that $F(p)\le x$, as in Figure~\ref{fig:CDF_Cauchy}. Here, we are shrinking the regions of low density so that they are less likely to be selected.

\begin{figure} 
    \centering
    \includegraphics[width=0.4\textwidth]{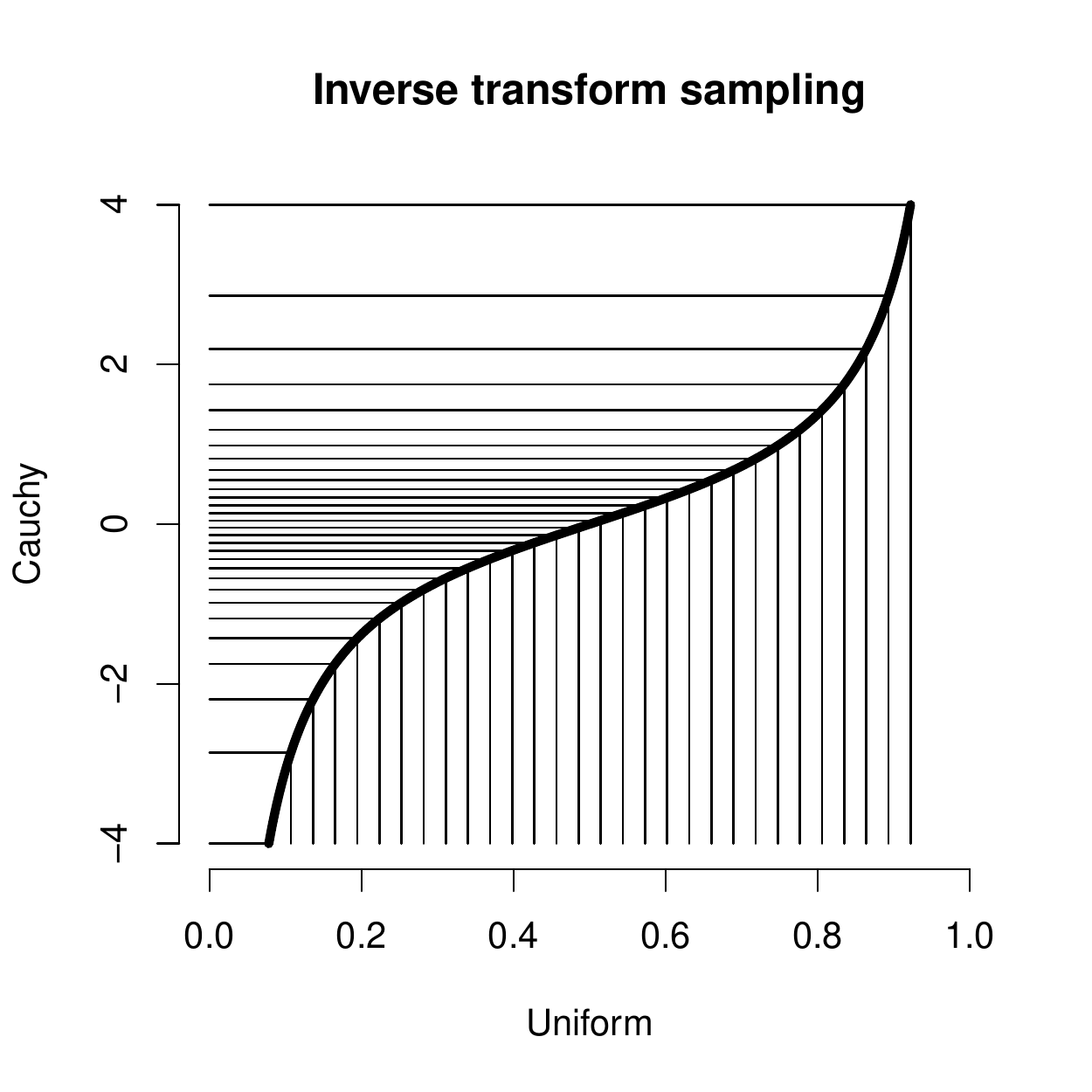}
    \caption{Inverse transform sampling of a standard Cauchy distribution.}
    \label{fig:CDF_Cauchy}
\end{figure}

\begin{ex} Consider the Cauchy distribution, which has cumulative distribution function $F(p)=\frac{1}{2}+\frac{1}{\pi}\arctan(p)$. Its inverse function is $F^{-1}(x)=\tan\left(\pi x-\frac{\pi}{2}\right)=-\cot(\pi x)$. To sample from this distribution, we pick $x\in[0,1]$ uniformly, and then we let $p=F^{-1}(x)$. Then $p$ is a Cauchy-random variable. This method is illustrated in Figure~\ref{fig:CDF_Cauchy}. \end{ex}

In order to start the process, we  put the particle in an initial position and start moving it. 
In our applications, we will assume that the starting point $x_0$ will be chosen using prior information, for instance using the mode of the
prior distribution in Bayesian computations. This is called a \emph{warm start} in the literature~\cite{Lovasz99}.



We review precise general statements from Joullin and Olliver's work~\cite{JO10} in \S\ref{sec:concentration},  in particular, Theorem~\ref{thm:ConcentrationInequality}, and then apply them to our case in \S\ref{sec:examples}

In \S\ref{sec:geometry}, we define a notion of curvature for Markov chains, in \S\ref{sec:concentration}, we use it to deduce error bounds for general Markov chains following \cite{JO10}, and in \S\ref{sec:examples}, we show new error bounds related to Markov chains motivated by the aforementioned physics analogy for three examples: a theoretical analysis of the multivariate Gaussian distribution in \S\ref{sec:GaussianExample} and the $t$ distribution \S\ref{sec:tDistribution}, and a real world example from medical image registration in \S\ref{sec:anatomy}. 

This article fulfills two goals. On the one hand, we produce new results on error bounds, which we believe to be of theoretical interest. On the other hand, we hope that it can serve as a user's guide for researchers in other areas of statistics
hoping to access  new tools from Riemannian geometry. We have made every effort to keep our presentation as  concrete as possible.

This article extends an early short version published as a conference article in the proceedings at NIPS \cite{SRH14a}. We present new introductory and background material, detailed mathematical developments of the proofs, and two new examples: the multivariate $t$ distribution and medical image registration.

\section*{Acknowledgements}

The authors would like to thank Sourav Chatterjee, Otis Chodosh, Persi Diaconis, Emanuel Milman, Veniamin Morgenshtern, Richard Montgomery, Yann Ollivier, Xavier Pennec, Mehrdad Shahshahani, and Aaron Smith for their insight and helpful discussions.

\section{Markov Chain Monte Carlo} \label{sec:MCMC}


Our goal in this article is to quantify the error made when
approximating \[I=\int_{\mcX} f\, d\pi \qquad \text{by} \qquad \widehat{I}=\frac{1}{T}\sum_{i=T_0+1}^T f(x_i),\] where $x_i$ are sampled using a special Markov chain
whose stationary distribution is
$\pi$ and $T_0$ represents a time at which a satisfactory soft starting point has been reached. 
The standard Metropolis-Hastings algorithm \cite{Metropolis53,Hastings70} 
uses  a proposal distribution $P_x$ starting  at $x\in\mcX$ and has
an acceptance probability $\alpha$ computed from the target and proposal.



\begin{rem} 
Using the Metropolis-Hastings algorithm replaces the task of sampling one point from $\pi$ directly with the task of sampling many times from the (potentially much simpler) proposal distributions $\{P_x\}_{x\in\mcX}$. 
\end{rem}

The Metropolis-Hastings (MH) algorithm provides flexibility in the choice of $P_x$. In practice, it is common to let $P_x$ be a Gaussian distribution centered at $x$, or a uniform distribution on a ball centered at $x$. It is necessary to compromise between high acceptance probabilities $\alpha$ and large variances of $P_x$. In order to force $\alpha\approx 1$, we can take  tiny steps, so that $P_x$ is highly concentrated near $x$. However, many steps are necessary to explore $\mcX$ thoroughly. On the other hand, $P$ can be chosen to move quickly at the cost of rarely accepting.

Gelman, Roberts, and Gilks \cite{GRG96} show that, in the case of a Gaussian $\mcN(0,I_d)$
target distribution   and spherically symmetric proposal distributions, 
the optimal proposal distribution, i.e.\ achieving the fastest mixing time, has standard deviation roughly $2.38/\sqrt{d}$ and acceptance probability roughly $0.234$ as $d\to\infty$. Since the step size goes to 0 as $d\to\infty$, it takes many steps to sample in large dimensions. 

In contrast, Hamiltonian Monte Carlo (HMC) (\S\ref{sec:hamiltonian}), a variant of the MH algorithm, allows us to overcome the issue of low acceptance probabilities. Beskos, Pillai, Roberts, Sanz-Serna, and Stuart \cite{Beskos13} show that, in the case of product distributions (which has as a special case the multivariate Gaussian with identity covariance), to reach $O(1)$ acceptance probability as the $d \to \infty$ one needs to scale the the step size by a factor of $O(d^{-1/4})$ as opposed to $O(d^{-1/2})$ for the MH algorithm. In addition, in many practical applications HMC has proven to be very efficient.  

For both MH and HMC, we need to define the number of steps $T$ that are required for $I$ to get close to $\widehat{I}$. 
We will analyze HMC in this article to give guidance on how large $T$ needs to be under suitable assumptions. 
Our focus here is in the computation of the variance component
(the second part of the right hand side) in the mean square error: 
\[
\EE_x(|\hat{I}-I|^2)= |\EE_x(\hat{I})-I|^2 + \Var_x \hat{I} .
\]
We first present an appropriate setting for HMC, which involves some Riemannian geometry.

\section{Riemannian Manifolds} \label{sec:riemann}

We introduce what we need for \S\ref{sec:hamiltonian} from differential and Riemannian geometry, saving ideas about curvature for manifolds and probability measures for \S\ref{sec:geometry}.
We go through the necessary material here rather quickly and we invite the interested reader to consult \cite{docarmo92} or a similar reference for a more thorough exposition.


\begin{defn} Let $\mcX$ be a $d$-dimensional manifold, and let $x\in \mcX$ be a point. Then the tangent space $T_x\mcX$ consists of all $\gamma'(0)$, where $\gamma:(-\eps,\eps)\to \mcX$ is a smooth curve and $\gamma(0)=x$. (See Figure \ref{fig:tangentspace}.) The tangent bundle $T\mcX$ of $\mcX$ is the manifold whose underlying set is the disjoint union $\bigsqcup_{x\in \mcX} T_x\mcX$. \end{defn}

\begin{figure}
    \centering
    \includegraphics[width=0.6\textwidth]{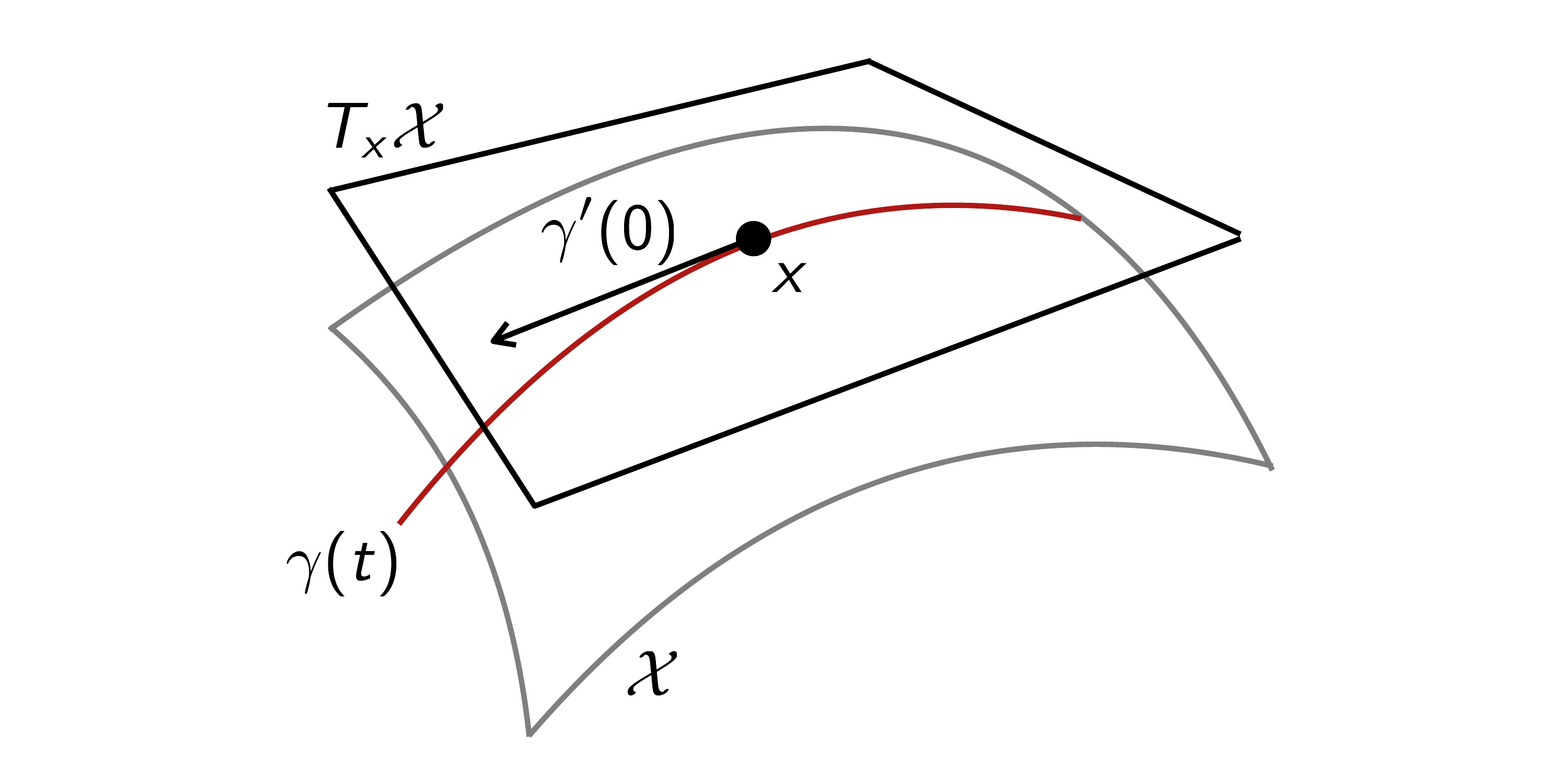}
    \caption{The tangent space $T_x\mcX$ to $\mcX$ at $x$.}
    \label{fig:tangentspace}
\end{figure}

\begin{rem} We can stitch $T_x\mcX$ and $T\mcX$ into manifolds. The details of that construction can be found in \cite{docarmo92}. For us, it suffices to note that $T_x\mcX$ is a vector space of dimension $d$, and $T\mcX$ is a manifold of dimension $2d$. \end{rem}

\begin{defn} 
A Riemannian manifold is a pair $(\mcX,\langle\cdot,\cdot\rangle)$, where $\mcX$ is a smooth ($C^\infty$) manifold and $\langle\cdot,\cdot\rangle$ is a positive definite bilinear form on each tangent space $T_x\mcX$, which varies smoothly with $x\in\mcX$. We call $\langle\cdot,\cdot\rangle$ the (Riemannian) metric. \end{defn}

The Riemannian metric allows us to measure distances between two points on $\mcX$. We define the \emph{length} of a curve $\gamma:[a,b]\to \mcX$ to be \[L(\gamma) = \int_a^b \langle \gamma'(t),\gamma'(t)\rangle^{1/2}\, dt,\] and the \emph{distance} $\rho(x,y)$ to be \[\rho(x,y)=\inf_{\substack{\gamma(0)=x \\ \gamma(1)=y}} L(\gamma).\]

A \emph{geodesic} on a Riemannian manifold is a curve $\gamma:[a,b]\to \mcX$ that locally minimizes distance, in the sense that if $\widetilde{\gamma}:[a,b]\to \mcX$ is another path with $\widetilde{\gamma}(a)=\gamma(a)$ and $\widetilde{\gamma}(b)=\gamma(b)$ with $\widetilde{\gamma}(t)$ and $\gamma(t)$ sufficiently close together for each $t\in [a,b]$, then $L(\gamma)\le L(\widetilde{\gamma})$.

\begin{ex} On $\RR^d$ with the standard metric, geodesics are exactly the line segments, since the shortest path between two points is along a straight line. On $\SS^d$, the geodesics are exactly segments of great circles. \end{ex}

In this article, we are primarily concerned with the case of $\mcX=\RR^d$. However, it will be essential to think in terms of Riemannian manifolds, as our metric on $\mcX$ will vary from the standard metric. In \S\ref{sec:geometry}, we will see how to choose a metric, the Jacobi metric, that is nicely tailored to a probability distribution $\pi$ on $\mcX$.

\section{Hamiltonian Mechanics} \label{sec:hamiltonian}

Physicists \cite{Duane1987216} proposed  a MC sampling scheme that uses Hamiltonian dynamics to improve convergence rates. The method  mimics the movement of a body under potential and kinetic energy changes to avoid diffusive behavior. The stationary probability will be linked to the potential energy. The reader is invited to read \cite{Neal11} for an enlightening survey of the subject.

The setup is as follows: let $\mcX$ be a manifold, and let $\pi$ be a target distribution on $\mcX$. As with the Metropolis-Hastings algorithm, we start at some point $q_0\in \mcX$. However, we use an analogue of the laws of physics to tell us where to go for future steps. In this section, we will work on Euclidean spaces $\mcX=\RR^d$ with the standard Euclidean metric, but in the next section we will use a special metric induced by $\pi$.


The model incorporates two types of energy: potential energy and kinetic energy. The potential energy is a function solely of the position of a particle, whereas the kinetic energy depends not just on the position but also its motion, and in particular its momentum; both the position and the momentum are elements of $\RR^d$. 
In a more abstract setting, we can define a potential energy function $V:\RR^d\to\RR$ and a kinetic energy function $K:\RR^d\times\RR^d\to\RR$. Both $V$ and $K$ should be smooth functions. We typically write a point in $\RR^d\times\RR^d$ as $(q,p)$, where $q,p\in \RR^d$. We call $q$ the \emph{position} and $p$ the \emph{momentum}. We will sometimes write $\RR^d_{\pos}$ for the space of positions and $\RR^d_{\vel}$ for the space of momenta to avoid confusion.

The position and momentum play different roles. The position space is the state space. The momentum, on the other hand, is only an auxiliary variable which helps us update the position and is of no interest in its own right. 

We define the \emph{Hamiltonian function} $H:\RR^d_{\pos}\times\RR^d_{\vel}\to\RR$ by $H(q,p)=V(q)+K(q,p)$. This represents the total energy of a particle with position $q$ and momentum $p$.

According to the laws of Hamiltonian mechanics, as a particle with position $q(t)$ and momentum $p(t)$ travels, $q$ and $p$ satisfy the Hamilton equations \begin{equation} \label{hamilton} \frac{dq}{dt}=\frac{\partial H}{\partial p},\qquad \frac{dp}{dt}=-\frac{\partial H}{\partial q},\end{equation} where if $d>1$ this means that these equations hold in each coordinate, i.e.\ \[\frac{dq_i}{dt}=\frac{\partial H}{\partial p_i},\qquad \frac{dp_i}{dt}=-\frac{\partial H}{\partial q_i}.\] 

A simple consequence of the Hamilton equations is the following  proposition (see~\cite[\S2.2]{Neal11}).

\begin{prop}[Conservation of energy] \label{consenergy} The Hamiltonian is constant along any trajectory $(q(t),p(t))$ satisfying (\ref{hamilton}). \end{prop}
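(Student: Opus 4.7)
The plan is to show that $H(q(t), p(t))$ has vanishing time derivative along any trajectory satisfying the Hamilton equations, which immediately implies that $H$ is constant along that trajectory.

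First, I would apply the multivariate chain rule to the composition $t \mapsto H(q(t), p(t))$, writing
\[
\frac{dH}{dt} = \sum_{i=1}^d \frac{\partial H}{\partial q_i} \frac{dq_i}{dt} + \sum_{i=1}^d \frac{\partial H}{\partial p_i} \frac{dp_i}{dt}.
\]
This step uses only smoothness of $H$, $q$, and $p$, which is built into the setup.

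Next, I would substitute the Hamilton equations (\ref{hamilton}) into each term. Replacing $dq_i/dt$ by $\partial H/\partial p_i$ and $dp_i/dt$ by $-\partial H/\partial q_i$ yields
\[
\frac{dH}{dt} = \sum_{i=1}^d \frac{\partial H}{\partial q_i} \frac{\partial H}{\partial p_i} - \sum_{i=1}^d \frac{\partial H}{\partial p_i} \frac{\partial H}{\partial q_i} = 0.
\]
Hence $H$ is constant on the trajectory, as claimed.

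There is essentially no obstacle: the proof is a two-line computation in which the cross terms cancel because of the opposite signs in the Hamilton equations. The only mild subtlety, if one wished to be careful, is to note that this cancellation is precisely the antisymmetry built into the symplectic structure on phase space, but for this proposition a direct chain-rule argument suffices.
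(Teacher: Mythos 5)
Your proof is correct, and it is exactly the standard chain-rule-plus-cancellation argument that the paper defers to Neal's survey (\S 2.2 of \cite{Neal11}) rather than spelling out. Nothing is missing.
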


The Hamilton equations (\ref{hamilton}) tell us how the position and momentum of a particle evolve over time, given a starting position and momentum. As a result, we can compute $q(t_1)$ and $p(t_1)$, its position and momentum at time $t_1$. Here, $t_1$ is a parameter that can be tuned to suit the distribution.

Running Hamiltonian Monte Carlo is similar to the classical MCMC in that we propose a new point in $\mcX$ based on the current point, and then either accept or reject it. However, the method of proposal differs from the classical method. In Hamiltonian Monte Carlo, to choose $q_{i+1}$ from $q_i$, we select a momentum vector $p_i$ from $\RR^d_{\vel}$, chosen according to a $\mcN(0,I_d)$ distribution. We then solve the Hamilton equations (\ref{hamilton}) with initial point $q(0)=q_i$ and $p(0)=p_i$, and we let $q_{i+1}^\ast=q(t_1)$ and $p_{i+1}^\ast=p(t_1)$. We accept and make $q_{i+1}=q_{i+1}^\ast$ with probability \[\alpha=\min(1,\exp(-H(q_{i+1}^\ast,p_{i+1}^\ast)+H(q_i,p_i)))\] and reject and let $q_{i+1}=q_i$ otherwise. However, by Proposition~\ref{consenergy}, the exponential term is 1, i.e.\ theoretically, we should always accept.

\begin{rem} In practice, we may occasionally reject, as it will be necessary to solve the Hamilton equations numerically, thereby introducing numerical errors. 
In truly high dimensional settings,  the acceptance probability should be tuned to around $0.65$ \cite{Beskos11} by varying the step size in the numerical integration procedure.
\end{rem} 

Note that at every step, we pick a fresh momentum vector, independent of the previous one.
In order to make the stationary distribution of the $q_n$'s be $\pi$, we choose $V$ and $K$ following Neal \cite{Neal11}; we take \begin{equation} \label{VKdef} V(q) = -\log\pi(q)+C,\qquad K(p)=\frac{1}{2}\|p\|^2,\end{equation} where $C$ is a convenient constant. Note that $V$ only depends on $q$ and $K$ only depends on $p$. $V$ is larger when $\pi$ is smaller, and so 
trajectories are able to move more quickly starting from lower density regions than out of higher density regions.


\begin{rem} The Hamiltonian is \emph{separable} in the case of a distribution on Euclidean space, meaning that it can be expressed as the sum of a function of $q$ alone and a function of $p$ alone. However, we write $K(q,p)$ as a function of both $q$ and $p$, because tangent vectors should not be thought of as being detachable from the underlying manifold. \end{rem}

\begin{rem} It is possible to choose other distributions for the momentum. Doing so changes $K$ accordingly. See \cite[\S 3.1]{Neal11} for a discussion of how to relate $K$ and the distribution of $p$. 
\end{rem}

\begin{ex} \label{firstnormalex} If $\pi=\mcN(0,\Sigma)$ is a multivariate Gaussian distribution, then, by choosing $C$ to be a suitable normalizing constant, we can take \[V(q)=\frac{1}{2}q\tp\Sigma^{-1}q, \qquad K(p)=\frac{1}{2}\|p\|^2.\]
Taking $\pi=\mcN(0,1)$ to be the standard univariate normal, we can see the trajectory of HMC explicitly. Suppose we choose a terrible starting point $q_0=1000$ and $p_0=1$, so that we are initially moving away from the high-density region. We quickly recover: the Hamilton equations become $\frac{dq}{dt}=p,\, \frac{dp}{dt}=-q$. Solving these equations with our initial conditions, we find that $q(t)=1000\cos(t)+\sin(t)$. Suppose we take $t_1=1$, meaning that we follow the trajectory until $t=1$ before choosing a new $p$. Then when $t=t_1=1$, we have $q(1)=1000\cos(1)+\sin(1)\approx 541$. Hence, in only one step, we have already made a substantial recovery. On the other hand, if we start at $q_0=1.5$, again with $p=1$, then after one second, we reach $q(1)=1.5\cos(1)+\sin(1)\approx 1.65$, so we stay in a sensible location. \end{ex}

There are several reasons to expect Hamiltonian Monte Carlo to perform better than classical MCMC. For one thing, there are no (or at least fewer) rejections. Also, since the potential energy is greater in regions of $\mcX$ with lower $\pi$-density, the Hamiltonian trajectory moves more quickly starting at such a point, allowing a rapid escape; on the other hand, the trajectory moves more slowly away from a region of high density, encouraging the chain to spend more time there. The reason for this behavior is that the potential energy is higher in regions of lower density, which makes the total energy higher, which in turn will eventually make the particle move faster since it will transfer its energy from potential to kinetic energy. Furthermore, an unfortunate tendency of classical MCMC is diffusive behavior: the chain moves back and forth for several steps in a row; this behavior is less likely in the HMC setting, since the potential energy dictating the step size changes at every step. Finally, we expect it to perform better because the Hamiltonian trajectory adjusts continuously to the local shape of $\mcX$, rather than taking discrete steps that may not detect the fine structure of the space.

In practice, we have found that HMC outperforms MCMC as did Neal in \cite[\S 3.3]{Neal11} who performed simulations that demonstrated HMC's advantage over MCMC.


\section{Curvature} \label{sec:geometry}

We can associate a notion of curvature to a Markov chain, an idea introduced by Ollivier in \cite{Ollivier09} and Joulin in \cite{Joulin2007}
following work of Sturm~\cite{Sturm,Sturm2}. We apply this notion of curvature to the HMC chain whose stationary distribution is our target distribution. This will allow us  to obtain error bounds for numerical integration
in \S\ref{sec:examples} using Hamiltonian Monte Carlo in the cases when HMC has positive curvature. 

In order to bring the geometry and the probability closer together, we will deform our state space $\mcX$ 
to take the probability distribution into account, in a manner reminiscent of the inverse transform method mentioned in the introduction. 
Formally, this amounts to putting a suitable \emph{Riemannian metric} on our $\mcX$.
 
Here $\mcX$ is a \emph{Riemannian manifold}: the Euclidean space $\RR^d$ with the extra Riemannian metric.
Given a probability distribution $\pi$ on $\mcX=\RR^d$, we now define a metric on
$\mcX$ 
that is tailored to $\pi$ and the Hamiltonian it induces (see \S\ref{sec:hamiltonian}). This construction is originally due to Jacobi, but our treatment follows Pin in \cite{Pin75}.

\begin{defn} Let $(\mcX,\langle\cdot,\cdot\rangle)$ be a Riemannian manifold, and let $\pi$ be a probability distribution on $\mcX$. Let $V$ be the potential energy function associated to $\pi$ by (\ref{VKdef}). For $h\in\RR$, we define the Jacobi metric to be \[g_h(\cdot,\cdot)=2(h-V)\langle\cdot,\cdot\rangle.\] \end{defn}

\begin{rem} 
$(\mcX,g_h)$ is not necessarily a Riemannian manifold, since $g_h$ will not be positive definite if $h-V$ is ever nonpositive. We could remedy this situation by restricting to the subset of $\mcX$ on which $h-V>0$. In fact, this restriction will happen automatically, as we will always select values of $h$ for which $h-V>0$; indeed, $h$ will be $V+K$, and if $d\ge 2$, $K$ will be positive almost surely. \end{rem}

The point of the Jacobi metric is the following result of Jacobi, following Maupertuis:

\begin{thm}[Jacobi-Maupertuis Principle, \cite{Jacobi09}] Trajectories $q(t)$ of the Hamiltonian equations \ref{hamilton} with total energy $h$ are geodesics of $\mcX$ with the Jacobi metric $g_h$.\end{thm}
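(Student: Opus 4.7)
The plan is to identify Hamiltonian trajectories of energy $h$ with geodesics of $g_h$ by showing that, under the energy constraint, the Maupertuis abbreviated action coincides with the Jacobi length functional, and then to invoke the classical variational characterizations of both objects.

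First I would write down the Jacobi length of a smooth curve $q:[a,b]\to\mcX$:
$$L_{g_h}(q) = \int_a^b \sqrt{g_h(\dot q,\dot q)}\,dt = \int_a^b \sqrt{2(h-V(q(t)))}\,\|\dot q(t)\|\,dt,$$
noting that this functional is reparameterization invariant; by definition its critical points (among curves joining two fixed endpoints) are the $g_h$-geodesics, up to reparameterization.

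Next, on a Hamiltonian trajectory of energy $h$ the first equation in \eqref{hamilton} together with $K(q,p)=\tfrac12\|p\|^2$ gives $\dot q = p$, while Proposition~\ref{consenergy} gives $\tfrac12\|\dot q\|^2 = h-V(q)$, i.e.\ $\|\dot q\| = \sqrt{2(h-V)}$. Substituting into $L_{g_h}$, the integrand $\sqrt{2(h-V)}\,\|\dot q\|$ collapses to $\|\dot q\|^2 = 2K$, so
$$L_{g_h}(q) = \int_a^b 2K\,dt = \int_a^b p\cdot \dot q\,dt,$$
which is the classical Maupertuis abbreviated action.

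The final step is Maupertuis' principle itself: I would derive it from Hamilton's principle applied to $S=\int(K-V)\,dt$ by writing, on the energy shell $K+V=h$,
$$S = \int_a^b (2K-h)\,dt = \int_a^b 2K\,dt - h(b-a),$$
and varying paths with fixed spatial endpoints, energy pinned to $h$, and $b$ allowed to slide; the transversality condition at the endpoint absorbs the $h(b-a)$ term, so Hamilton's principle reduces to $\delta\!\int 2K\,dt=0$ on this class. Combined with the previous paragraph, this identifies the $g_h$-geodesics with Hamiltonian trajectories of energy $h$ as curves in $\mcX$.

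The main obstacle is the bookkeeping of the variational principles in the last step: one must fix the energy and the spatial endpoints while letting $b-a$ vary, verify that the reparameterization-invariant length problem and the fixed-energy abbreviated-action problem produce the same critical curves, and confirm that no pathologies arise on the locus $h=V$ (harmless because $K>0$ almost surely when $d\ge 2$, per the remark following the definition of $g_h$).
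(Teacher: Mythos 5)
The paper does not prove this statement at all: it is quoted as a classical result, with the citation to Jacobi (and the surrounding treatment following Pin), so there is no internal proof to compare against. Your sketch is the standard variational derivation and is sound in outline: the identification of the Jacobi length $\int\sqrt{2(h-V)}\,\|\dot q\|\,dt$ with the abbreviated action $\int p\cdot\dot q\,dt = \int 2K\,dt$ on the energy shell is correct (using $\dot q=p$ and $\tfrac12\|\dot q\|^2=h-V$ from Proposition~\ref{consenergy}), and you correctly locate the real work in Maupertuis' principle itself, i.e.\ the constrained variation with fixed spatial endpoints, fixed energy, and variable terminal time, where the general endpoint-variation formula $\delta S = p\,\delta q - H\,\delta t$ (evaluated at the endpoints) is what "absorbs" the $h(b-a)$ term. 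Two small points to tighten: critical points of the length functional are geodesics only up to reparameterization, so you should note that one may always reparameterize an arbitrary competitor curve to satisfy $\tfrac12\|\dot q\|^2=h-V$ (possible wherever $h>V$), which is exactly what makes the two variational problems range over the same geometric curves; and if you want to avoid the variable-endpoint bookkeeping entirely, there is a more robust alternative route, closer in spirit to the paper's reference Pin: compute the geodesic equations of the conformal metric $g_h=2(h-V)\langle\cdot,\cdot\rangle$ directly and check that, after the time reparameterization $ds = 2(h-V)\,dt$, they become Newton's equations $\ddot q=-\grad V$, which is equivalent to the Hamilton equations \eqref{hamilton} for the separable Hamiltonian used here. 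Your remark that the degeneracy locus $h=V$ is harmless (since $K>0$ almost surely for $d\ge 2$) matches the paper's own remark following the definition of $g_h$.
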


This theorem provides a link between a probability distribution $\pi$ and the Hamiltonian system from HMC.
The Riemannian manifold equipped with the Jacobi metric encodes both the behavior of HMC and the target distribution $\pi$. 
This allows us to reduce the analysis of HMC to a geometric problem by studying the geodesics of this Riemannian manifold through the usual geometric tools such as curvature. This follows the spirit of comparison theorems in classical Riemannian geometry where complicated geometries are compared to the three types of spaces --- the negatively curved spaces, where geodesics starting at the same point spread out; flat spaces, where geodesics correspond to straight lines; and positively curved spaces, where geodesics starting at the same point meet again. We will show that, in many cases, the manifolds associated to HMC are close to spheres in high dimensions, and that an HMC random walk reduces to something close to a geodesic random walk on a sphere (where geodesics are great circles on the sphere).

The most convenient way for us to think about the Jacobi metric on $\mcX$ is as distorting the space to suit the probability measure. In order to do this, we make regions of high density larger, and we make regions of low density smaller. However, the Jacobi metric does not completely override the old notion of distance and scale; the Jacobi metric provides a \emph{compromise} between physical distance and density of the probability measure.

Another, essentially equivalent, way to think about the Jacobi metric is as a distortion of time. This is particularly natural since Hamiltonians describe how states of a system evolve over time. In this analogy, the Jacobi metric slows down time in regions of high probability and speeds it up in regions of low probability.  As a result it takes a long time to move from high to low probability regions, but less time to move in the opposite direction.

As the Hamiltonian Monte Carlo progresses, $h$ changes at every step, so the metric structure varies as we run the chain, moving between different Riemannian manifolds. However, we prefer to think of the chain as running on a single manifold, with a changing metric structure.

Another important notion for us is that of the exponential map. Given a Riemannian manifold $\mcX$ and a point $x\in \mcX$, there is a canonical map $\exp:T_x\mcX\to \mcX$. If $v\in T_x\mcX$, then $\exp(v)$ is obtained by following the unique geodesic in the direction of $v$ whose distance is $\|v\|$ measured in Riemannian metric; $\exp(v)$ is then the endpoint of this geodesic.

Here we provide 
some facts that give an intuition about sectional curvature.

The \emph{sectional curvature} in the plane spanned by two linearly independent tangent vectors $u,v\in T_x\mcX$ is defined for
a $d$-dimensional Riemannian manifold $\mcX$. Two distinct points $x,y \in \mcX$ and two tangent vectors $v \in T_x\mcX,v' \in T_y\mcX$ at $x$ and $y$ are related to each other by parallel transport along the geodesic in the direction of $u$: $v'$ is a tangent vector at $y$ which is, in a suitable sense, parallel to $v$ and is obtained from $v$ by constructing a family of parallel vectors at each point along the geodesic from $x$ to $y$. Let $\delta$ be the length of the geodesic between $x$ and $y$, and $\eps$ the length of $v$ (same as $v'$). The sectional curvature $\Sec_x(u,v)$ at point $x$ is defined in terms of the geodesic distance $\rho$ between the two endpoints $\exp_x(\eps v)$ and $\exp_y(\eps v')$ as the quantity that satisfies the equation
\[ \rho(\exp_x(\eps v),\exp_y(\eps v')) = \delta \left( 1-\frac{\eps^2}{2} \Sec_x(u,v) + O(\eps^3 + \eps^2 \delta) \right) \mbox{ as }(\eps,\delta) \to 0.\]
Figure \ref{fig:SectionalCurvatureSphere} depicts the sectional curvature on a sphere, where the endpoints of the two red curves starting at $x$ and $y$ correspond to the endpoints $\exp_x(\eps v)$ and $\exp_y(\eps v')$.
The dashed lines indicate geodesics going from $x$ to $y$, and from $\exp_x(\eps v)$ to $\exp_y(\eps v')$. As we get closer to the north pole, endpoints get closer together. Sectional curvature describes this convergence of endpoints: higher curvature means faster convergence. 
See also~\cite[Proposition 6]{Ollivier09}.
\begin{figure}
    \centering
    \includegraphics[width=0.35\textwidth]{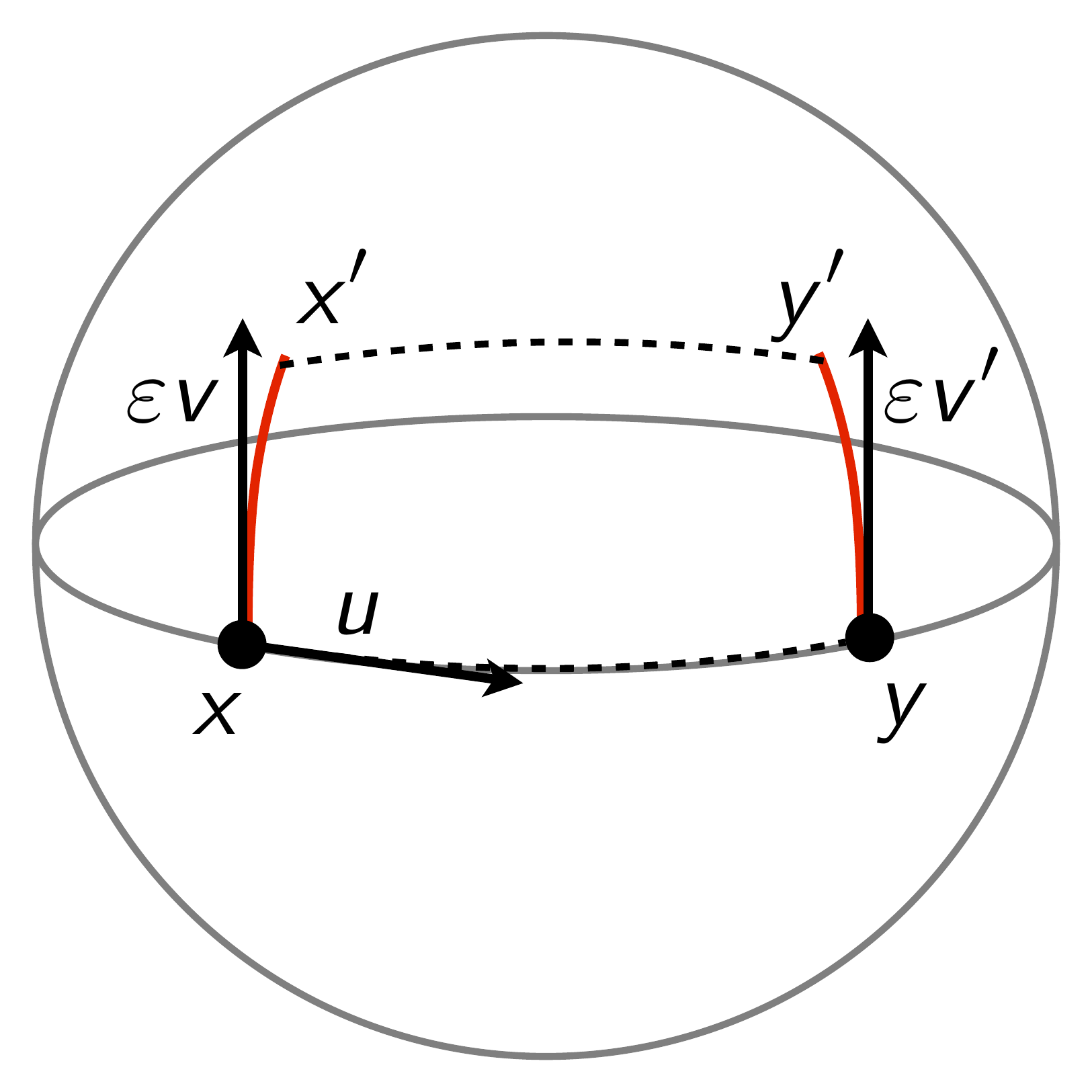}
    \includegraphics[width=0.35\textwidth]{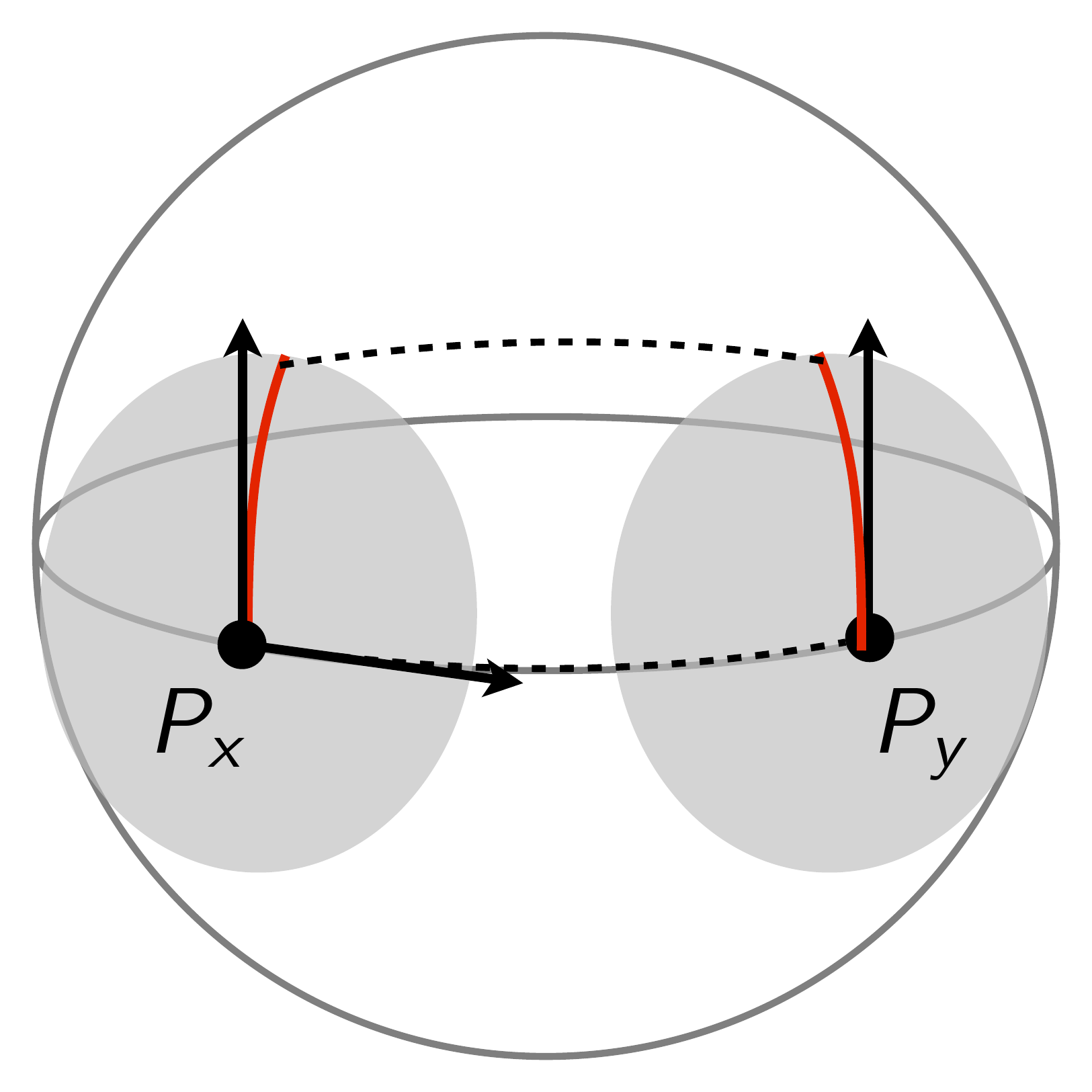}
    \caption{Sketch of positive sectional curvature (left) and coarse Ricci curvature (right) on a sphere.}
    \label{fig:SectionalCurvatureSphere}
\end{figure}

We let $\InfSec$ denote the infimum of $\Sec_x(u,v)$, where $x$ runs over $\mcX$ and $u,v$ run over all pairs of linearly independent tangent vectors at $x$.

\begin{rem} \label{riccicheating} In practice, it may not be easy to compute $\InfSec$ precisely. As a result, we can approximate it by running a suitable Markov chain on the collection of pairs of linearly independent tangent vectors of $\mcX$; say we reach states $(x_1,u_1,v_1),(x_2,u_2,v_2),\ldots,(x_t,u_t,v_t)$. Then we can approximate $\InfSec$ by the \emph{empirical} infimum of the sectional curvatures $\inf_{1\le i\le t}\Sec_{x_i}(u_i,v_i)$. This approach has computational benefits, but also theoretical benefits: it allows us to ignore low sectional curvatures that are unlikely to arise in practice. \end{rem}

Note that $\Sec$ depends on the metric. There is a formula, due to Pin \cite{Pin75}, connecting the sectional curvature of a Riemannian manifold equipped with some reference metric, with that of the Jacobi metric. We write down an expression for the sectional curvature in the special case where $\mcX$ is a Euclidean space and $u$ and $v$ are orthonormal tangent vectors at a point $x\in\mcX$:
\begin{multline} \label{secformula}
\Sec(u,v) = \frac{1}{8(h-V)^3} \Big( 2(h-V) \Big[ \langle (\Hess V)u,u \rangle + \langle (\Hess V)v,v \rangle \Big] \\ 
+ 3 \Big[ \|\grad V\|^2\cos^2(\theta) + \|\grad V\|^2\cos^2(\beta)\Big] - \|\grad V\|^2 \Big).
\end{multline}
Here, $\theta$ is defined as the angle between $\grad V$ and $u$, and $\beta$ as the angle between $\grad V$ and $v$, in the standard Euclidean metric.
We will also need a special notion of curvature (Figure~\ref{fig:SectionalCurvatureSphere}), known as \emph{coarse Ricci curvature}, for Markov chains. 
To define it, we will use the distance between measures to be the standard Wasserstein metric.

\begin{defn} Let $\mcX$ be a metric measure space with metric $\rho$, and let $\mu$ and $\nu$ be two probability measures on $\mcX$. Then the Wasserstein distance
between them is defined as \[W_1(\mu,\nu)=\inf_{\xi\in\Pi(\mu,\nu)} \iint_{\mcX\times\mcX} \rho(x,y)\, \xi(dx,dy).\] Here $\Pi(\mu,\nu)$ is the set of measures on $\mcX\times\mcX$ whose marginals are $\mu$ and $\nu$. \end{defn}

If $P$ is the transition kernel for a Markov chain on a metric space $(\mcX,\rho)$, let $P_x$ denote the transition probabilities starting from state $x$. We define the coarse Ricci curvature $\kappa(x,y)$ as the function that verifies:
\[W_1(P_x,P_y) = (1-\kappa(x,y))\rho(x,y).\] We write $\kappa$ for $\inf_{x,y\in\mcX} \kappa(x,y)$. 

We shall see in \S\ref{sec:examples} that there is a close connection between sectional curvature of a Riemannian manifold and coarse Ricci curvature of a Markov chain.




\subsection{Positive Curvature} \label{sec:poscurv}

In order to produce error bounds for a distribution $\pi$, it is necessary for the HMC process associated to $\pi$ to have positive curvature. Thus, it is important to know, at an intuitive level, when to expect this to happen, and when to expect this to fail.

Roughly, coarse Ricci curvature for a Markov chain on a metric space $\mcX$ can be interpreted as follows: Suppose $x,y\in\mcX$ are two nearby points. Suppose we take a step starting from $x$ to a point $x'$, and we take the ``corresponding'' step from $y$ to $y'$.  If the coarse Ricci curvature $\kappa(x,y)$ is positive, then on average, the distance between $x'$ and $y'$ is \emph{smaller} than the distance between $x$ and $y$. By contrast, if the curvature is 0, then the distance between $x'$ and $y'$ is on average \emph{equal} to the distance between $x$ and $y$, whereas if the curvature is negative, then the distance between $x'$ and $y'$ is on average \emph{greater} than the distance between $x$ and $y$.

Based on this interpretation, we expect multimodal distributions $\pi$ to give us negative curvature. To see this, suppose $\pi$ is a symmetric bimodal distribution with modes $a$ and $b$, and let $x$ and $y$ be two nearby points between the two modes, with $x$ slightly closer to $a$ than $b$, and $y$ slightly closer to $b$ than $a$. Then, if we take a step from $x$, the resulting point $x'$ is likely to move toward $a$, whereas if we take a step from $y$, the resulting point $y'$ is likely to move toward $b$. Hence, we expect the distance between $x'$ and $y'$ to be larger than the distance between $x$ and $y$, giving us negative curvature; see Figure \ref{fig:BimodalNegativeCurvature} for an illustration.
\begin{figure}
    \centering
    \includegraphics[width=0.6\textwidth]{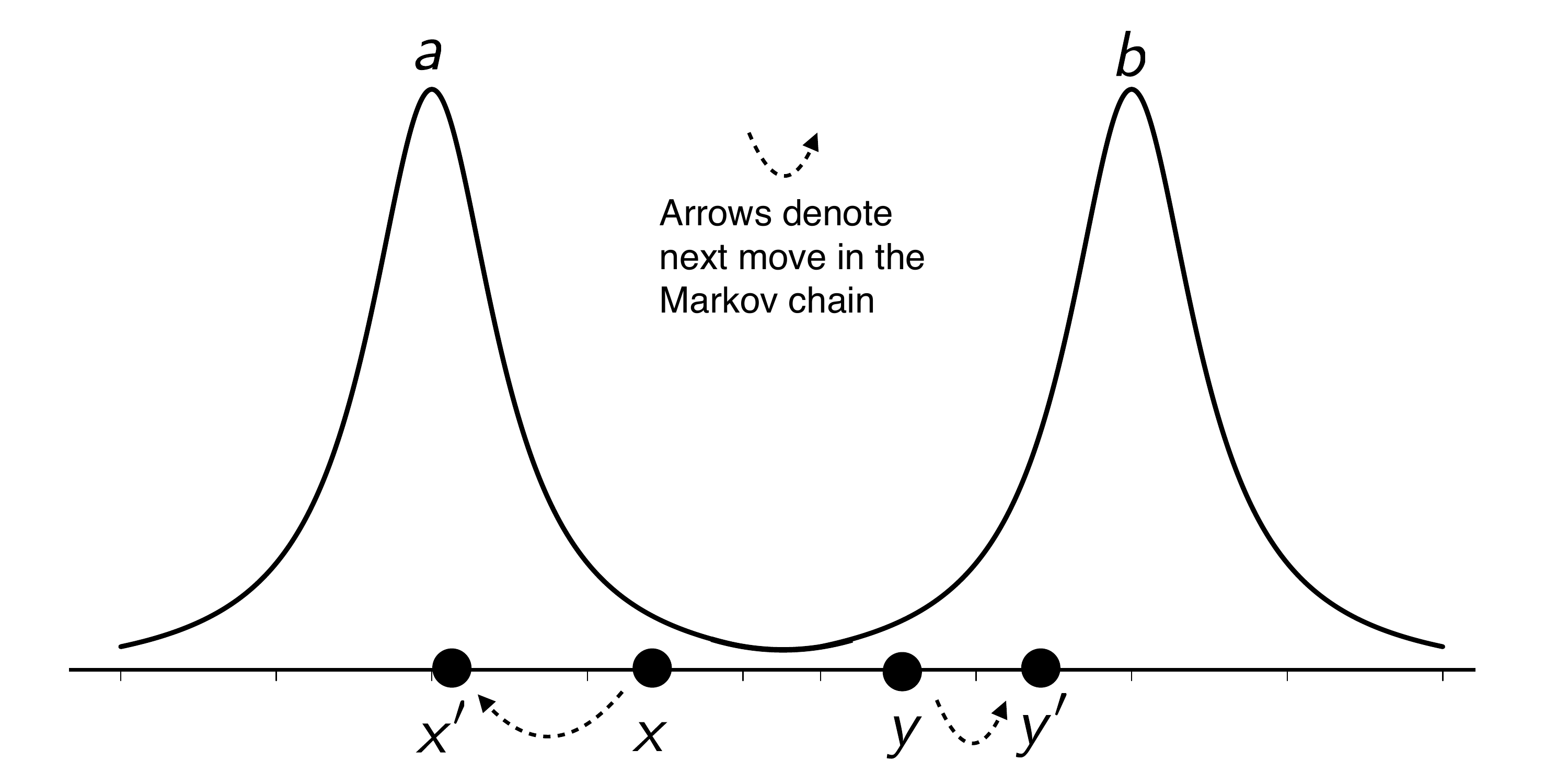}
    \caption{Sketch of negative curvature in a bimodal distribution. Two Markov chains at $x$ and $y$ move away from each other, on average, to higher density parts of the space.}
    \label{fig:BimodalNegativeCurvature}
\end{figure}

By contrast, unimodal distributions frequently have positive curvature, since points want to move closer to the mode, as we saw in Example~\ref{firstnormalex}. 

\section{Concentration Inequalities} \label{sec:concentration}

Now that we have introduced all the necessary ingredients, we review the concentration results of Joulin and Ollivier and apply them to the setting of Hamiltonian Monte Carlo. 

From \cite{Ollivier09}, we use the following definitions. These apply to general metric measure spaces and not specifically to HMC.

\begin{defn} \label{lipschitz} The \emph{Lipschitz norm} (or, more precisely, seminorm) of a function $f:(\mcX,\rho)\to\RR$ is
\[ \|f\|_{\Lip} := \sup_{x,y \in\RR^d } \frac{|f(x)-f(y)|}{\rho(x,y)}.\] If $\|f\|_{\Lip}\le C$, we say that $f$ is $C$-Lipschitz. \end{defn}

\begin{defn} The \emph{coarse diffusion constant} of a Markov chain on a metric space $(\mcX,\rho)$ with kernel $P$ at a state $q\in \mcX$ is the quantity \[ \sigma(q)^2 := \frac{1}{2} \iint_{\mcX\times \mcX} \rho(x,y)^{2} \, P_{q}(dx)\, P_{q}(dy).\] \end{defn}

The coarse diffusion constant controls the size of the steps at a point $q\in \mcX$.

\begin{defn} The \emph{local dimension} $n_q$ is \[ n_{q} := \inf_{\substack{f:\mcX\to\RR \\ \text{$f$ 1-Lipschitz}}} \frac{\iint_{\mcX\times\mcX} \rho(x,y)^{2} \, P_q(dx)\, P_q(dy)}{\iint_{\mcX\times\mcX} |f(x) - f(y)|^{2} \, P_q(dx)\, P_q(dy)}. \]
\end{defn}


\begin{defn} The \emph{granularity} $\sigma_\infty$ is \[\sigma_\infty=\frac{1}{2}\sup_{x\in\mcX} \diam P_x.\] \end{defn}

We record the values for these and other expressions that show up in the concentration inequalities in Table~\ref{locdimtable} on page~\pageref{locdimtable}, in the case of a multivariate Gaussian distribution.




We now state Joulin and Ollivier's error bound. We assume that the $x_i$'s are chosen by running a Markov chain (not necessarily HMC) with stationary distribution $\pi$ on a metric space $\mcX$, and that the coarse Ricci curvature $\kappa$ is \emph{positive}.



\begin{thm}[\cite{JO10}] \label{thm:ConcentrationInequality} 
Let \[V^2(\kappa,T) = \frac{1}{\kappa T}\left(1+\frac{T_0}{T}\right)\sup_{x\in\mcX} \frac{\sigma(x)^2}{n_x\kappa}.\] 
Then, assuming that $\sigma_\infty<\infty$, we have \begin{equation}\PP_x(|\widehat{I}-\EE_x\widehat{I}|\ge r\|f\|_{\Lip}) \le \begin{cases}  2e^{-r^2/(16V^2(\kappa,T))} & 0<r<\frac{4V^2(\kappa,T)\kappa T}{3\sigma_\infty}, \\ 2e^{-\kappa Tr/(12\sigma_\infty)} & r\ge\frac{4V^2(\kappa,T)\kappa T}{3\sigma_\infty}. \end{cases} \label{concineq} \end{equation} \end{thm}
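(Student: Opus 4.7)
The plan is to import the martingale concentration machinery of Joulin and Ollivier. Since positive coarse Ricci curvature $\kappa>0$ contracts the Markov operator on Lipschitz functions, the centered sum $\widehat{I}-\EE_x\widehat{I}$ can be rewritten as a martingale with controlled conditional variance and essentially bounded increments; a Freedman-type inequality then produces the two-regime bound.

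First I would establish the Lipschitz contraction: by Kantorovich--Rubinstein duality, $W_1(P_x,P_y) = \sup_{\|g\|_{\Lip}\le 1}|Pg(x)-Pg(y)|$, which combined with the definition of $\kappa$ gives $\|Pg\|_{\Lip}\le (1-\kappa)\|g\|_{\Lip}$ for every Lipschitz $g$. Iterating, $\|P^k f - \pi f\|_{\Lip}\le (1-\kappa)^k\|f\|_{\Lip}$, so for a $1$-Lipschitz $f$ the Poisson solution
\[ g \;:=\; \sum_{k=0}^\infty\bigl(P^k f - \pi f\bigr) \]
converges and satisfies $\|g\|_{\Lip}\le 1/\kappa$ together with the identity $(I-P)g = f-\pi f$. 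Setting $\mathcal{F}_i=\sigma(x_0,\dots,x_i)$ and $D_i := g(x_i)-Pg(x_{i-1})$, the sequence $(D_i)$ is a martingale difference with respect to $(\mathcal{F}_i)$, and writing $f(x_i)-\pi f = D_i + \bigl(Pg(x_{i-1})-g(x_{i-1})\bigr)$ then telescoping over $i=T_0+1,\dots,T$ yields
\[ \widehat{I}-\EE_x\widehat{I} \;=\; \frac{1}{T}\sum_{i=T_0+1}^{T} D_i \;+\; \text{boundary terms of order } \|g\|_{\Lip}/T. \]
The prefactor $(1+T_0/T)$ in $V^2(\kappa,T)$ is designed to absorb this boundary contribution.

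Next I would control the quadratic variation by applying the definition of the local dimension $n_{x_{i-1}}$ to the $\|g\|_{\Lip}$-Lipschitz function $g$, which gives
\[ \EE\bigl[D_i^2\mid\mathcal{F}_{i-1}\bigr] \;\le\; \|g\|_{\Lip}^2\cdot\frac{\sigma(x_{i-1})^2}{n_{x_{i-1}}} \;\le\; \frac{1}{\kappa^2}\sup_{x\in\mcX}\frac{\sigma(x)^2}{n_x}. \]
Summing and dividing by $T^2$ reproduces $V^2(\kappa,T)$ up to the burn-in factor. For almost-sure boundedness of the increments, the granularity bound $\diam P_x\le 2\sigma_\infty$ together with $\|g\|_{\Lip}\le 1/\kappa$ yields $|D_i|\le 2\sigma_\infty/\kappa$, so the rescaled increments $D_i/T$ are bounded by a multiple of $\sigma_\infty/(\kappa T)$.

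Finally, I would invoke Freedman's inequality for bounded martingale differences with predictable quadratic variation bounded by $V^2(\kappa,T)$ and increments bounded by the above sup-norm. This directly produces a Gaussian tail when $r$ is small compared to the ratio of variance to sup-norm, and an exponential tail otherwise, with the crossover occurring exactly at $r = 4V^2(\kappa,T)\kappa T/(3\sigma_\infty)$; rescaling by $\|f\|_{\Lip}$ removes the normalization. The main obstacle is the careful bookkeeping of constants ($1/16$, $1/12$, $4/3$) through the Bernstein--Freedman balance, together with verifying that the Poisson series for $g$ converges in a strong enough sense (not merely formally) to legitimize the telescoping decomposition when $f$ is only Lipschitz rather than bounded.
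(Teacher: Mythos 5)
The paper itself offers no proof of this statement: it is quoted directly from Joulin and Ollivier \cite{JO10}, so the relevant comparison is with their original argument. That argument is not a martingale argument at all; it is a Chernoff/Laplace-transform estimate. One first proves a local bound on $P_x\bigl(e^{\lambda g}\bigr)$ for Lipschitz $g$, valid for $\lambda$ up to order $1/\sigma_\infty$, with variance term $\sigma(x)^2/n_x$, and then propagates it through the $T$ steps using the contraction $\|Pg\|_{\Lip}\le(1-\kappa)\|g\|_{\Lip}$; optimizing over $\lambda$ in the two regimes is what produces the constants $16$, $12$ and the crossover at $r=4V^2(\kappa,T)\kappa T/(3\sigma_\infty)$. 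Your route (Poisson equation, martingale differences, Freedman) is a genuinely different and in principle workable strategy, and you place the three ingredients correctly: $\|g\|_{\Lip}\le 1/\kappa$ from the contraction, the conditional variance via the local dimension, and the increment bound via $\sigma_\infty$; Freedman's constants are in fact stronger than the stated ones, so constant bookkeeping is not the obstacle.

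The genuine gap is in the decomposition and centering step. From $(I-P)g=f-\pi f$ the correct identity is $f(x_i)-\pi f = D_i + \bigl(Pg(x_{i-1})-Pg(x_i)\bigr)$, not $D_i+\bigl(Pg(x_{i-1})-g(x_{i-1})\bigr)$, and after telescoping the boundary contribution to $\widehat{I}$ is $\tfrac{1}{T}\bigl(Pg(x_{T_0})-Pg(x_T)\bigr)$, which is of size $\|g\|_{\Lip}\,\rho(x_{T_0},x_T)/T$, not $\|g\|_{\Lip}/T$. Since $g$ is only Lipschitz and the state space here is noncompact ($\RR^d$ with the Jacobi metric), this term is unbounded, and it cannot be ``absorbed'' by the factor $(1+T_0/T)$, which in \cite{JO10} arises from the burn-in normalization, not from boundary terms. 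In addition, your sum is centered at $\pi f$ whereas the theorem is centered at $\EE_x\widehat{I}$; the gap between the two centerings is a bias term that Joulin--Ollivier deliberately exclude from this statement (it is controlled separately, via eccentricity/drift estimates). The standard repair is to run Freedman on the Doob martingale $M_i=\EE[\widehat{I}\mid\mathcal{F}_i]$, whose increments are $\tfrac{1}{T}\bigl(g_{s}(x_i)-Pg_{s}(x_{i-1})\bigr)$ with the finite-horizon Poisson functions $g_s=\sum_{k=0}^{s}P^k f$ (horizon $s$ depending on $i$); these satisfy $\|g_s\|_{\Lip}\le\|f\|_{\Lip}/\kappa$, so your variance and increment bounds apply verbatim, the martingale terminates exactly at $\widehat{I}-\EE_x\widehat{I}$, and no boundary term appears. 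With that fix the argument closes and yields a bound of the stated Bernstein form (indeed with better constants), by a route different from the one in \cite{JO10}.
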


\begin{rem} The appearance of $\kappa$ in these expressions has an elegant interpretation. As we run a Markov chain, the samples drawn are not independent. The curvature $\kappa$ can be thought of as a measure of the deviation from independence, so that $1/\kappa$ samples drawn from running the chain substitute for one independent sample. \end{rem}

In order to use Theorem~\ref{thm:ConcentrationInequality} in the case of HMC, we must say something about the symbols that appear in~(\ref{concineq}), and what they mean in our context. The only ones that pose any serious difficulties are $\|f\|_{\Lip}$ and $\sigma_\infty$, both of which ought to be computed in the Jacobi metric. However, we have a different Jacobi metric for each total energy $h$, and the only requirement on $h$ is that it be at least as large as $V$. In the case of the Lipschitz norm, this would suggest that we define the Lipschitz norm for HMC to be the supremum of the Lipschitz quotient over all pairs of points and $h\ge V$. This approach will not be successful, however, as it would require division by $h-V$, which can be made arbitrarily small.

Instead, we make use of high dimensionality, and recall that the momentum is distributed according to $\mcN(0,I_d)$, and that $K=\frac{1}{2}\|p\|^2$, so that the distance $\rho_h(x,y)$ between two very close points $x$ and $y$ in the Jacobi metric $g_h$ is $\|p\|\|x-y\|+O(\|x-y\|^2)$. Hence the Lipschitz quotient in the Jacobi metric between two nearby points is \[\frac{|f(x)-f(y)|}{\rho_h(x,y)}=\frac{|f(x)-f(y)|}{\|p\|\|x-y\|+O(\|x-y\|^2)},\] or the standard Lipschitz norm divided by $\|p\|$, up to higher order terms. The random variable $\|p\|^2$ has a $\chi^2$ distribution. Using tail bounds from[Lemma 1, p.\ 1325]~\cite{LM00} (which will also be used several times in~\S\ref{sec:GaussianExample}), \[\PP\left(\Big|\|p\|^2-d\Big|\ge d^{3/4}\right)=O\left(e^{-c\sqrt{d}}\right)\] for a suitable constant $c>0$. Assuming we stay away from this exceptional set, we have $\|p\|^2\approx d$, and the Lipschitz norm of a function $f$ with respect to the Jacobi metric is the Euclidean Lipschitz norm multiplied by $\frac{1}{\sqrt{d}+O(d^{1/4})}$. For sufficiently large $d$, away from the exceptional set, we take the Lipschitz norm with respect to the Jacobi metric to be $\frac{2}{\sqrt{d}}$ times the Euclidean Lipschitz norm. Here and elsewhere, we have taken a \emph{probabilistic} approach to computing the expressions that show up in Theorem~\ref{thm:ConcentrationInequality}. In this case, we estimate the Lipschitz norm by seeing what the Lipschitz quotient looks like for ``typical'' pairs of points.


Similarly, for $\sigma_\infty$, we interpret the diameter of $P_x$ to mean the farthest distance between two ``typical'' points in the support of $P_x$, where the distance is calculated in the Jacobi metric, interpreted as in the previous paragraph. In the next section, we will continue in this spirit.

Intuitively, the reason this approach is appropriate is that events that occur with such vanishingly small probability that they are unlikely to occur at any step in the walk cannot meaningfully impact the mixing time of the chain. More precisely, in our case, we will be running HMC for polynomially many (in $d$) steps, and the exceptional events occur with probability $O(e^{-c\sqrt{d}})$ for some constant $c>0$. Hence, for $d$ sufficiently large, it is extremely unlikely that we will ever observe an exceptional event throughout the HMC walk. Another essentially equivalent approach is to use a coupling argument to couple HMC with a modified version that truncates the distribution on initial momenta so as to disallow initial momenta for which $K(q,p)$ is of an anomalous magnitude. This approach would lead to the same results as we obtain below.

\section{Examples} \label{sec:examples}

Here we show how curvature can quantify running time $T$ in three examples: the multivariate Gaussian distribution, the multivariate $t$ distribution, and Bayesian image registration. In the Gaussian case, we are able to prove that there is a computable positive number $\kappa_{d,\Lambda}$, depending on the dimension of the space and the covariance matrix, so that with very high probability, the coarse Ricci curvature is at least $\kappa_{d,\Lambda}$; furthermore, we explicitly bound the probability of the exceptional set. In the other two cases, it is not possible to work analytically, so instead we take an empirical approach. In all our cases, we observe that there is a high concentration of positive (and nearly constant) curvature. We give evidence that empirical curvature is an interesting diagnostic tool to assess the convergence of HMC in practice.

In the following, we will show how to choose the starting point $x_0$ so that our analysis starts with a chain close to stationarity (so that only a small $T_0$ is required). For instance, we will start the chain at the mode for our Gaussian and t distribution examples. For our Bayesian image registration example, we will start the chain at the mode of the prior distribution. In the Bayesian setting, $\pi$ will always be absolutely continuous with respect to the prior.
Finding the right Bayesian model and $\pi_0$ for image registration is the focus of \cite{SRH14}. In the terminology of Lov\'asz~\cite{Lovasz99}, we are using a \emph{warm start}.

The question of how long to run the Markov chain (quantifying $T$) is important  in practice as every step comes at considerable computational cost, both in terms of memory storage and  runtime. For instance, for a real three dimensional medical imaging problem with 100 subjects, we need around 100 MB per HMC step. This can quickly exhaust the storage capabilities of a computing cluster. 

\subsection{Multivariate Gaussian Distribution} \label{sec:GaussianExample}

In this section, we estimate the coarse Ricci curvature of HMC with Gaussian target distribution in high dimensions, with some mild hypotheses on the covariance matrix, in order to obtain error bounds from the Joulin-Ollivier machinery. An overview of our approach is as follows: first, we show that the sectional curvature of Jacobi metrics coming from Gaussian target distributions are positive and concentrated around their means in high dimensions. These spaces equipped with Jacobi metrics are close to spheres, and we bound the coarse Ricci curvature for certain Markov chains on spheres. Finally, we show that the coarse Ricci curvature on spheres is close to that of HMC with Gaussian target distribution.

\begin{figure}
    \centering
    \includegraphics[width=0.5\textwidth]{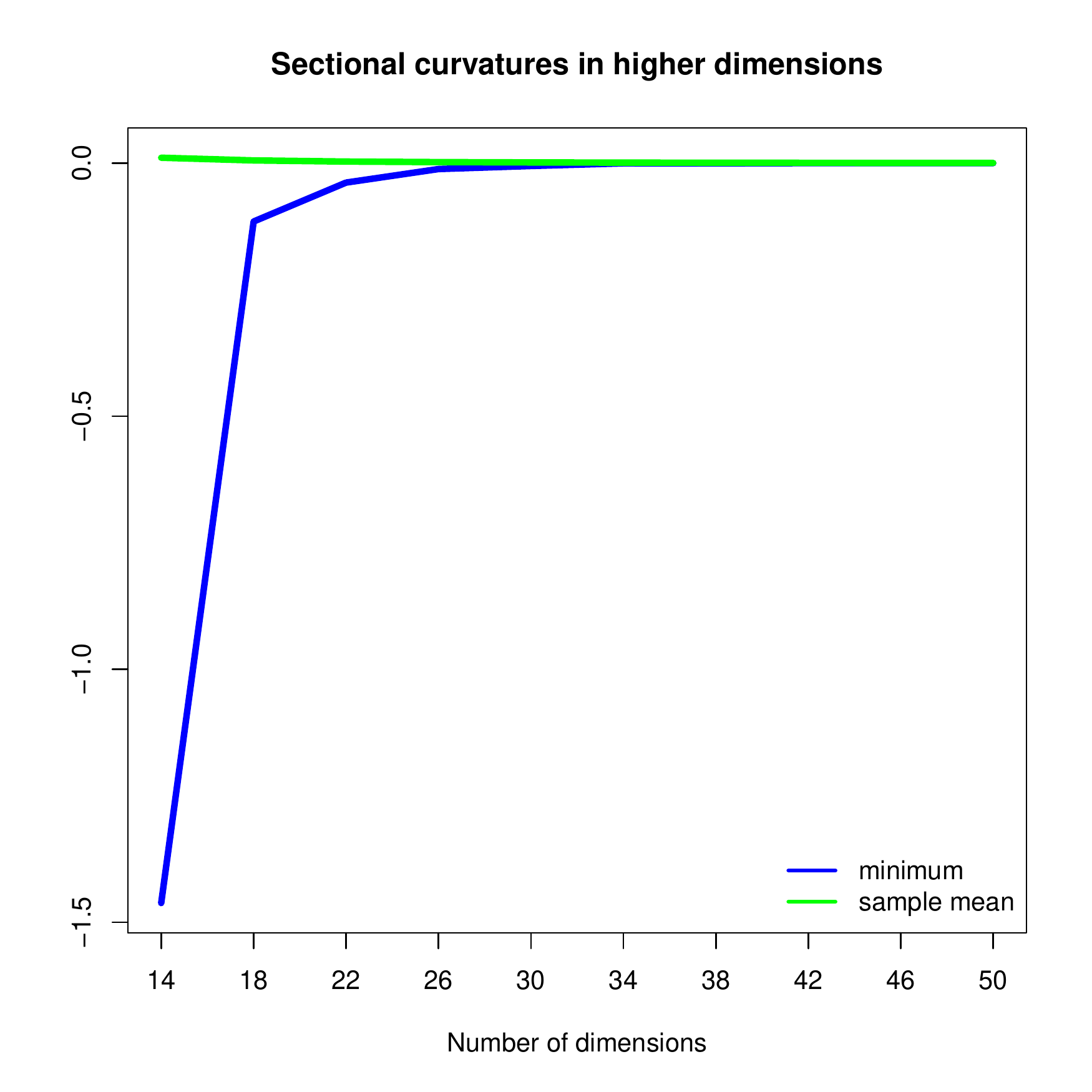}
    \caption{(Identity covariance structure) Minimum and sample average of sectional curvatures for $14,18,\dots,50$-dimensional multivariate Gaussian $\pi$ with identity covariance. For each dimension, we run a HMC random walk with $T=10^4$ steps, and at each step, we compute sectional curvatures for $1000$ uniformly sampled orthonormal 2-frames in $T_q\X$ (see Remark~\ref{rem:stiefel}).}
    \label{fig:CurvatureEvolution}
\end{figure}
\begin{figure}
    \centering
    \includegraphics[width=0.32\textwidth]{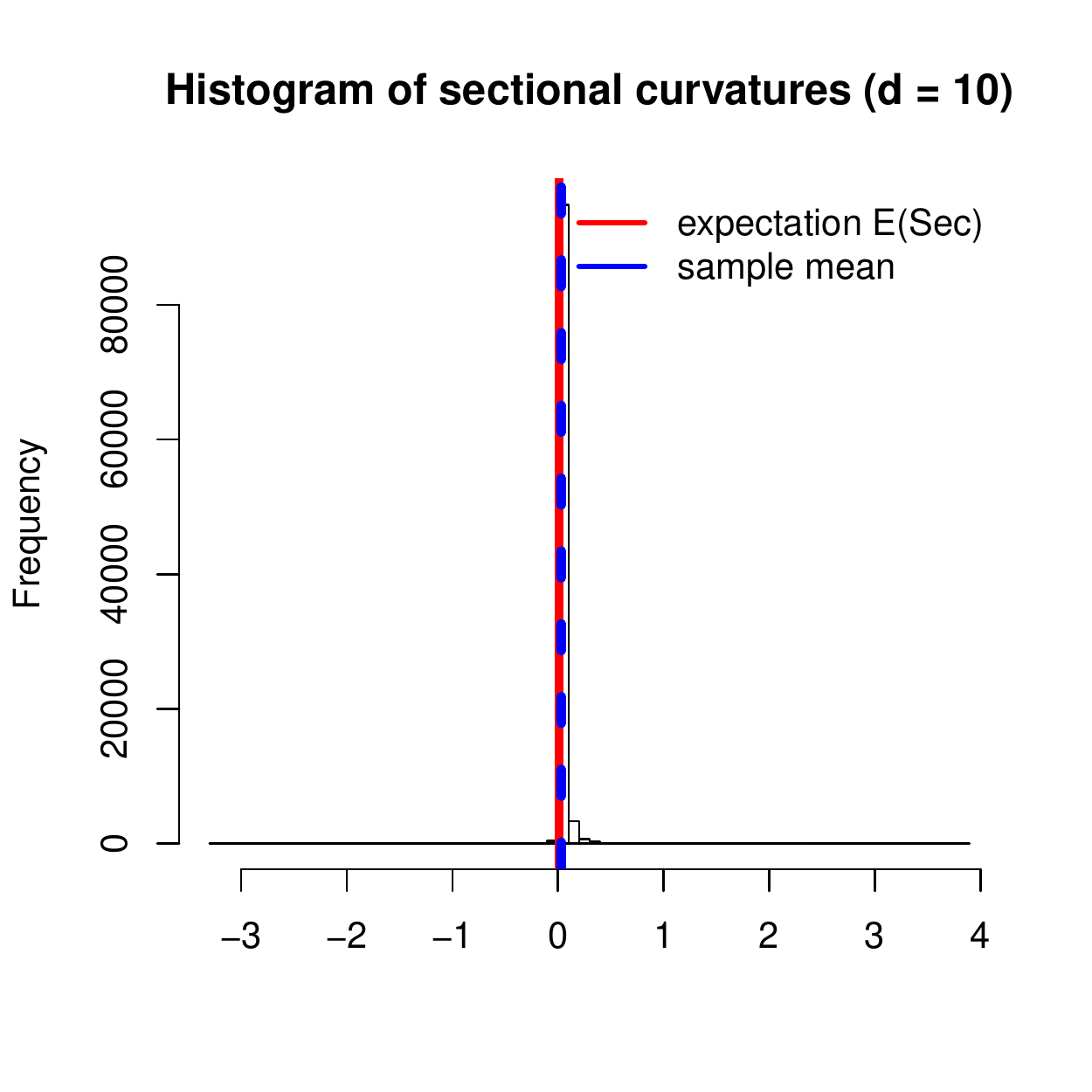}
    \includegraphics[width=0.32\textwidth]{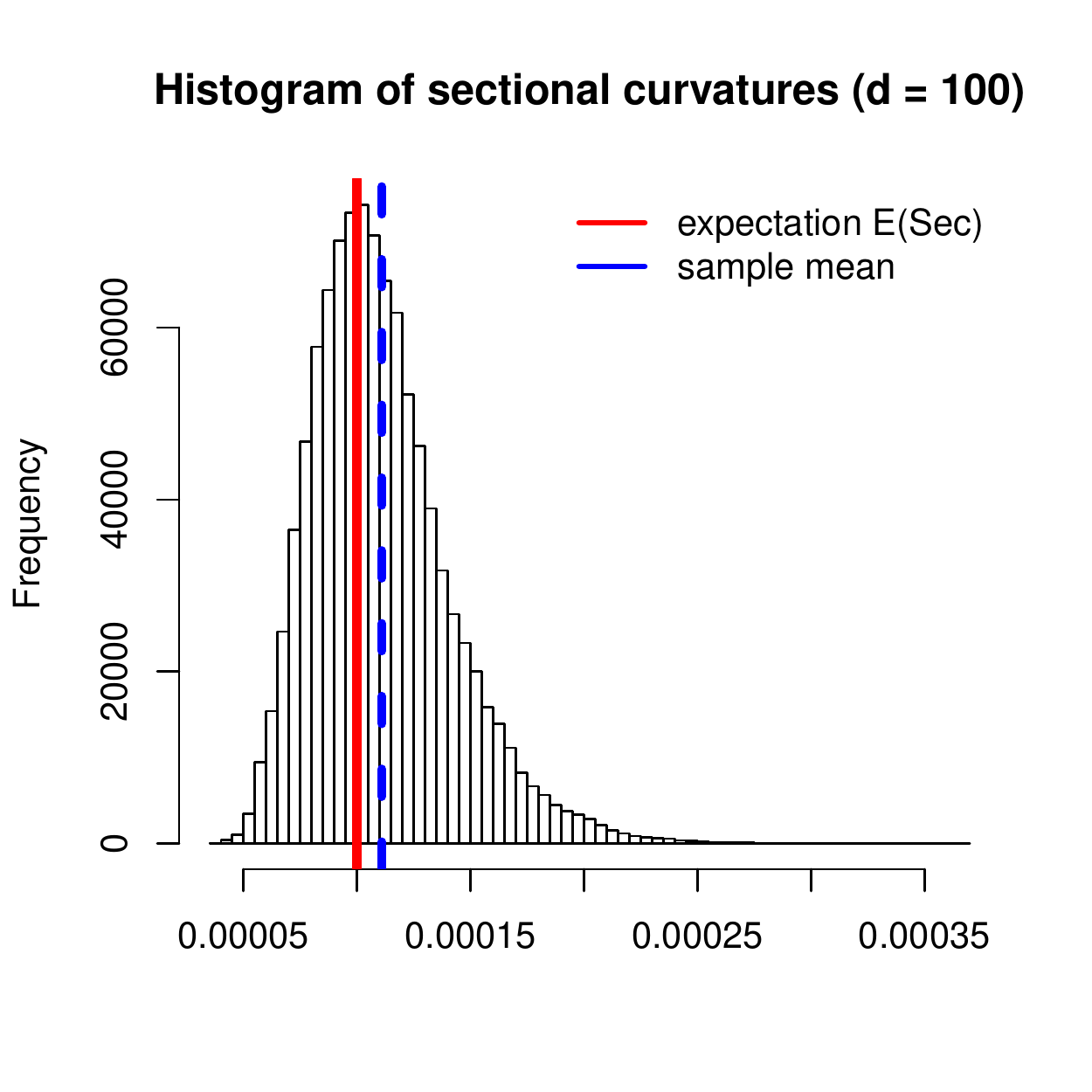}
    \includegraphics[width=0.32\textwidth]{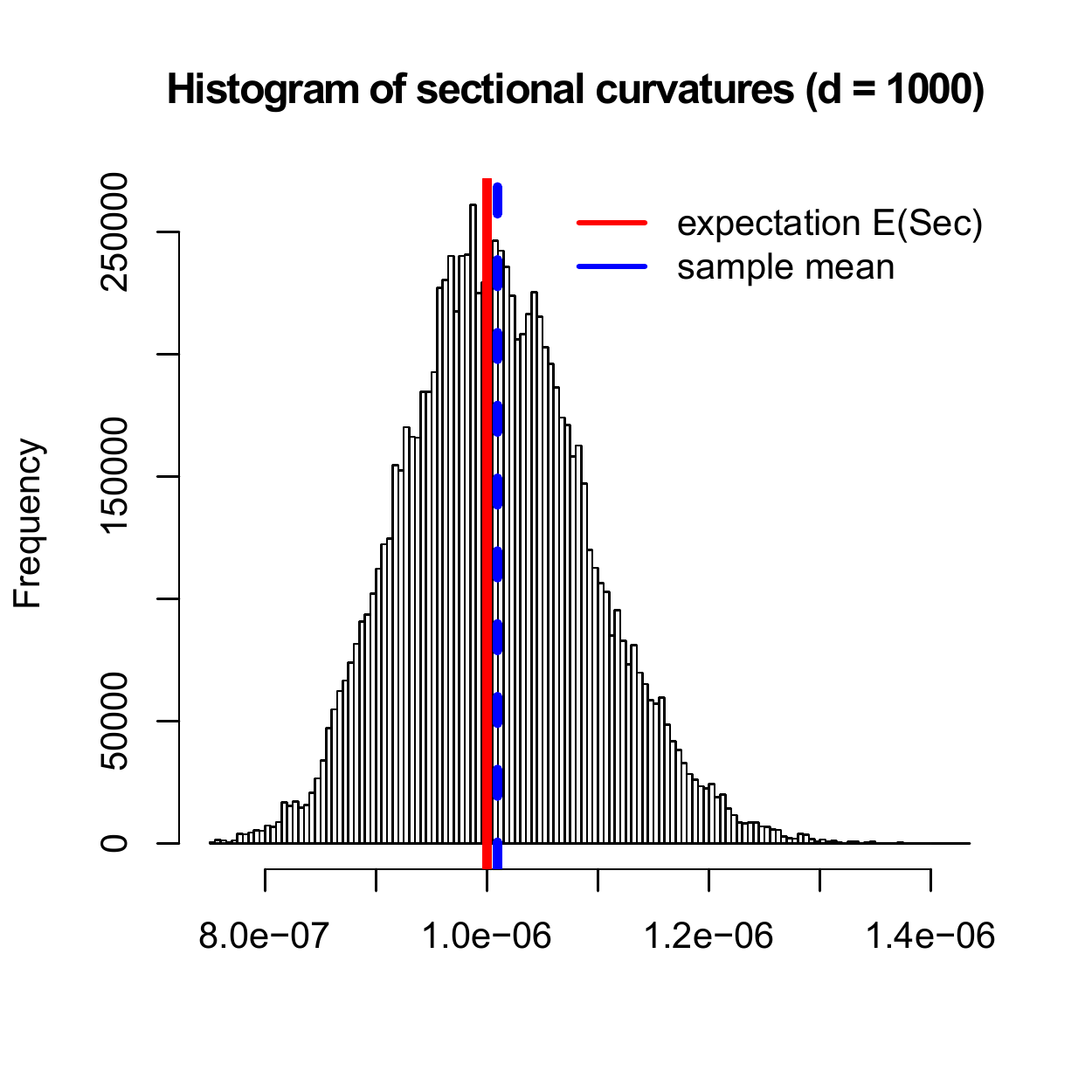}
    \caption{(Identity covariance structure) HMC after $T=10^4$ steps for multivariate Gaussian $\pi$ with identity covariance in $d=10,100,1000$ dimensions. At each step we compute the sectional curvature for $d$ uniformly sampled orthonormal 2-frames in $T_q\X$ (see Remark \ref{rem:stiefel}).}
    \label{fig:IdentityCovarianceDimensions}
\end{figure}
The first step is to apply Theorem \ref{thm:ConcentrationInequality} to the case of multivariate Gaussian distributions, running an HMC Markov chain to sample from $\pi=\mcN(0,\Sigma)$. We prove that, with high probability, the lower bound on sectional curvature is positive.
Empirically, we can see in Figure~\ref{fig:IdentityCovarianceDimensions} that the distribution of sectional curvatures approaches a Gaussian, which does imply that sectional curvatures are
in fact positive. Furthermore, Figure~\ref{fig:CurvatureEvolution} shows that the minimum and mean sectional curvatures during the HMC random walk tend closer with increasing dimensionality and meet visually at around 30 dimensions. More formally, we now show that sectional curvature is indeed positive with high probability in high dimensions; furthermore, the proof shows that, with high probability, the sectional curvature is very close to the mean. 

\begin{lem} \label{lem:SecCurvature}
Let $C\ge 1$ be a universal constant and $\pi$ be the $d$-dimensional multivariate Gaussian $\mcN(0,\Sigma)$, where $\Sigma$ is a $(d \times d)$ covariance matrix, all of whose eigenvalues lie in the range $[1/C, C]$. We denote by $\Lambda=\Sigma^{-1}$  the precision matrix. Let $q$ be distributed according to $\pi$, and $p$ according to a Gaussian $\mcN(0,I_d)$. Further, $h = V(q) + K(q,p)$ is the sum of the potential and the kinetic energy. The Euclidean state space $\mcX$ is equipped with the Jacobi metric $g_h$. Pick two orthonormal tangent vectors $u,v$ in the tangent space  $T_q\mcX$ at point $q$. Then the sectional curvature $\Sec$ from expression (\ref{secformula}) is a random variable bounded from below with probability
\[\Prob(d^2 \Sec \ge K_1) \ge 1 - K_2 e^{-K_3 \sqrt{d}}.\]
$K_1$, $K_2$, and $K_3$ are positive constants that depend only on $C$.
\end{lem}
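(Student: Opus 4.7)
The plan is to substitute the Gaussian-specific quantities into the sectional curvature formula (\ref{secformula}) and then control each piece by high-dimensional Gaussian concentration. Since $V(q)=\tfrac12 q^T\Lambda q$, we have $\grad V=\Lambda q$, $\Hess V=\Lambda$, and $h-V=K=\tfrac12\|p\|^2$. Substituting gives
\[
\Sec(u,v)\;=\;\frac{1}{\|p\|^6}\Bigl(\|p\|^2\bigl[u^T\Lambda u+v^T\Lambda v\bigr]+3\bigl[\langle\Lambda q,u\rangle^2+\langle\Lambda q,v\rangle^2\bigr]-\|\Lambda q\|^2\Bigr).
\]
The numerator splits into a positive Hessian contribution of order $d$, a nonnegative cosine contribution, and a negative $-\|\grad V\|^2$ contribution which is also of order $d$; the denominator is of order $d^3$, which explains the $d^2$ normalization in the statement.

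I would then invoke two standard Gaussian concentration bounds. The Laurent--Massart lemma cited in \S\ref{sec:concentration} gives $\bigl|\|p\|^2-d\bigr|\le d^{3/4}$ with probability $1-2e^{-c\sqrt d}$. A Hanson--Wright type bound applied to the quadratic form $\|\Lambda q\|^2=z^T\Lambda z$, where $z=\Sigma^{-1/2}q\sim\mcN(0,I_d)$, $\|\Lambda\|_{\mathrm{op}}\le C$, and $\|\Lambda\|_F^2\le dC^2$, yields $\bigl|\|\Lambda q\|^2-\Trace(\Lambda)\bigr|\le Cd^{3/4}$ with probability $1-2e^{-c\sqrt d/C^2}$. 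A union bound packages both events into a good event of probability at least $1-K_2 e^{-K_3\sqrt d}$.

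On this good event I would carry out term-by-term bounds using only the eigenvalue hypothesis $\lambda_i(\Lambda)\in[1/C,C]$. For any orthonormal pair $(u,v)$ one has $u^T\Lambda u+v^T\Lambda v\ge 2\lambda_{\min}(\Lambda)=2/C$. Discarding the nonnegative cosine term from the lower bound and using $\|\Lambda q\|^2\le dC+O(d^{3/4})$ together with $\|p\|^2\in[d-d^{3/4},d+d^{3/4}]$, after multiplying by $d^2$ I would obtain
\[
d^2 \Sec(u,v)\;\ge\;\frac{2}{C}-C+o(1),
\]
and then take $K_1:=\tfrac12(2/C-C)$ with $K_2,K_3$ absorbing the union-bound constants.

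The main obstacle is that this worst-case bound is positive only when $C<\sqrt 2$: if $u,v$ happen to lie in the smallest-eigenvalue subspace of $\Lambda$ while most of $\Lambda$'s eigenvalues sit near $C$, the $-\|\grad V\|^2$ term can dominate the Hessian contribution. Recovering a positive $K_1$ for a larger range of $C$ would require exploiting the coupling between these worst cases: whenever $u^T\Lambda u+v^T\Lambda v$ is close to $2/C$ the vectors $u,v$ must concentrate in the small-eigenvalue subspace of $\Lambda$, and consequently the cosine projections $\langle\Lambda q,u\rangle^2+\langle\Lambda q,v\rangle^2$ pick up extra mass that partially cancels the $-\|\Lambda q\|^2$ term. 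Quantifying this cancellation by a case analysis on the spectrum of $\Lambda$ is the step I expect to be most delicate.
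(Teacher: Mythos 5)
Your setup (substituting $\grad V=\Lambda q$, $\Hess V=\Lambda$, $2(h-V)=\|p\|^2$ into (\ref{secformula}), dropping the nonnegative cosine terms, and concentrating $\|p\|^2$ and $\|\Lambda q\|^2=z\tp\Lambda z$ via $\chi^2$-type tail bounds) matches the paper's proof up to the decisive step, but there is a genuine gap in how you handle $u$ and $v$, and you have correctly diagnosed that your bound $d^2\Sec\ge 2/C-C+o(1)$ is vacuous for $C\ge\sqrt2$. The rescue you propose cannot work: for $u,v$ adversarially placed in the small-eigenvalue subspace, the cosine terms are $3\bigl(\langle\Lambda q,u\rangle^2+\langle\Lambda q,v\rangle^2\bigr)$, and $\langle\Lambda q,u\rangle=\langle\Lambda^{1/2}z,u\rangle$ is a one-dimensional Gaussian projection with variance $u\tp\Lambda u\le C$; so these terms are $O(1)$ with high probability, while the negative term $\|\Lambda q\|^2\approx\Trace(\Lambda)$ is $\Theta(d)$. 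No case analysis on the spectrum recovers positivity from an $O(1)$ correction against a $\Theta(d)$ deficit, and indeed for worst-case frames the conclusion is simply false when the spectrum is spread out.

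The paper's resolution is that $u$ and $v$ are not worst-case: they are a \emph{uniformly random} orthonormal $2$-frame (see Remark~\ref{rem:stiefel} and the phrase ``is a random variable'' in the statement — the probability is over $q$, $p$, \emph{and} the frame). The proof writes $u=x/\|x\|$, $v=y/\|y\|$ with $x,y$ standard Gaussian, so that $u\tp\Lambda u=x\tp\Lambda x/\|x\|^2$ concentrates around the \emph{average} eigenvalue $\mu=\Trace(\Lambda)/d\in[1/C,C]$ rather than being bounded below by $\lambda_{\min}=1/C$. Since $z\tp\Lambda z$ concentrates around the same $\mu d$, the lower bound becomes
\[
d^2\Sec\;\ge\;\frac{2(\mu d-t)d^2}{(d+t)^3}-\frac{(\mu d+t)d^2}{(d-t)^3}\;=\;\mu-O(\varepsilon)\;\ge\;\frac{1}{C}-O(\varepsilon)
\]
with $t=\varepsilon d$, which is positive for every $C\ge1$ once $\varepsilon$ is small enough. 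So the fix is not a finer worst-case analysis but recognizing the additional source of randomness in the frame; with that change your concentration estimates go through essentially as written.
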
 

\begin{rem} \label{rem:stiefel}
To sample a 2-dimensional orthonormal frame in $\RR^d$, we can sample from the Stiefel manifold of orthonormal 2-frames as follows: Fill a matrix $A \in \RR^{d\times2}$ with i.i.d.\ normals $\mathcal{N}(0,1)$. Compute the QR factorization of $A$. Then, $Q$ of the QR factorization is a uniformly drawn sample from the Stiefel manifold.
\end{rem}

\begin{proof}[Proof of Lemma~\ref{lem:SecCurvature}]
We recall the expression (\ref{secformula}) for the sectional curvature from the previous section. We easily compute some of the expressions showing up in (\ref{secformula}): \[ \operatorname{Hess} V = \left( \frac{\partial^{2} V}{\partial q_{i} \partial q_{j}} \right) = \Lambda,\qquad \grad V = \left(\frac{\partial V}{\partial q_{1}}, \dots, \frac{\partial V}{\partial q_{n}} \right)^{\tp} = \Lambda q. \]

Note that $2K=\|p\|^2$, where $\| \cdot \|$ is the Euclidean norm. Substituting this expression into the expression~
(\ref{secformula}) gives the full definition of the random variable
\[\Sec =\frac{u\tp \Lambda u}{\|p\|^4} +\frac{v\tp \Lambda v}{\|p\|^4} +\frac{3\|\Lambda q\|^2\cos^2\theta}{\|p\|^6} + \frac{3\|\Lambda q\|^2\cos^2\beta}{\|p\|^6} - \frac{\|\Lambda q\|^2}{\|p\|^6}.\] 

Note that $q$ may be written as $\Lambda^{-1/2} z$, where $z$ is a standard Gaussian vector $z \sim \mcN(0,I_d)$. Therefore
\[ \|\Lambda q\|^2 = z\tp\Lambda z. \]
Next, note that $u$ and $v$ may be written as $u = x/\|x\|$ and $v=y/\|y\|$ where $x$ and $y$ are standard Gaussian vectors (but not independent of each other). 
The terms involving cosines can be left out since they are always positive and small. The other three terms can be written as three quadratic forms in standard Gaussian random vectors $x,y,z$ ($x$ and $y$ are not independent of each other), so that we have
\[ \Sec \ge \frac{x\tp \Lambda x}{\|p\|^4 \|x\|^2} + \frac{y\tp \Lambda y}{\|p\|^4 \|y\|^2} - \frac{z\tp \Lambda z}{\|p\|^6}. \]
We now calculate tail inequalities for all these terms using Chernoff-type bounds.
Let $\lambda_1,\dots,\lambda_d$ be the eigenvalues of $\Lambda$, repeated by multiplicities. By assumption, they are bounded between $1/C$ and $C$, where $C \ge 1$. Let $w_1,\dots,w_d$ be a corresponding set of orthonormal eigenvectors. Let $a_i = x\tp w_i$, so that
\[ x\tp\Lambda x = \sum_{i=1}^d \lambda_i a_i^2. \]
Note that $a_1,\dots,a_d$ are i.i.d.\ $\mcN(0,1)$ random variables. Therefore
\[ \EE(x\tp\Lambda x) = \sum_{i=1}^d \lambda_i = \Trace(\Lambda). \]
Let $S = x\tp \Lambda x - \EE(x\tp\Lambda x)$. Then for any $\theta\in(-1/C,1/C)$,
\[ \EE(e^{\frac{1}{2}\theta S}) = \prod_{i=1}^d \EE(e^{\frac{1}{2}\theta\lambda_i(a_i^2-1)}) = e^{-\frac{1}{2}\theta \sum\lambda_i} \prod_{i=1}^d (1-\theta\lambda_i)^{-1/2}. \]
Now note that if $b\in[0,1/2]$, then $\log(1-b) \ge -b-b^2$. This shows that if we choose $\theta=(2\sum\lambda_i^2)^{-1/2}$, then
\[ \EE(e^{\frac{1}{2}\theta S}) \le e^{\frac{1}{4}}. \]
Thus, for any $t \ge 0$, using Markov's inequality and by assumption $\lambda_i \le C$,
\[ \Prob(S \ge t) \le e^{-\frac{1}{2}\theta t} \EE(e^{\frac{1}{2}\theta S}) \le e^{\frac{1}{4}}e^{-\frac{1}{4}t(\sum\lambda_i^2)^{-1/2}} \le e^{\frac{1}{4}}e^{-\frac{t}{4C\sqrt{d}}}. \]
Similarly the same bound holds for the lower tail, as well as for $y\tp\Lambda y - \EE(y\tp\Lambda y)$ and $z\tp\Lambda z - \EE(z\tp\Lambda z)$, since these random variables have the same distribution as $x\tp\Lambda x - \EE(x\tp\Lambda x)$. By a similar calculation, 
\[ \Prob\left(\big| \|x\|^2 - \EE\|x\|^2 \big| \ge t \right) \le 2e^{\frac{1}{4}}e^{-\frac{t}{4\sqrt{d}}}, \]
and the same bound holds for $\|p\|^2$ and $\|y\|^2$. Note that $\EE\|x\|^2=d$. Let $\mu := \sum\lambda_i/d$. 
Note that $\mu\in[1/C,C]$.
Take some $t$ small enough and let $B$ be the event that all of the following events happen: $|x\tp\Lambda x-\mu d|\le t$, $|y\tp\Lambda y-\mu d|\le t$, $|z\tp\Lambda z-\mu d|\le t$, $|\|x\|^2-d|\le t$, $|\|y\|^2-d|\le t$ and $|\|p\|^2-d|\le t$. By the above calculations, 
\[ \Prob(B) \ge 1-8e^{1/4}e^{-\frac{t}{4C\sqrt{d}}}. \]
If $B$ happens, and $t$ is small enough, then
\[ \Sec \ge \frac{2(\mu d-t)}{(d+t)^3} - \frac{\mu d+t}{(d-t)^3}. \] 
Choose $t=\varepsilon d$ for some sufficiently small constant $\varepsilon$ (depending on $C$), the above inequalities show that
\[ \Prob(d^2\Sec \ge K_1) \ge 1 - K_2 e^{-K_3 \sqrt{d}}, \]
where $K_1$, $K_2$ and $K_3$ are positive constants that depend only on $C$.
\end{proof}

Lemma~\ref{lem:SecCurvature} tells us that sectional curvatures at the \emph{beginnings} of the HMC steps are nearly constant. However, it will also be necessary to show that they tend to remain nearly constant along the HMC steps, at least if we do not move too far on each step. To do this, we show that the values of $\|p\|$ and $\|q\|$ do not change much over the course of an HMC trajectory. 


\begin{lem}
\label{lem:pstable}
Let $C,d,\Sigma,\Lambda$ be as in Lemma~\ref{lem:SecCurvature}. Let $q(0)$ be distributed according to $\pi=\mcN(0,\Sigma)$, and let $p(0)$ be distributed according to $\mcN(0,I_d)$. Let $q(t)$ and $p(t)$ be the trajectory of HMC with initial position $q(0)$ and initial momentum $p(0)$. Then we have \[\PP\left(\sup_{t\in[0,d^{-1/2}]}\Big|\|p(t)\|^2-d\Big|\ge K_4 d^{3/4}\right)\le K_5 e^{-K_6\sqrt{d}}\] for some positive constants $K_4$, $K_5$, and $K_6$, which depend only on $C$.
\end{lem}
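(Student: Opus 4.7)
The plan is to exploit the linearity of Hamilton's equations in the Gaussian case. With $V(q)=\tfrac{1}{2} q\tp\Lambda q$ and $K(p)=\tfrac{1}{2}\|p\|^2$, the equations reduce to $\ddot q = -\Lambda q$, a decoupled system of harmonic oscillators in the eigenbasis of $\Lambda$. Writing $M := \Lambda^{1/2}$, so that $M$, $\sin(tM)$, and $\cos(tM)$ mutually commute, the closed-form solution is
\[ q(t) = \cos(tM)\, q(0) + M^{-1}\sin(tM)\, p(0), \qquad p(t) = -M\sin(tM)\, q(0) + \cos(tM)\, p(0). \]

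Next, introduce the independent standard Gaussian vectors $z := Mq(0)\sim\mcN(0,I_d)$ (since $M\Sigma M = \Lambda^{1/2}\Lambda^{-1}\Lambda^{1/2} = I_d$) and $w := p(0)\sim\mcN(0,I_d)$. Expanding $p(t)\tp p(t)$ and invoking $2\sin(tM)\cos(tM) = \sin(2tM)$ together with $\cos^2(tM) = I_d - \sin^2(tM)$ yields the key identity
\[ \|p(t)\|^2 - d = (\|w\|^2 - d) + z\tp\sin^2(tM)z - w\tp\sin^2(tM)w - z\tp\sin(2tM)w. \]
This cleanly separates the ``main'' random fluctuation $\|w\|^2 - d$ from three correction terms that each carry an explicit factor of $\sin^2(tM)$ or $\sin(2tM)$.

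Because the eigenvalues of $M$ lie in $[1/\sqrt C,\sqrt C]$ and $|\sin x|\le|x|$, for $t\in[0,d^{-1/2}]$ one has the uniform operator-norm bounds $\|\sin^2(tM)\|_{\mathrm{op}} \le C/d$ and $\|\sin(2tM)\|_{\mathrm{op}} \le 2\sqrt{C/d}$. These furnish the pathwise, $t$-free deterministic estimates
\[ |z\tp\sin^2(tM)z|,\ |w\tp\sin^2(tM)w| \le (C/d)\max(\|z\|^2,\|w\|^2), \qquad |z\tp\sin(2tM)w| \le 2\sqrt{C/d}\,\|z\|\|w\|. \]
Applying the Laurent--Massart chi-squared tail inequality (Lemma 1 of \cite{LM00}) to $\|z\|^2$ and $\|w\|^2$ --- exactly as in the proof of Lemma~\ref{lem:SecCurvature}, with the threshold tuned to $d^{3/4}$ --- gives $\max(|\|z\|^2-d|, |\|w\|^2-d|)\le d^{3/4}$ with probability at least $1 - K_5 e^{-K_6\sqrt d}$. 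On this event, $\|z\|^2$ and $\|w\|^2$ are both bounded by $2d$, so the three correction terms have sizes $O(1)$, $O(1)$, and $O(\sqrt d)$ respectively, each dominated by the $O(d^{3/4})$ main term.

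Putting the pieces together gives $\sup_{t\in[0,d^{-1/2}]}|\|p(t)\|^2-d|\le K_4 d^{3/4}$ on the high-probability event, for a constant $K_4$ depending only on $C$. The principal conceptual step --- and the reason the supremum over $t$ comes essentially for free --- is that the harmonic-oscillator structure sequesters all $t$-dependence inside purely deterministic operator norms, leaving only a single static chi-squared concentration event to handle, with no chaining or discretisation of $[0,d^{-1/2}]$ required. The only minor technical obstacle is the conversion between the polynomial-in-$d$ deviation threshold $d^{3/4}$ and the stretched-exponential tail $e^{-c\sqrt d}$, which is routine given the explicit Chernoff computation already carried out for Lemma~\ref{lem:SecCurvature}.
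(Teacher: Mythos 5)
Your proof is correct, and it takes a genuinely different route from the paper's. The paper works coordinate-by-coordinate in the eigenbasis of $\Sigma$: it bounds each $|p_i(t)|$ and $|p_i'(t)|$ via per-coordinate Gaussian tail events, controls the derivative $X'(t)=\frac{d}{dt}\|p(t)\|^2$ (invoking a Lindeberg CLT approximation to get a tail bound on the sum of the $X_i'(t)$), and then integrates $X'$ over $[0,d^{-1/2}]$ to conclude that $\|p(t)\|^2$ cannot drift by more than $O(d^{3/4})$. You instead exploit exact solvability: writing the flow in closed form with $M=\Lambda^{1/2}$, you obtain the algebraic identity
\[
\|p(t)\|^2-d=(\|w\|^2-d)+z\tp\sin^2(tM)z-w\tp\sin^2(tM)w-z\tp\sin(2tM)w,
\]
and then the $t$-dependence is absorbed into deterministic operator-norm bounds ($\|\sin^2(tM)\|\le C/d$, $\|\sin(2tM)\|\le 2\sqrt{C/d}$ uniformly on $[0,d^{-1/2}]$), so a single pair of chi-squared concentration events for $\|z\|^2$ and $\|w\|^2$ finishes the job and the supremum over $t$ is free. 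Your version buys rigor and economy: it avoids the union bound over $d$ coordinate events and, more importantly, the paper's appeal to the CLT, which only gives approximate normality rather than a quantitative tail at the stated $e^{-c\sqrt d}$ scale; your constants are also explicit in $C$. What the paper's derivative-plus-integration argument buys in exchange is flexibility: it does not rely on having a closed-form solution of the Hamiltonian flow, so it suggests how one might handle perturbations of the quadratic potential, whereas your identity is specific to the Gaussian (harmonic oscillator) case — which is all the lemma requires.
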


\begin{proof}
Since $\|p(0)\|^2$ has a $\chi^2_d$ distribution, by~\cite[Lemma~1, p.~1325]{LM00}, we have \[\PP\left(\Big|\|p(0)\|^2-d\Big|\ge d^{3/4}\right)\le 2e^{-K_7\sqrt{d}}\] for some positive constant $K_7$. Now, choose an orthonormal basis for $\RR^d$ that makes $\Sigma$ diagonal, with diagonal entries $\lambda_1,\ldots,\lambda_d$. With respect to this basis, write $p(t)=(p_1(t),\ldots,p_d(t))$. Solving the Hamiltonian equations~(\ref{hamilton}) gives \[p_i(t)=p_i(0)\lambda_i^{-1/2}\cos(\lambda_i^{-1/2}t)-q_i(0)\lambda_i^{-1/2}\sin(\lambda_i^{-1/2}t).\] Write $a_i$ and $b_i$ for $q_i(0)$ and $p_i(0)$, respectively; the vectors $(a_1,\ldots,a_d)$ and $(b_1,\ldots,b_d)$ are independent. By the tail bounds for the univariate normal distribution, we have \[\PP(|a_i|\le\sqrt{\lambda_i}d^{1/4}, |b_i|\le d^{1/4})\ge 1-4e^{-\sqrt{d}/2}.\] When this happens, \[|p_i'(t)|\le d^{1/4}\lambda^{1/2}\lambda^{-1}+d^{1/4}\lambda^{-1}\le 2Cd^{1/4}.\] Since $\EE(a_i)=\EE(b_i)=0$, we have $\EE(p_i'(t))=0$.

Let $X_i(t)=p_i(t)^2$. We have $X_i'(t)=2p_i(t)p_i'(t)$, so \[|X_i'(t)|=2|p_i(t)|\ |p_i'(t)|\le 2d^{1/4}\times 2Cd^{1/4}=4Cd^{1/2}\] for all $t$, with probability at least $1-6e^{-K_8\sqrt{d}}$ for some constant $K_8$. Furthermore, in this high-probability region, $\EE(X_i'(t))=0$ and $\Var(X_i'(t))\le 16C^2d$.

Let $X(t)=\|p(t)\|^2$. We have $X'(t)=\sum_{i=1}^d X_i'(t)$. Since the $X_i'(t)$'s are independent, we may apply the Lindeberg Central Limit Theorem~\cite{Lindeberg22} so that $X'(t)$ is approximately normal with $\EE(X'(t))=0$ and $\Var(X'(t))\le 16C^2d^2$, when we are in the high-probability region for each $i$, which occurs with probability at least $1-6de^{-K_8\sqrt{d}}$. When this happens, we have \[|X'(t)|<4Cd\times d^{1/4}\] with probability at least $1-2e^{K_9\sqrt{d}}$.

Finally, we have \[X(t)=X(0)+\int_0^t X'(w)\,dw,\] so \[|X(t)-X_0|\le \int_0^t |X'(w)|\, dw\le 4Cd^{5/4}t\le 4Cd^{3/4}\] when we are in the high-probability region. The result follows.
\end{proof}

Thus the distribution of $p$ does not change too much along a single HMC trajectory. Since $q'(t)=p(t)$, the distribution of $q$ also does not change much along a trajectory. Hence the result of Lemma~\ref{lem:SecCurvature} holds if $q$ and $p$ are measured after time $t\in[0,d^{-1/2}]$ rather than just at time 0, the only difference being that the values of the positive constants $K_1$, $K_2$, and $K_3$ are changed.

\begin{rem} 
\label{Rem:MeanCurvature}  
We can easily see from the proof of Lemmas~\ref{lem:SecCurvature} and~\ref{lem:pstable} that the expectation of the sectional curvature is bounded from below by $\EE(\Sec) \ge \frac{\Trace(\Lambda)}{d^3}$.
Furthermore, note that, for instance in Lemma~\ref{lem:SecCurvature}, \[ K_1 = \frac{2(\mu-\varepsilon)}{(\varepsilon+1)^3} - \frac{(\mu+\varepsilon)}{(1-\varepsilon)^3} = \mu - O(\varepsilon), \]
which yields $\Prob( \Sec \ge \frac{\Trace(\Lambda)}{d^3} - O(\varepsilon) ) \ge 1 - K_2 e^{-K_3 \sqrt{d}}$. Therefore, asymptotically, as $d\to\infty$, the sectional curvature is very close to its expected value, as the probability of deviating from it decreases like $K_2e^{-K_3\sqrt{d}}$. 
Since we are interested in truly high dimensional problems, we will work in this asymptotic regime and assume that $\Sec \ge \frac{\Trace(\Lambda)}{(1+\delta)d^3}$ for a suitable $\delta>0$. 
\end{rem}


\begin{table} 
\label{constants} 
\begin{tabular}{lll} 
\textbf{Name} & \textbf{Symbol} & \textbf{Approximate value} \\ 
\hline \\[-0.3cm] 
Coarse Ricci curvature & $\kappa$ & $\displaystyle \frac{\Trace(\Lambda)}{3 d^2}$ \\ 
Coarse diffusion constant & $\sigma(q)^2$ & $d$ \\ Local dimension & $n_q$ & $d$ \\[0.1cm] 
\hline \\ 
\end{tabular} \caption{A table of some expressions that show up in the concentration inequality, together with their approximate values for an identity Gaussian kernel walk on $\RR^d$ with $\pi$ a multivariate Gaussian distribution with precision matrix $\Lambda$.} 
\label{locdimtable} 
\end{table}

Manifolds with constant positive sectional curvature are well-understood: the only such manifolds are spheres and their quotients by discrete groups of isometries. For topological reasons, HMC in high dimensions most closely resembles a Markov chain on a sphere; we will use this intuition for inspiration.


In high dimensions, the Gaussian $\mcN(0,I_d)$, the distribution of momenta, is very close to the uniform distribution on the sphere of radius $\sqrt{d}$, and by the previous results, a trajectory in the Jacobi metric is very close to being a geodesic on the sphere. Thus, we now compute the Wasserstein distance between spherical kernels centered at two nearby points on $\SS^d$. 
Following that, the next step is to estimate Wasserstein distances of Gaussian kernels on a sphere, 
and then analyze how these Wasserstein distances deviate from Wasserstein distances of Gaussian kernels on HMC manifolds.

We make use of the fact that, if $x$ and $y$ are two points on $\SS^d$, then the spherical distance between $x$ and $y$ is $\cos^{-1}(x\cdot y)$.


\begin{lem}
\label{lem:Wasserstein-sphere}
Let $\SS^d$ denote the unit $d$-dimensional sphere. Let $x$ and $y$ be two points on $\SS^d$ of distance $\eps>0$. Let $P_x(r)$ and $P_y(r)$ denote the uniform measure on the set of points of $\SS^d$ of spherical distance $r$ from $x$ and $y$, respectively. Then \[W_1(P_x(r),P_y(r))\le\eps\left(1-\frac{\sin^2(r)}{2}\frac{d-1}{d}+O(\sin^4(r))\right).\]
\end{lem}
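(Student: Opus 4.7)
The plan is to upper-bound $W_1(P_x(r), P_y(r))$ by exhibiting an explicit coupling between the two measures and estimating the expected distance under it. The natural candidate is the parallel-transport coupling: for each unit tangent vector $w \in T_x\SS^d$, let $w' \in T_y\SS^d$ denote its parallel transport along the unique length-minimizing geodesic from $x$ to $y$, and pair $\exp_x(rw)$ with $\exp_y(rw')$. Because parallel transport is a linear isometry, it pushes the uniform measure on the unit sphere of $T_x\SS^d$ forward to the corresponding uniform measure on $T_y\SS^d$, so the marginals of this coupling are indeed $P_x(r)$ and $P_y(r)$. Hence
\[W_1(P_x(r),P_y(r)) \le \EE\bigl[\rho(\exp_x(rw),\exp_y(rw'))\bigr],\]
where the expectation is taken over $w$ uniform on the unit sphere of $T_x\SS^d$.

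Next I would carry out the distance computation in coordinates. Embed $\SS^d \subset \RR^{d+1}$ with $x = e_1$ and $y = (\cos\eps)\,e_1 + (\sin\eps)\,e_2$, and write a unit tangent vector at $x$ as $w = (0, w_1, w_2, \dots, w_d)$ with $\sum w_i^2 = 1$, so that $w_1$ is the component of $w$ along the great circle joining $x$ to $y$. Use the spherical exponential map $\exp_p(tu) = (\cos t)p + (\sin t)u$, and the fact that parallel transport along this great circle sends $e_2 \mapsto -(\sin\eps)\,e_1 + (\cos\eps)\,e_2$ and fixes $e_3,\dots,e_{d+1}$. A direct calculation, in which the cross terms involving $w_1\sin\eps\sin r\cos r$ cancel, gives
\[\exp_x(rw)\cdot\exp_y(rw') = \cos\eps + \sin^2(r)(1-\cos\eps)(1 - w_1^2).\]
The spherical distance is the $\arccos$ of this quantity. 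With $B := \cos^2 r + w_1^2\sin^2 r = 1 - \sin^2(r)(1-w_1^2)$ and the expansion $1-\cos\eps = \tfrac{\eps^2}{2} + O(\eps^4)$, one obtains
\[\rho(\exp_x(rw),\exp_y(rw')) = \arccos\!\bigl(1-\tfrac{\eps^2}{2}B + O(\eps^4)\bigr) = \eps\sqrt{B} + O(\eps^3).\]

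Finally I would average over $w$. Since $w$ is uniform on $S^{d-1}$, symmetry gives $\EE[w_1^2] = 1/d$, and the concavity of $\sqrt{\cdot}$ (Jensen's inequality) yields
\[\EE\!\left[\sqrt{\cos^2 r + w_1^2 \sin^2 r}\,\right] \le \sqrt{\cos^2 r + \tfrac{\sin^2 r}{d}} = \sqrt{1 - \tfrac{d-1}{d}\sin^2 r}.\]
Applying $\sqrt{1-z} = 1 - z/2 + O(z^2)$ with $z = \tfrac{d-1}{d}\sin^2 r$ (noting $(d-1)/d \le 1$, so the $O(z^2)$ error is $O(\sin^4 r)$ uniformly in $d$) produces the stated bound.

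The main obstacle is not conceptual but a matter of bookkeeping: two independent asymptotic expansions have to be tracked — one in $\eps$ coming from the $\arccos$, the other in $\sin^2 r$ coming from $\sqrt{1-z}$ — and the inner-product calculation, while mechanical, is the step where a silent sign error would invalidate the bound. I would verify that computation against two sanity checks: (i) $w_1 = \pm 1$, where the coupled points lie on the same great circle and should be at distance exactly $\eps$ (the formula gives $A=\cos\eps$, confirming this); and (ii) $r = \pi/2$ with $w_1 = 0$, where both coupled points coincide and the distance should vanish (the formula gives $A = 1$, confirming this).
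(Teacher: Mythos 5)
Your proof is correct and is essentially the paper's argument: your parallel-transport coupling is exactly the paper's transference plan (rotation by $\eps$ in the plane of the great circle through $x$ and $y$), and your averaging step $\EE[w_1^2]=1/d$ is the same computation as the paper's $\int\sin^{d}\phi_1\,d\phi_1\big/\int\sin^{d-2}\phi_1\,d\phi_1=\frac{d-1}{d}$. The only real difference is in bounding the distance between coupled points: the paper uses the length of the circular arc traced by the rotation, which is exactly $\eps\sqrt{1-\sin^2(r)\sin^2(\phi_1)}$ and valid for every $\eps$, whereas your exact inner-product formula followed by the $\arccos$ expansion leaves an extra $O(\eps^3)$ term not present in the stated bound --- harmless for the $\eps\to 0$ curvature application, and removable by noting $\arccos\bigl(1-B(1-\cos\eps)\bigr)\le\eps\sqrt{B}$ since $(1-\cos s)/s^2$ is decreasing.
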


\begin{proof}
By rotating the sphere if necessary, we may assume that $x=(1,0,\ldots,0)$ and $y=(\cos\eps,\sin\eps,0,\ldots,0)$. Next, we pick a point $x'$ of distance $r$ from $x$; a typical such point has the form \[x'=(\cos(r),\sin(r)\cos(\phi_1),\sin(r)\sin(\phi_1)\cos(\phi_2),\ldots,\sin(r)\sin(\phi_1)\cdots\sin(\phi_{d-1})).\] In order to obtain an upper bound for the Wasserstein distance between the spherical kernels centered at $x$ and $y$, we must devise a transference plan, sending $x'$ to some $y'$ in the spherical kernel centered at $y$. We do not know the optimal transference plan, but one good method sends $x'$ to $y'$, where \[y'=\begin{pmatrix}\cos\eps & -\sin\eps & 0 & \cdots & 0 \\ \sin\eps & \cos\eps & 0 & \cdots & 0 \\ 0 & 0 & 1 & \cdots & 0 \\ \vdots & \vdots & 0 & \ddots & 0 \\ 0 & 0 & 0 & \cdots & 1 \end{pmatrix} (x'){\tp}.\] In short, rotate the first two coordinates by $\eps$, and leave the other coordinates unchanged.


Computing using the spherical distance formula is challenging, so instead, we imagine moving from $x'$ to $y'$ along a parallel, i.e.\ the path \[\gamma(t)=\begin{pmatrix}\cos t& -\sin t & 0 & \cdots & 0 \\ \sin t & \cos t & 0 & \cdots & 0 \\ 0 & 0 & 1 & \cdots & 0 \\ \vdots & \vdots & 0 & \cdots & 0 \\ 0 & 0 & 0 & \cdots & 1 \end{pmatrix} (x')\tp,\qquad 0\le t\le\eps.\] This is a longer path than is obtained by traveling along a great circle, so it only gives us an upper bound on the Wasserstein distance. The length of $\gamma(t)$ is \[L(\gamma)=\eps\sqrt{1-\sin^2(r)\sin^2(\phi_1)}=\eps\left(1-\frac{\sin^2(r)\sin^2(\phi_1)}{2}+O(\sin^4(r))\right).\] The Wasserstein distance between $x$ and $y$ with spherical kernels of radius $r$ is then at most \[\frac{\int_0^\pi \eps\left(1-\frac{\sin^2(r)\sin^2(\phi_1)}{2}+O(\sin^4(r)\right)\sin^{d-2}(\phi_1)\, d\phi_1}{\int_0^\pi \sin^{d-2}(\phi_1)\, d\phi_1} = \eps\left(1-\frac{\sin^2(r)}{2}\frac{d-1}{d}+O(\sin^4(r))\right).\]
\end{proof}

From here, we can compute the coarse Ricci curvature:

\begin{cor} \label{cor:gaussric} The coarse Ricci curvature $\kappa(x,y)$ with respect to the kernels $P_x(r)$ and $P_r(y)$ is \[\kappa(x,y)=1-\frac{\rho(P_x(r),P_y(r))}{\eps}\ge \frac{\sin^2(r)}{2}\frac{d-1}{d}+O(\sin^4(r)).\]
\end{cor}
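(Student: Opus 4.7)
The plan is to obtain this corollary as an immediate consequence of Lemma~\ref{lem:Wasserstein-sphere}, together with the definition of coarse Ricci curvature. Recall that for a Markov kernel $P$ on a metric space $(\mcX,\rho_{\mcX})$, the coarse Ricci curvature between two points $x,y$ is defined by
\[
W_{1}(P_{x},P_{y}) = (1-\kappa(x,y))\,\rho_{\mcX}(x,y),
\]
so that $\kappa(x,y) = 1 - W_{1}(P_{x},P_{y})/\rho_{\mcX}(x,y)$. In our setting $\mcX=\SS^{d}$ is equipped with the spherical (geodesic) distance, and by hypothesis $\rho_{\mcX}(x,y)=\eps$. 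Note that the $\rho$ appearing in the corollary statement is shorthand for $W_{1}$.

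First, I would simply invoke Lemma~\ref{lem:Wasserstein-sphere} to bound the numerator: it gives the upper bound
\[
W_{1}(P_{x}(r),P_{y}(r)) \le \eps\!\left(1 - \tfrac{\sin^{2}(r)}{2}\tfrac{d-1}{d} + O(\sin^{4}(r))\right).
\]
Dividing by $\eps$ and subtracting from $1$ yields
\[
\kappa(x,y) = 1 - \frac{W_{1}(P_{x}(r),P_{y}(r))}{\eps} \ge \tfrac{\sin^{2}(r)}{2}\tfrac{d-1}{d} + O(\sin^{4}(r)),
\]
which is precisely the claimed inequality. No genuine obstacle arises, since all the real work, namely constructing the transference plan (rotation of the first two coordinates by $\eps$) and integrating the length of the parallel path against the $\sin^{d-2}(\phi_{1})$ measure on the sphere, was already carried out in the lemma.

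The only thing worth being careful about is the direction of the inequality: because Lemma~\ref{lem:Wasserstein-sphere} provides an \emph{upper} bound on $W_{1}$ (obtained from a specific, not necessarily optimal, coupling via parallels rather than great circles), the resulting estimate on $\kappa(x,y)$ is automatically a \emph{lower} bound, which is exactly what is required for subsequent use in the Joulin--Ollivier concentration inequality of Theorem~\ref{thm:ConcentrationInequality}. I would close the proof with one sentence highlighting this sign-tracking, since it is conceptually the one place a reader might hesitate.
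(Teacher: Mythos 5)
Your proposal is correct and is exactly the paper's route: the corollary is an immediate consequence of Lemma~\ref{lem:Wasserstein-sphere} via the defining relation $W_1(P_x,P_y)=(1-\kappa(x,y))\rho(x,y)$, with the upper bound on $W_1$ translating into the stated lower bound on $\kappa(x,y)$. Your remark on tracking the direction of the inequality (suboptimal coupling gives an upper bound on $W_1$, hence a lower bound on curvature) is precisely the point the paper relies on.
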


In Lemma~\ref{lem:Wasserstein-sphere} and Corollary~\ref{cor:gaussric}, we compute the Wasserstein distance and coarse Ricci curvature with respect to a spherical kernel. Note that the Wasserstein distance increases, and the coarse Ricci curvature decreases, as $r$ decreases. Ultimately, we need to compute Wasserstein distance with respect to a Gaussian kernel. By tail bounds on the $\chi^2$ distribution given in~\cite{LM00}, we have, for instance, \[\PP\left(\Large|\|p\|^2-d\Large|\ge 4d^{3/4}\right)\le 2e^{-d^{1/2}}.\] If $d\gg 0$, then with very high probability, we do not encounter any $p$ with $\Large|\|p\|-d\Large|>4d^{3/4}$, so these momenta do not impact the running of the chain. From now on, we assume that any $p$ we encounter has $\Large|\|p\|-d\Large|\le 4d^{3/4}$.

Working in this asymptotic regime, we assume that the HMC for the Gaussian is a random walk on a $d$-dimensional sphere with sectional curvature $\frac{\Trace(\Lambda)}{d^3}$ and hence radius $\frac{d^{3/2}}{\Trace(\Lambda)^{1/2}}$. A typical trajectory of HMC moves at speed $\|p(0)\|\approx\sqrt{d}$ \emph{in Euclidean space} for time $\frac{1}{\sqrt{d}}$ and hence moves a distance of roughly 1 in Euclidean space. The Jacobi metric increases distances by a factor of $\sqrt{2(h-V)}=\sqrt{K}=(1+O(d^{-1/4}))\sqrt{d}$, so an HMC trajectory looks like a geodesic on the sphere of radius $\frac{d^{3/2}}{\Trace(\Lambda)^{1/2}}$ of length $(1+O(d^{-1/4}))\sqrt{d}$. After scaling the sphere to have unit radius, the geodesic then has length $(1+O(d^{-1/4}))\frac{\Trace(\Lambda)^{1/2}}{d}$. Thus, in the above analysis, we take $r$ to be $(1+O(d^{-1/4}))\frac{\Trace(\Lambda)^{1/2}}{d}=O(d^{-1/2})$ and use the approximation $\sin(r)\approx r$. Absorbing the $O(d^{-1/4})$ into the denominator, we have shown the following:

\begin{cor} \label{cor:crcspheregauss} The coarse Ricci curvature of a random walk on $\SS^d$ with a Gaussian kernel with covariance matrix $\frac{\Trace(\Lambda)}{d^{1/2}}I_d$ is bounded below by \[\kappa(x,y)\ge\frac{3\Trace(\Lambda)}{8d^2}+O(d^{-2}).\] \end{cor}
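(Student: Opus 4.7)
The plan is to reduce to Corollary~\ref{cor:gaussric} by conditioning on the norm of the Gaussian displacement and then integrating. From a point $x\in\SS^d$, a Gaussian kernel with covariance $\sigma^2 I_d$ on the tangent space $T_x\SS^d$ produces a displacement $v$ whose norm $\|v\|$ is $\sigma$ times a $\chi_d$ random variable; by the Laurent--Massart tail bound used repeatedly above, $\|v\|^2$ concentrates around $\sigma^2 d$ with deviations of relative order $d^{-1/4}$. Matching the typical step size $\sigma\sqrt{d}$ to the HMC step length $\Trace(\Lambda)^{1/2}/d$ on the unit sphere identified in the preceding paragraph fixes the scale of $\sigma$.

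The main computation is a coupling argument analogous to the proof of Lemma~\ref{lem:Wasserstein-sphere}. First condition on $\|v\|=r$, so that each kernel becomes the spherical kernel $P_x(r)$; the same transference plan (parallel transport along the geodesic from $x$ to $y$, followed by rotation in the plane spanned by $x$ and $y$) yields
\[W_1(P_x(r),P_y(r)) \le \eps\left(1-\frac{\sin^2(r)}{2}\cdot\frac{d-1}{d}+O(\sin^4(r))\right).\]
Integrating against the conditional law of $\|v\|$ and using $\sin^2(r)=r^2-O(r^4)$ on the almost-sure event that $\|v\|=O(d^{-1/2})$ gives
\[\kappa(x,y) \ge \frac{\EE[\|v\|^2]}{2}\cdot\frac{d-1}{d}-O\bigl(\EE[\|v\|^4]\bigr).\]
The $\chi_d$ moments together with the choice of $\sigma$ yield $\EE[\|v\|^2]=\Trace(\Lambda)/d^2+O(d^{-5/2})$ and $\EE[\|v\|^4]=O(\Trace(\Lambda)^2/d^4)$, producing a leading term of $\Trace(\Lambda)/(2d^2)$ with smaller-order corrections.

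The main obstacle is not any single step but the bookkeeping of three independent sources of approximation: the Jacobi-scaling factor $\sqrt{2(h-V)}$ equals $\sqrt{d}(1+O(d^{-1/4}))$ by Lemma~\ref{lem:pstable}; the Gaussian tail concentration leaves an exceptional set of probability $O(e^{-c\sqrt{d}})$; and the Taylor expansion of $\sin^2$ contributes $O(r^4)$ corrections. Reporting the constant $3/8$ in place of the formal leading coefficient $1/2$ provides enough slack to absorb all of these into a single $O(d^{-2})$ term; any constant strictly less than $1/2$ would suffice, and $3/8$ is chosen purely for convenience when plugging the bound into Theorem~\ref{thm:ConcentrationInequality}.
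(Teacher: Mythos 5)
Your proposal is correct and follows essentially the same route as the paper: reduce the Gaussian kernel to the spherical kernel of Lemma~\ref{lem:Wasserstein-sphere} via concentration of the step length, match the radius to the HMC step scale $(1+O(d^{-1/4}))\Trace(\Lambda)^{1/2}/d$ on the unit sphere, expand $\sin^2(r)\approx r^2$ in Corollary~\ref{cor:gaussric}, and absorb the $O(d^{-1/4})$ corrections by reporting $3/8$ in place of $1/2$. Your explicit conditioning on $\|v\|$ and integration over the $\chi_d$ law just makes rigorous what the paper does informally by treating the Gaussian step as concentrated on a shell of the typical radius.
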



Now, we wish to compare the Wasserstein distance of Gaussian kernels on a sphere to the Wasserstein distance of Gaussian kernels on HMC manifold with the Jacobi metric. 

Pick two nearby point $x,y\in\RR^d$, and let $h$ be such that $|h-V(x)-d|<d^{3/4}$. Write $\mcX$ for $\RR^d$ equipped with the Jacobi metric $g_h$. Let $\gamma:[0,\eps]\to\mcX$ be a geodesic with $\gamma(0)=x$ and $\gamma(\eps)=y$. Let $u\in T_x\mcX$ be a tangent vector, short enough so that there are no conjugate points on the geodesic $\exp_x(tu)$ for $t\in[-2,2]$. For $t\in[0,\eps]$, let $u(t)$ be the parallel transport of $u$ along $\gamma$, so that $u(t)\in T_{\gamma(t)}\mcX$. Let $\omega_s(t)=\exp_{\gamma(t)}(su(t))$. (See Figure~\ref{ptgamma} for a diagram.) Now, we make a similar construction on the $d$-dimensional sphere $S$ with radius $\frac{9\Trace(\Lambda)}{8d^3}$: let $\widetilde{x},\widetilde{y}\in S$ be two points of geodesic distance $\eps$ along a geodesic $\widetilde{\gamma}$. Let $\widetilde{u}\in T_{\widetilde{x}}S$ be a tangent vector with $\|\widetilde{u}\|=\|u\|$ and $\langle \widetilde{u},\widetilde{\gamma}'(0)\rangle=\langle u,\gamma'(0)\rangle$. Let $\widetilde{u}(t)$ be the parallel transport of $\widetilde{u}$ along $\widetilde{\gamma}$, so that $\widetilde{u}(t)\in T_{\widetilde{\gamma}(t)}S$, and let $\widetilde{\omega}_s(t)=\exp_{\widetilde{\gamma}(t)}(s\widetilde{u}(t))$. We wish to compare the lengths $L_s$ and $\widetilde{L}_s$ of $\omega_s(t)$ and $\widetilde{\omega}_s(t)$, where $t$ ranges over $[0,\eps]$. Observe that the sectional curvatures of $\mcX$ are greater than those of $S$, so we are in the setting of classical comparison theorems in differential geometry, in particular the Rauch comparison theorem, which we use in the proof of the theorem below.

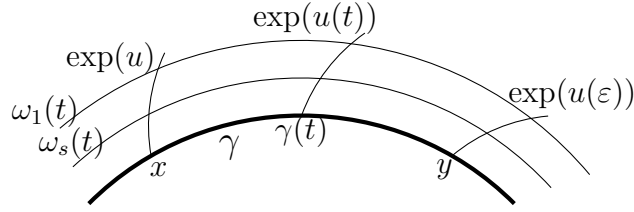
\begin{figure}
\begin{tikzpicture}
\draw[ultra thick] (45:4) arc (45:135:4);
\path node at (120:3.8) {$x$};
\path node at (90:3.8) {$\gamma(t)$};
\path node at (60:3.8) {$y$};
\path node at (105:3.7) {\large $\gamma$};
\draw (120:4) arc (190:155:2.3);
\path node at (118:5.4) {$\exp(u)$};
\path node at (88:5.3) {$\exp(u(t))$};
\path node at (50:5.6) {$\exp(u(\eps))$};
\draw (90:4) arc (160:125:2.3);
\draw (60:4) arc (130:95:2.3);
\draw (42.5:4.5) arc (42.5:132.5:4.5);
\draw (40:5) arc (40:130:5);
\path node at (130:4.75) {$\omega_s(t)$};
\path node at (130:5.3) {$\omega_1(t)$};
\end{tikzpicture}
\caption{Parallel transport of $u$ along $\gamma$}
\label{ptgamma}
\end{figure}

\begin{prop} For each $s\in[0,1]$, $L_s\le \widetilde{L}_s$. \end{prop}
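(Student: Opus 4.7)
My plan is to recognize the variation $\omega_s(t) = \exp_{\gamma(t)}(s\,u(t))$ as a one-parameter family of geodesics in $\mcX$ indexed by $t$, and then invoke Jacobi-field comparison. For each fixed $t \in [0, \eps]$, the curve $s \mapsto \omega_s(t)$ is by definition a geodesic with initial velocity $u(t)$; consequently the variation field $J_t(s) := \partial_t \omega_s(t)$ is a Jacobi field along this geodesic, with initial data
\[
J_t(0) = \gamma'(t), \qquad (D_s J_t)(0) = (D_t u)(t) = 0,
\]
the second equality holding because $u(\cdot)$ was constructed as the parallel transport of $u$ along $\gamma$. The identical construction on $S$ produces a Jacobi field $\widetilde{J}_t$ along $s \mapsto \widetilde{\omega}_s(t)$ with $\widetilde{J}_t(0) = \widetilde{\gamma}'(t)$ and $(D_s \widetilde{J}_t)(0) = 0$. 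Because $\gamma$ and $\widetilde{\gamma}$ are (unit-speed) geodesics of the same length $\eps$, because $\|\widetilde{u}\| = \|u\|$, and because $\langle \widetilde{u}, \widetilde{\gamma}'(0)\rangle = \langle u, \gamma'(0)\rangle$ by construction (an identity preserved by parallel transport along geodesics), the initial norms $\|J_t(0)\| = \|\widetilde{J}_t(0)\|$ and the angles between these initial values and the tangents of the $s$-geodesics agree on the two manifolds.

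I would then compare the two Jacobi fields using a Rauch-type inequality. To avoid invoking the less standard ``$J(0) \neq 0$, $J'(0) = 0$'' version of Rauch directly, I would decompose each Jacobi field into its tangential and normal components relative to its underlying $s$-geodesic. The tangential component $J_t^T(s) = \langle J_t(s), c_t'(s)\rangle\, c_t'(s)/\|c_t'(s)\|^2$ is affine in $s$ with coefficients determined solely by $\langle \gamma'(t), u(t)\rangle$ and $\|u(t)\|$, which match the sphere data; hence $\|J_t^T(s)\| = \|\widetilde{J}_t^T(s)\|$ identically. The normal components satisfy the standard Jacobi equation $J^{\perp\prime\prime} + R(J^\perp, c')c' = 0$, whose right-hand side involves only sectional curvatures in planes containing $c'$. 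Since $\Sec_\mcX \ge \Sec_S$ pointwise (recall that $S$ was chosen so its constant curvature realizes the infimum of $\Sec_\mcX$) and since the hypothesis on $u$ excludes conjugate points along the relevant segment, the Rauch comparison theorem applied to the normal parts yields $\|J_t^\perp(s)\| \le \|\widetilde{J}_t^\perp(s)\|$ for all $s \in [0,1]$. Adding the squared norms, $\|J_t(s)\| \le \|\widetilde{J}_t(s)\|$ pointwise in $t$ and $s$.

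To finish, I would integrate the pointwise bound against $dt$. Since $\omega_s(\cdot)$ and $\widetilde{\omega}_s(\cdot)$ are precisely the $t$-curves whose lengths are being compared, and since $\partial_t \omega_s(t) = J_t(s)$,
\[
L_s = \int_0^\eps \|J_t(s)\|\,dt \le \int_0^\eps \|\widetilde{J}_t(s)\|\,dt = \widetilde{L}_s,
\]
as claimed. The main technical obstacle I anticipate is packaging Rauch's theorem in the correct form: the textbook statement assumes $J(0) = 0$, whereas here $J(0) = \gamma'(t) \ne 0$ and $J'(0) = 0$. The tangential/normal split above is how I would sidestep this issue and reduce to the ordinary normal Jacobi-field comparison, which is unambiguous.
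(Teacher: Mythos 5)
Your overall architecture is the same as the paper's: identify $U=\partial_t\omega_s(t)$ and $\widetilde U=\partial_t\widetilde\omega_s(t)$ as Jacobi fields along the $s$-geodesics, split each into a component tangential to the geodesic and a normal component, observe that the tangential components agree because they are determined by the initial data (which match by construction and by parallelism of $u(t)$, $\gamma'(t)$ along $\gamma$), apply a Rauch-type comparison to the normal components using $\Sec_{\mcX}\ge\Sec_S$, and integrate in $t$. Your explicit computation of the initial conditions $J_t(0)=\gamma'(t)$, $J_t'(0)=D_t u(t)=0$ via symmetry of the connection is a nice touch that the paper leaves implicit.

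The genuine gap is in your final step. You claim the tangential/normal split lets you avoid the ``$J(0)\neq 0$, $J'(0)=0$'' version of Rauch and reduce to ``the ordinary normal Jacobi-field comparison.'' It does not: the normal component still satisfies $J_t^{\perp}(0)=(\gamma'(t))^{\perp}\neq 0$ (unless $\gamma'(t)$ happens to be proportional to $u(t)$) together with $(J_t^{\perp})'(0)=0$, which is exactly the initial-value configuration the classical Rauch theorem (vanishing initial value, nonzero initial derivative) does not cover. The comparison you need for these initial conditions is the second Rauch comparison theorem (Berger's theorem) --- this is precisely Theorem~1.34 of Cheeger--Ebin, which the paper invokes at this point --- and its hypothesis is the absence of focal points along the geodesics $s\mapsto\exp_{\gamma(t)}(su(t))$ on $[0,1]$, not merely absence of conjugate points. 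So the decomposition is still the right move (it isolates the curvature-insensitive tangential part), but the normal-part comparison must be carried out with Rauch~II/Berger (or an equivalent index-form/Riccati argument for fields with vanishing initial covariant derivative); as written, the step quoting the ordinary Rauch theorem would fail.
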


\begin{proof} In the case that $\langle u,\gamma'(0)\rangle=0$, this is a special case (with $f(t)=\|u\|$ a constant function) of~\cite[Corollary 1.36]{CE08}, a consequence of the Rauch comparison theorem. When this is not the case, we modify the proof given there. For $t\in [0,\eps]$ and $s\in [0,1]$, let $\eta_t(s)=\exp_{\gamma(t)}(su(t))$; define $\widetilde{\eta}_t(s)$ similarly on $S$. The family of tangent vectors $U=\omega'_s(t)$ and $\widetilde{U}=\widetilde{\omega}'_s(t)$ are Jacobi fields. Now, a Jacobi field $U$ along $\eta_s$ can be decomposed as a sum $U^{\|}+U^\perp$ of vector fields, where $U^{\|}$ is parallel to $\eta$ and $U^\perp$ is orthogonal, as shown in Figure~\ref{fig:jacdec}. Similarly, $\widetilde{U}$ can be decomposed as a sum $\widetilde{U}^\|+\widetilde{U}^\perp$.

It is known (see for instance~\cite[Corollary B.14, pp.\ 292--293]{CK04}) that any Jacobi field $J(t)$ along a geodesic $\eta(t)$ has a unique orthogonal decomposition $J(t)=J^\perp(t)+(at+b)\eta'(t)$, for some $a,b\in\RR$, where $\langle J^\perp(t),\eta'(t)\rangle=0$. Thus, in our case, for each $s$, $U_s^\|(t)=(a_st+b_s)\eta'_s(t)$. Similarly, $\widetilde{U}_s^\|(t)=(\widetilde{a}_st+\widetilde{b}_s)\widetilde{\eta}'_s(t)$. By construction, we have $a_s=\widetilde{a}_s$ and $b_s=\widetilde{b}_s$. Hence $\|U^\|\|=\|\widetilde{U}^\|\|$. By the Rauch comparison theorem~\cite[Thereom 1.34]{CE08}, $\|\widetilde{U}^\perp(\widetilde{\eta}_t(1))\|\ge\|U^\perp(\eta_t(1))\|$, for each $t$. Combining the parallel and orthogonal components, we have $\|\widetilde{U}(\widetilde{\eta}_t(1))\|\ge\|U(\eta_t(1))\|$. Since $L_s=\int_0^\eps \|U(\omega_s(t))\|\, dt$ and similarly for $\widetilde{L}_s$, the proposition follows. \end{proof}

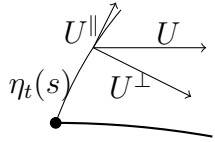
\begin{figure}
\begin{tikzpicture}
\filldraw (0,0) circle (2pt);
\draw[thick] (0,0) arc (90:80:12);
\draw (0,0) arc (160:140:5);
\path node at (-.2,.5) {$\eta_t(s)$};
\draw[->] (.5,1) -- (2,1);
\path node at (1.5,1.2) {$U$};
\draw[->] (.5,1) -- (1.8,.35);
\path node at (1,.5) {$U^\perp$};
\draw[->] (.5,1) -- (.8,1.6);
\path node at (.35,1.25) {$U^\|$};
\end{tikzpicture}
\caption{Decomposition of the Jacobi field $V=W+W^\perp$. $W$ is parallel to $\eta$.}
\label{fig:jacdec}
\end{figure}

As a consequence, we can compare transference plans of Gaussian kernels on $S$ and $\mcX$: as long as we avoid the exceptional sets, the distance between points in neighborhoods of $x$ and $y$ in $\mcX$ are less than those of the corresponding points on $S$. After scaling, the transference plan on $S$ described in the proof of Lemma~\ref{lem:Wasserstein-sphere} is exactly the path taken by $w_0$. Thus, the Wasserstein distance on $\mcX$ is lower than that of $S$, so the coarse Ricci curvature is higher. Comparing with Corollary~\ref{cor:crcspheregauss} and using the scaling described below Corollary~\ref{cor:crcspheregauss}, we find:

\begin{thm} \label{thm:crc-hmc-gaussian} The coarse Ricci curvature of $\mcX$ satisfies \[\kappa\ge\frac{\Trace(\Lambda)}{3d^2}+O(d^{-2}).\] \end{thm}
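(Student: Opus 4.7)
\medskip
\noindent\textbf{Proof proposal.} The plan is to assemble the ingredients prepared earlier into a clean comparison between HMC on $\mcX$ and a random walk on a round sphere. Fix two nearby points $x,y\in\mcX$ at Jacobi-distance $\eps$, and let $P_x,P_y$ denote the one-step HMC transition kernels starting from $x,y$. By Lemma~\ref{lem:SecCurvature} and Lemma~\ref{lem:pstable}, off an event of probability $O(e^{-c\sqrt{d}})$ the sectional curvature along any HMC trajectory is at least $\frac{\Trace(\Lambda)}{(1+\delta)d^3}$ for arbitrarily small $\delta>0$ (Remark~\ref{Rem:MeanCurvature}), and $\|p(t)\|^2$ stays within $O(d^{3/4})$ of $d$ for $t\in[0,d^{-1/2}]$. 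Ignoring this negligible exceptional set, we may therefore take the comparison sphere $S$ from the proposition preceding the theorem to have constant sectional curvature slightly smaller than $\frac{\Trace(\Lambda)}{d^3}$, so that the sectional-curvature hypothesis of Rauch comparison is satisfied everywhere along the HMC trajectories.

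Next I would invoke the proposition $L_s\le\widetilde L_s$: it furnishes a transference plan between $P_x$ and $P_y$ whose total cost is bounded by the analogous transference plan between the spherical Gaussian kernels $\widetilde P_{\tilde x},\widetilde P_{\tilde y}$ at the points $\tilde x,\tilde y\in S$ that are spherical-distance $\eps$ apart. Integrating the inequality $L_s\le\widetilde L_s$ against the Gaussian weight on the momentum (appropriately scaled through the Jacobi factor $\sqrt{2(h-V)}\approx\sqrt d$) produces
\[
W_1(P_x,P_y)\;\le\;W_1(\widetilde P_{\tilde x},\widetilde P_{\tilde y})\;\le\;\bigl(1-\kappa_S\bigr)\,\eps,
\]
where $\kappa_S$ is the coarse Ricci curvature on $S$ computed in Corollary~\ref{cor:crcspheregauss}. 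Dividing by $\eps$ and taking the infimum over $x,y$ yields $\kappa\ge\kappa_S$.

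It then remains to convert the spherical lower bound $\kappa_S\ge\frac{3\Trace(\Lambda)}{8d^2}+O(d^{-2})$ from Corollary~\ref{cor:crcspheregauss} into the claimed bound $\frac{\Trace(\Lambda)}{3d^2}+O(d^{-2})$. The slack from $3/8$ down to $1/3$ is exactly what is needed to absorb the multiplicative corrections accumulated above: the $(1+\delta)$ factor in the sectional-curvature lower bound, the $(1+O(d^{-1/4}))$ factor from $\sqrt{K}\approx\sqrt d$ in the Jacobi scaling, and the $(1+O(d^{-1/4}))$ factor in the length $r\approx\Trace(\Lambda)^{1/2}/d$ of a single HMC step on the unit sphere. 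Choosing $\delta$ sufficiently small and $d$ sufficiently large, all these corrections together shrink $3/8$ by an arbitrarily small amount, which is comfortably absorbed by the bound $1/3$.

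The main obstacle I expect is bookkeeping rather than ideas: carefully matching the metric rescalings (Euclidean $\leftrightarrow$ Jacobi $\leftrightarrow$ unit sphere $\leftrightarrow$ sphere of radius $d^{3/2}\Trace(\Lambda)^{-1/2}$) so that the transference plan on $S$ used in Lemma~\ref{lem:Wasserstein-sphere} really does correspond, step for step, to the family $\omega_s(t)$ in the preceding proposition, and verifying that the probabilistic exceptional sets from Lemma~\ref{lem:SecCurvature} and Lemma~\ref{lem:pstable} can be absorbed into the $O(d^{-2})$ error in a way consistent with the probabilistic interpretation of $\|f\|_{\Lip}$ and $\sigma_\infty$ discussed at the end of \S\ref{sec:concentration}.
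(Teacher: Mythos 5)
Your proposal is correct and follows essentially the same route as the paper: off the exceptional sets of Lemmas~\ref{lem:SecCurvature} and~\ref{lem:pstable}, compare HMC transference plans to those on the constant-curvature comparison sphere via the Rauch-based proposition $L_s\le\widetilde L_s$, conclude $\kappa\ge\kappa_S$, and then feed in Corollary~\ref{cor:crcspheregauss} with the scaling discussion, relaxing the constant from $3/8$ to $1/3$ to absorb the $(1+\delta)$ and $(1+O(d^{-1/4}))$ corrections. This matches the paper's argument, which it leaves at the same level of bookkeeping detail you describe.
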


\begin{prop} The coarse diffusion constant $\sigma(q)^2$ is $d$. \end{prop}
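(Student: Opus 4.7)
The plan is to unfold the definition of $\sigma(q)^2$ and evaluate it to leading order, using the same ``probabilistic'' philosophy as in \S\ref{sec:concentration}: we restrict to the overwhelmingly likely event on which the relevant $\chi^2$ quantities are concentrated near their means, and absorb the rest into lower-order error. By definition
\[ \sigma(q)^2 = \tfrac{1}{2}\iint_{\mcX\times\mcX} \rho(x,y)^2\, P_q(dx)\, P_q(dy) = \tfrac{1}{2}\,\EE\bigl[\rho(X,Y)^2\bigr], \]
where $X$ and $Y$ are two independent HMC proposals from $q$, obtained by drawing independent momenta $p_1,p_2\sim \mcN(0,I_d)$, integrating the Hamilton equations from $(q,p_i)$ for time $t_1$, and reading off the resulting position.

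First I would control the Euclidean displacement. Writing $X = q + t_1 p_1 + R_1$ and $Y = q + t_1 p_2 + R_2$, where $R_i$ is the second-order remainder from integrating Hamilton's equations, a routine application of Lemma~\ref{lem:pstable} (which says $\|p_i(t)\|^2$ stays within $d+O(d^{3/4})$ throughout the trajectory) together with Taylor expansion gives $\|R_i\| = O(t_1^2 \|\nabla V\|) = O(d^{-1/2})$ on the high-probability event. With $t_1 = d^{-1/2}$ we therefore have $X-Y = t_1(p_1-p_2) + O(d^{-1/2})$ in Euclidean norm, so
\[ \|X-Y\|_{\mathrm{Eucl}}^2 = t_1^2 \|p_1-p_2\|^2 + O(d^{-1/2}). \]
Since $p_1-p_2\sim\mcN(0,2I_d)$ has $\EE\|p_1-p_2\|^2 = 2d$, the Euclidean squared distance has expectation $2 + o(1)$.

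Next I convert to the Jacobi metric. As discussed in \S\ref{sec:concentration}, $\rho_h(x,y) = \sqrt{2(h-V(q))}\,\|x-y\| + O(\|x-y\|^2)$ for points near $q$, and $2(h-V) = 2K = \|p\|^2$. On the concentration event $\|p\|^2 = d + O(d^{3/4})$, so the multiplicative factor is $d(1+O(d^{-1/4}))$, giving
\[ \rho(X,Y)^2 = \|p\|^2\,\|X-Y\|_{\mathrm{Eucl}}^2 + O(d^{-1/2}) = d\bigl(1+O(d^{-1/4})\bigr)\cdot t_1^2\|p_1-p_2\|^2 + O(d^{1/2}). \]
Taking expectations and using $\EE\|p_1-p_2\|^2 = 2d$ and $t_1^2 = 1/d$, one gets $\EE[\rho(X,Y)^2] = 2d(1+o(1))$, so $\sigma(q)^2 = d(1+o(1))$, which to the same level of approximation used for $\kappa$, $n_q$, etc.\ in Table~\ref{locdimtable} is recorded simply as $\sigma(q)^2 = d$.

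The main subtlety, as in \S\ref{sec:concentration}, is that the Jacobi metric is defined with a fixed $h$ while each trajectory sees its own $h$. I would handle this exactly as the paper handles the Lipschitz seminorm: restrict to the event $|\|p_i\|^2 - d| \le d^{3/4}$, which by~\cite[Lemma~1]{LM00} has complementary probability $O(e^{-c\sqrt{d}})$, and note that replacing the ``trajectory-dependent'' $h$ by its typical value changes the metric by a factor $1+O(d^{-1/4})$, which is absorbed into the stated approximation.
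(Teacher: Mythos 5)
Your proof is correct at the level of rigor the paper operates at, but it takes a different route from the paper's own argument. The paper simply idealizes the transition kernel as an identity Gaussian walk (this is the convention recorded in the caption of Table~\ref{locdimtable}) and evaluates $\tfrac12\iint\|x-y\|^2\,P_q(dx)\,P_q(dy)$ exactly, by symmetry and a reduction to a $2d$-dimensional Gaussian integral in polar coordinates, obtaining $\sigma(q)^2=d$ on the nose; nothing about Hamiltonian trajectories or the Jacobi metric appears in that computation. You instead start from the actual HMC kernel: you expand the trajectory to first order, $X\approx q+t_1p_1$ with remainder $O(d^{-1/2})$ controlled via Lemma~\ref{lem:pstable}, convert Euclidean to Jacobi distance through the factor $\sqrt{2(h-V)}=(1+O(d^{-1/4}))\sqrt{d}$, and use $\EE\|p_1-p_2\|^2=2d$ with $t_1=d^{-1/2}$ to get $\sigma(q)^2=d(1+o(1))$, handling the trajectory-dependent $h$ by the same exceptional-set argument the paper uses for $\|f\|_{\Lip}$ and $\sigma_\infty$. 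What your version buys is a justification of the step the paper takes for granted, namely that in the Jacobi metric an HMC step looks like a unit Gaussian increment (consistent with the step-length analysis preceding Corollary~\ref{cor:crcspheregauss}); what the paper's version buys is brevity and an exact constant rather than an asymptotic one. One cosmetic point: your intermediate error term $\rho(X,Y)^2=\|p\|^2\|X-Y\|_{\mathrm{Eucl}}^2+O(d^{-1/2})$ is too optimistic, since the quadratic correction in the metric expansion contributes $O(d^{1/2})$ at Euclidean distance $O(1)$; your subsequent display states the correct $O(d^{1/2})$, which is harmless against the main term $2d$, so the conclusion stands.
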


\begin{proof} Since the transition kernel is insensitive to the starting point, we may assume that $q=0$. We have \begin{align*} \sigma(q)^2 &= \frac{1}{2} \iint_{\RR^d\times\RR^d} \frac{1}{(2\pi)^d} \exp\left(-\frac{1}{2}(\|x\|^2+\|y\|^2)\right) \|x-y\|^2\, dV(x)\, dV(y) \\ &= \frac{1}{2(2\pi)^d} \iint_{\RR^d\times\RR^d} \exp\left(-\frac{1}{2}(\|x\|^2+\|y\|^2)\right)\sum_i (x_i^2-2x_iy_i+y_i^2)\, dV(x)\, dV(y) \\ &= \frac{1}{2(2\pi)^d}\int_{\RR^{2d}} e^{-\|z\|^2/2}\|z\|^2\, dV \\ &= \frac{SA_{2d-1}}{2(2\pi)^d}\int_0^\infty e^{-r^2/2}r^2 r^{2d-1}\, dr \\ &= d. \end{align*} \end{proof}

Based on \cite{Ollivier09}, it follows that the local dimension $n_q$ is $d+O(1)$.


We show now how the coarse Ricci curvature, coarse diffusion constant, local dimension, and eccentricity from Table~\ref{constants} can be used to calculate concentration inequalities for a specific example. We focus on the Gaussian distribution with weak dependencies between variables. But first we need to introduce two propositions.

\begin{prop} For any $\eps>0$, for $d\gg 0$, the granularity $\sigma_{\infty}$ of HMC with Gaussian target distribution is $(1+\eps) \sqrt{d}$. \end{prop}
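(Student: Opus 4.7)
The plan is to unpack the definition of $\sigma_\infty$ in the Jacobi metric and then bound the diameter of the support of $P_x$ by analyzing a typical HMC trajectory. Recall that $\sigma_\infty=\tfrac12\sup_{x\in\mcX}\diam P_x$, where, as explained in the discussion after Theorem~\ref{thm:ConcentrationInequality}, diameter is interpreted probabilistically: we restrict to the overwhelmingly likely set of initial momenta on which $\|p\|^2$ is close to $d$, and ignore momenta in the exceptional set (which has probability $O(e^{-c\sqrt d})$ and so contributes a vanishing tail).

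First I would compute the Jacobi length of a single HMC trajectory. Since $g_h=2(h-V)\langle\cdot,\cdot\rangle$ with $h=V+K$, the Jacobi scaling factor along the trajectory is $\sqrt{2(h-V)}=\sqrt{2K}=\|p\|$. Because $\dot q=p$ in Euclidean coordinates, the Jacobi length of the trajectory from time $0$ to $t_1=d^{-1/2}$ is
\[
\int_0^{t_1}\sqrt{g_h(\dot q,\dot q)}\,dt=\int_0^{t_1}\|p(t)\|^2\,dt.
\]
By Lemma~\ref{lem:pstable}, for typical initial data we have $\bigl|\|p(t)\|^2-d\bigr|\le K_4 d^{3/4}$ uniformly for $t\in[0,d^{-1/2}]$, so the Jacobi length of a typical trajectory lies in the interval $\sqrt d\,(1\pm K_4d^{-1/4})$.

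Next I would bound the diameter of $P_x$. Since $P_x$ is supported on the endpoints $q(t_1)$ obtained by varying the initial momentum $p\sim\mcN(0,I_d)$, and each typical endpoint lies at Jacobi distance at most $\sqrt d\,(1+K_4 d^{-1/4})$ from $x$, the triangle inequality yields
\[
\diam P_x\le 2\sqrt d\,(1+K_4 d^{-1/4}).
\]
This upper bound is achieved (up to the same error) by pairs of trajectories emanating in opposite directions from $x$, since by the rotational symmetry of $\mcN(0,I_d)$ both directions $\pm p$ are equally likely. Taking the supremum over $x\in\mcX$, which is harmless because the bound depends only on $\|p\|$ and not on the starting point, and then halving, gives $\sigma_\infty\le \sqrt d\,(1+K_4 d^{-1/4})$.

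Finally, given $\eps>0$ one chooses $d$ large enough that $K_4 d^{-1/4}<\eps$; one likewise discards the exceptional momenta (probability $O(e^{-c\sqrt d})$) in the probabilistic interpretation of the supremum. The main subtlety, and really the only thing beyond bookkeeping, is justifying that the Jacobi scaling factor $\|p(t)\|$ stays close to $\sqrt d$ not just at time $0$ but throughout the trajectory; this is already handled by Lemma~\ref{lem:pstable}, so everything reduces to assembling these pieces. The conclusion is $\sigma_\infty=(1+\eps)\sqrt d$ for $d$ sufficiently large.
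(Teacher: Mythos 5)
Your proof is correct and takes essentially the same approach as the paper, which establishes this proposition by the computation in the paragraph preceding Corollary~\ref{cor:crcspheregauss}: a typical step moves Euclidean distance about $1$ (speed $\approx\sqrt d$ for time $d^{-1/2}$) and the Jacobi metric rescales lengths by $\sqrt{2K}=\|p\|\approx\sqrt d$, giving a step of Jacobi length $(1+O(d^{-1/4}))\sqrt d$ and hence $\sigma_\infty\le(1+\eps)\sqrt d$ for $d$ large. Your explicit integration of $\|p(t)\|^2$ over $[0,d^{-1/2}]$ with the uniform control from Lemma~\ref{lem:pstable}, and the exclusion of the exceptional momenta, simply spell out that same calculation.
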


Recall that, as usual, we interpret $\sigma_\infty$ in such a way as to cut out points in the support of $P_x$ of exceptionally low probability. This computation was already done in the paragraph preceding Corollary~\ref{cor:crcspheregauss}.

\begin{prop} \label{prop:Lischitz}
For $d\gg 0$, the Lipschitz norm of a coordinate function 
\[ f_i: q \to q_i \]
of a HMC random walk with Gaussian measure $\pi$ on $\RR^d$ as in Theorem \ref{thm:ConcentrationInequality} is bounded above by $\frac{2}{\sqrt{d}}$.
\end{prop}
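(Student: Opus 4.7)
The plan is to leverage the discussion preceding this proposition (just after Theorem~\ref{thm:ConcentrationInequality}), where the authors explain how to interpret the Lipschitz norm in the Jacobi metric probabilistically. The key point is that for the coordinate function $f_i$, the Euclidean Lipschitz norm is trivial to compute, and we only need to convert between Euclidean and Jacobi distances.

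First I would observe that in the standard Euclidean metric on $\RR^d$, the coordinate function $f_i(q) = q_i$ is $1$-Lipschitz, since $|f_i(x) - f_i(y)| = |x_i - y_i| \le \|x - y\|$ by definition of the Euclidean norm. Next, I would recall the conversion between Euclidean and Jacobi distances: for two nearby points $x,y$, the Jacobi distance is $\rho_h(x,y) = \|p\|\,\|x-y\| + O(\|x-y\|^2)$, where $\|p\|^2 = 2K = 2(h - V)$. Hence the Lipschitz quotient in the Jacobi metric is the Euclidean Lipschitz quotient divided by $\|p\|$, up to higher order terms.

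Then I would invoke the tail bound for $\chi^2$ already used in the paper: since $p \sim \mcN(0,I_d)$, we have $\|p\|^2 \sim \chi_d^2$, and by~\cite[Lemma~1, p.~1325]{LM00},
\[
\PP\!\left(\big|\|p\|^2 - d\big| \ge d^{3/4}\right) = O\!\left(e^{-c\sqrt{d}}\right)
\]
for some positive constant $c$. Restricting attention to the non-exceptional set, $\|p\| \ge \sqrt{d - d^{3/4}} = \sqrt{d}\,(1 + O(d^{-1/4}))$. Following the probabilistic convention already adopted in the excerpt (in which exceptional events that occur with probability $O(e^{-c\sqrt{d}})$ are discarded, since they cannot meaningfully affect a chain run for polynomially many steps), the Lipschitz norm of $f_i$ in the Jacobi metric is bounded above by $\frac{1}{\sqrt{d}\,(1 + O(d^{-1/4}))}$, which for $d$ sufficiently large is at most $\frac{2}{\sqrt{d}}$.

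The main step that requires any care is the justification of the probabilistic interpretation, but this is already handled in the earlier paragraph of the paper, so the proof reduces to applying that interpretation to the simplest possible function. No genuine obstacle arises; the proposition is essentially a direct consequence of the relation $\rho_h \approx \|p\|\,\rho_{\text{Eucl}}$ together with concentration of $\|p\|^2$ around $d$.
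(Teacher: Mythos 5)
Your proof is correct and follows essentially the same route as the paper: the coordinate function is $1$-Lipschitz in the Euclidean metric, the Jacobi metric rescales local distances by $\|p\|=\sqrt{2K}$, so $\|f_i\|_{\Lip}\approx\|p\|^{-1}$, and concentration of $\|p\|$ near $\sqrt{d}$ gives the bound $\frac{2}{\sqrt{d}}$ away from an exceptional set. The only cosmetic difference is that the paper's proof quantifies the concentration via the mean and variance of the inverse $\chi$ distribution, whereas you invoke the Laurent--Massart $\chi^2$ tail bound (exactly as the paper itself does in its general discussion of the Lipschitz norm following Theorem~\ref{thm:ConcentrationInequality}), which if anything gives a sharper exceptional-probability estimate.
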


\begin{proof}
Starting with Definition \ref{lipschitz}
\[ \|f\|_{\Lip} := \sup_{x,y \in\X } \frac{|f(x)-f(y)|}{\rho(x,y)} = \sup\frac{1}{\sqrt{2K}} = \|p\|^{-1}. \]
The inverse momentum $\|p\|^{-1}$ follows an inverse $\chi$ distribution with $d$ degrees of freedom. Its expectation and variance are given by 
\[ \EE(\|p\|^{-1}) = \frac{\Gamma((d-1)/2)}{\sqrt{2}~\Gamma(d/2)}, \qquad
\Var(\|p\|^{-1}) = \frac{1}{d-2} - \EE(\|p\|^{-1})^2. \]
The variance is small in high dimensions, so away from an exceptional set, $\|p\|^{-1}$ is very close to $\EE(\|p\|^{-1})$. Thus for $d\gg 0$, we have $\|f\|_{\Lip}\le\frac{1+\eps}{\sqrt{d}}$ away from the exceptional set.
\end{proof}

Everything so far assumes that we stay away from the exceptional set at all times. Let us now tabulate all the places we had to assume that we were avoiding an exceptional set, and the probability of landing in the exceptional set:
\begin{enumerate}
\item Computation of the Lipschitz norm $\|f\|_{\Lip}$
\item Computation of the granularity $\sigma_\infty$
\item Lemma~\ref{lem:SecCurvature}
\item Lemma~\ref{lem:pstable}
\item Comparing HMC to the Gaussian walk on the sphere, following Corollary~\ref{cor:crcspheregauss}.
\end{enumerate}
In each case, the exceptional set occurs with probability $O(e^{-cd^{1/2}})$ for some $c>0$. Assuming we take $T$ steps, the probability we ever hit the bad region is $O(Te^{-cd^{1/2}})$. It is necessary to add this exceptional probability to the bound we get from Theorem~\ref{thm:ConcentrationInequality}. We obtain the following:

\begin{cor} \label{cor:probforgaussian} Under the hypotheses of Theorem~\ref{thm:ConcentrationInequality}, we have \[\PP_x(|\widehat{I}-\EE_x\widehat{I}|\ge r\|f\|_{\Lip})\le 2e^{-r^2/(16V^2(\kappa,T))}+O\left(Te^{-cd^{1/2}}\right)\] for $r<\frac{4V^2(\kappa,T)\kappa T}{3\sigma_\infty}$. \end{cor}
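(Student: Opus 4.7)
The plan is to apply Theorem~\ref{thm:ConcentrationInequality} directly, after restricting to a sample path that stays away from each exceptional set identified in the preceding enumeration, and then pay for that restriction with a union bound.

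First I would set up a coupling between the actual HMC chain $\{q_i\}_{i=0}^T$ and an idealized "truncated" chain $\{\widetilde{q}_i\}_{i=0}^T$ that agrees with HMC on the good event and otherwise is modified (for instance, by resampling the initial momentum from a truncated Gaussian that excludes momenta with $\big|\|p\|^2-d\big|>4d^{3/4}$, and by modifying the transition on any step where the sectional-curvature or Lipschitz-norm estimates would fail). On the good event $\mathcal{G}$ that \emph{none} of the exceptional events $(1)$--$(5)$ occurs during any of the $T$ steps, the two chains are identical. The point of this coupling is that, for the truncated chain, the estimates of $\kappa$, $\sigma(x)^2$, $n_q$, $\sigma_\infty$, and $\|f\|_{\Lip}$ derived in the preceding subsections hold \emph{uniformly}, which are exactly the hypotheses needed to plug into Theorem~\ref{thm:ConcentrationInequality}.

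Next I would union-bound the probability of the bad event. Each of the five exceptional sets enumerated just before the statement has been shown to have probability $O(e^{-cd^{1/2}})$ per step for some universal $c>0$ (depending only on $C$). Since we run the chain for $T$ steps, a union bound over steps and over the five event types yields
\[
\PP\bigl(\mathcal{G}^c\bigr) = O\bigl(T e^{-c d^{1/2}}\bigr),
\]
possibly after shrinking $c$. On $\mathcal{G}$, the truncated chain coincides with the actual HMC chain, so Theorem~\ref{thm:ConcentrationInequality}, applied to the truncated chain with the values of $\kappa$, $V^2(\kappa,T)$, and $\sigma_\infty$ coming from Theorem~\ref{thm:crc-hmc-gaussian} and the propositions above, yields
\[
\PP_x\bigl(|\widehat I - \EE_x\widehat I|\ge r\|f\|_{\Lip}\,,\,\mathcal{G}\bigr) \le 2 e^{-r^2/(16 V^2(\kappa,T))}
\]
in the regime $r < 4 V^2(\kappa,T)\kappa T/(3\sigma_\infty)$. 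Adding the two contributions gives the claimed bound.

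The main obstacle, in my view, is the bookkeeping for the coupling: one must verify that the truncated chain is genuinely a Markov chain with stationary distribution close enough to $\pi$ that $\EE_x\widehat I$ for the truncated chain matches $\EE_x\widehat I$ for HMC up to a term that is absorbed into the $O(Te^{-cd^{1/2}})$ error, and that the constants $\kappa$, $V^2(\kappa,T)$, $\sigma_\infty$ used in Theorem~\ref{thm:ConcentrationInequality} are the same (up to the $O(d^{-1/2})$ corrections already tracked) as those computed for actual HMC. Once this is in place, the corollary is essentially a union-bound repackaging of the earlier estimates and does not require any further analytical input.
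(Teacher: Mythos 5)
Your proposal is correct and matches the paper's argument: the paper proves the corollary by tabulating the same five exceptional events, union-bounding their probability over the $T$ steps as $O(Te^{-cd^{1/2}})$, and adding this to the bound from Theorem~\ref{thm:ConcentrationInequality}. Your explicit truncated-chain coupling is just a more formal packaging of this, and indeed the paper itself remarks (at the end of \S\ref{sec:concentration}) that such a coupling with a momentum-truncated chain is an ``essentially equivalent approach'' leading to the same result.
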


Plugging in the bounds obtained, assuming that $T_0=0$, and taking $d$ large enough so that our bounds absorb some of the error terms, we obtain \[\PP_x(|\widehat{I}-\EE_x\widehat{I}|\ge r\|f\|_{\Lip})\le 2e^{-Tr^2\Trace(\Lambda)^2/(144d^4)}+O\left(Te^{-cd^{1/2}}\right).\] Since $\Trace(\Lambda)=\Theta(d)$, for a fixed error bound $r$, it is necessary to take $T$ scaling as $d^2$ to make the first term small. For large $d$, this choice of $T$ will also keep the big $O$ term small. For a family of functions $f$ whose (Euclidean) Lipschitz norm is constant as $d\to\infty$, we can improve matters by keeping $r\|f\|_{\Lip}$ constant; since $\|f\|_{\Lip}=\Omega(d^{-1/2})$, we can take $T=Cd$, albeit with a large constant $C$. In general, the bounds give good scaling with dimensionality, but poor constants; there is surely much room to improve the constants.

Now we are ready to go through a simple example in 100 dimensions. In 100 dimensions, the bound we give on the probability of landing in the exceptional set is too high, but we ignore this issue and focus on the main term for now; in higher dimensions, the probability of landing in the exceptional set drops off subexponentially in the dimension.

Our aim is to sample from an $100$-dimensional multivariate Gaussian $\mcN( 0, \Sigma )$ with covariance matrix $\Sigma_{ij} = \exp(-|i-j|^2)$, so as to obtain an error of at most $0.05$ with high probability. We use the following HMC parameters, constants taken from Table \ref{constants} and from the computations in this section: 
\begin{center}
\vspace{0.2cm}
\begin{tabular}{ll}
\hline \\[-0.3cm]
Error bound & $r = 0.25$ \\ 
Starting point & $q_0 = 0$ \\
Markov chain kernel & $P \sim \mcN(0,I_{100})$ \\
Coarse Ricci curvature & $\kappa = 0.0048$ \\ 
Coarse diffusion constant & $\sigma^2(q) = 100$ \\
Granularity & $\sigma_\infty=20$ \\
Local dimension & $n_q = 100$ \\
Lipschitz norm & $\|f\|_{\Lip} = 0.2$ \\ 
\hline
\end{tabular}
\vspace{0.2cm}
\end{center}
In our example, the observable function $f$ is the first coordinate function
\[
I = \int_{\RR^{100}} q_1\,\pi(dq),
\]
so the correct solution to this integral is $I=0$. We start the HMC chain at the center of the distribution. 
In Figure \ref{fig:WeakDependenceCovariance} on the right, we see our theoretical concentration inequality as a function of the running time $T$ (in logarithmic scale).
The probability of making an error above our defined error bound $r=0.05$ is close to zero at running time $T=10^8$. 
\begin{figure}
    \centering
    \includegraphics[width=0.4\textwidth]{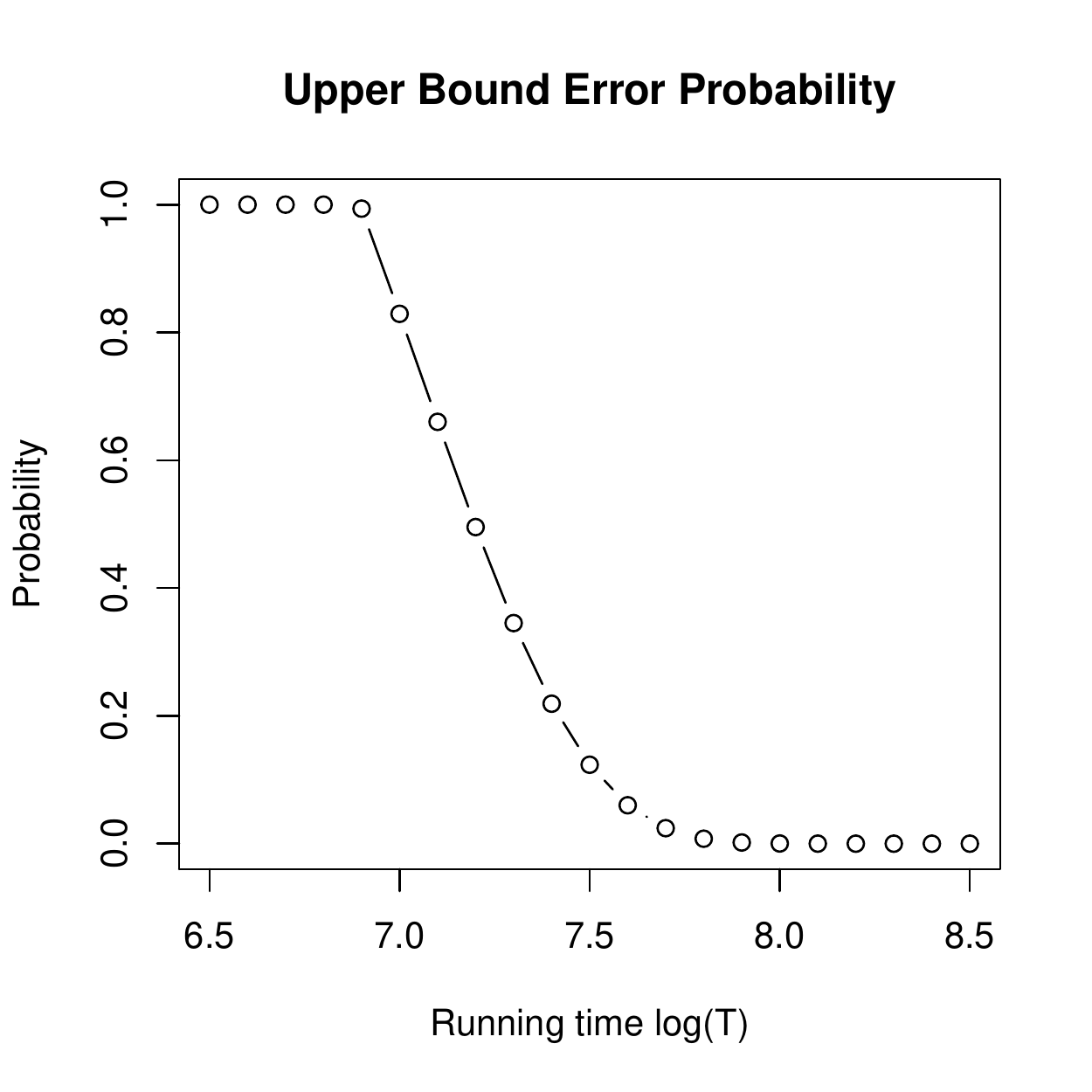}
    \caption{(Covariance structure with weak dependencies) 
    Concentration inequality. 
    }
    \label{fig:WeakDependenceCovariance}
\end{figure}

\subsection{Multivariate $t$ Distribution}
\label{sec:tDistribution}

In the previous section, we showed how to obtain concentration results for HMC Markov chains by using asymptotic sectional curvature estimates as $d\to\infty$. This was possible since our target distribution was a multivariate Gaussian, for which we proved that the probability of deviating from $\EE(\Sec)$ decreases like $K_2e^{-K_3\sqrt{d}}$ as $d\to\infty$. For most distributions of interest, e.g.\ posterior distributions in Bayesian statistics, such a proof is not feasible. In these cases, we propose to compute the empirical sectional curvature distribution and use the sample mean or sample infimum as a numerical approximation. Besides the practical benefits, as mentioned in Remark~\ref{riccicheating}, computing empirical curvatures ignores unlikely curvatures that we never see in practice. 

To illustrate this, we show how it can be done for the multivariate $t$ distribution
\[
\pi(q) = \frac{ \Gamma((\nu+d)/2) }{ \Gamma(\nu/2) \sqrt{\det(\Sigma) (\nu \pi)^d} } \left(1+ \frac{ q\tp \Sigma^{-1} q }{\nu}\right)^{-(\nu+d)/2}.
\]
Here $d$ is the dimension of the space, and $\nu$ is the degrees of freedom. Let $\Sigma$ be the covariance matrix. Let us write $\Sigma^{-1} = (a_{ij})$. Write $Q(q)$ for the quadratic form $q\tp \Sigma^{-1} q = \sum_{i,j} a_{ij} q_iq_j$. Hence, we can take the potential energy function $V$ to be 
\[
V(q) = \frac{\nu+d}{2} \log\left(1 + \frac{Q(q)}{\nu}\right).
\]
The gradient is a $d$-dimensional vector, whose $i^\text{th}$ component is
\[
\frac{\partial V}{\partial q_i} = (\nu+d) \frac{ \Sigma^{-1} q}{Q(q)+\nu}.
\]
The Hessian is a $d$ by $d$ matrix whose $ij$ component is
\[
\frac{\partial^2 V}{\partial q_i \partial q_j} = (\nu+d) \frac{ a_{ij}(Q(q)+\nu) - 2 (\sum_\ell (a_{i\ell} q_\ell))(\sum_m (a_{jm}q_m)) }{ (Q(q)+\nu)^2 }.
\]

Figure \ref{fig:EmpiricalCurvature_tDistribution} shows empirical sectional curvature distribution for different values of $\nu$ in dimension $d=100$.
\begin{figure}
    \centering
    \includegraphics[width=0.32\textwidth]{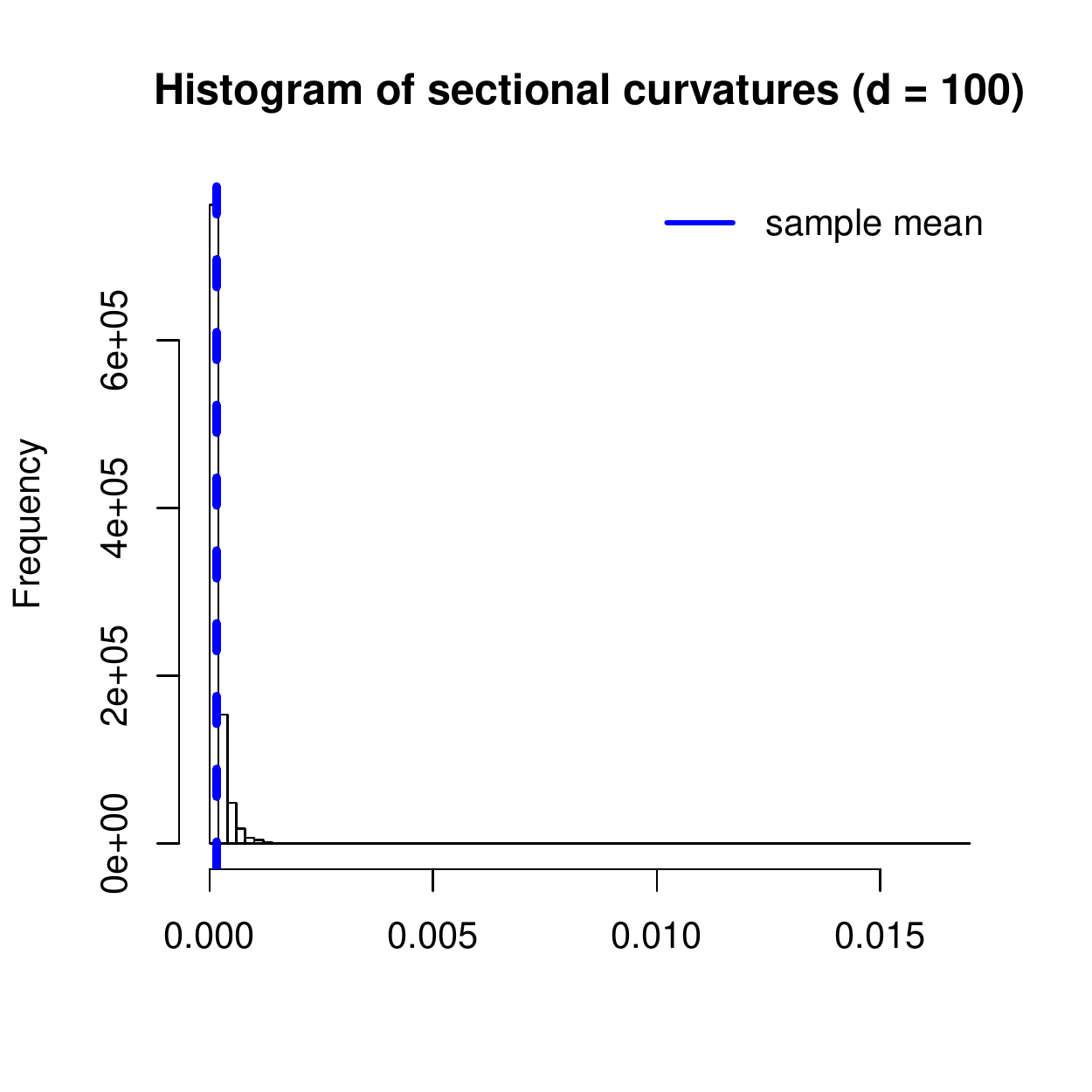}
    \includegraphics[width=0.32\textwidth]{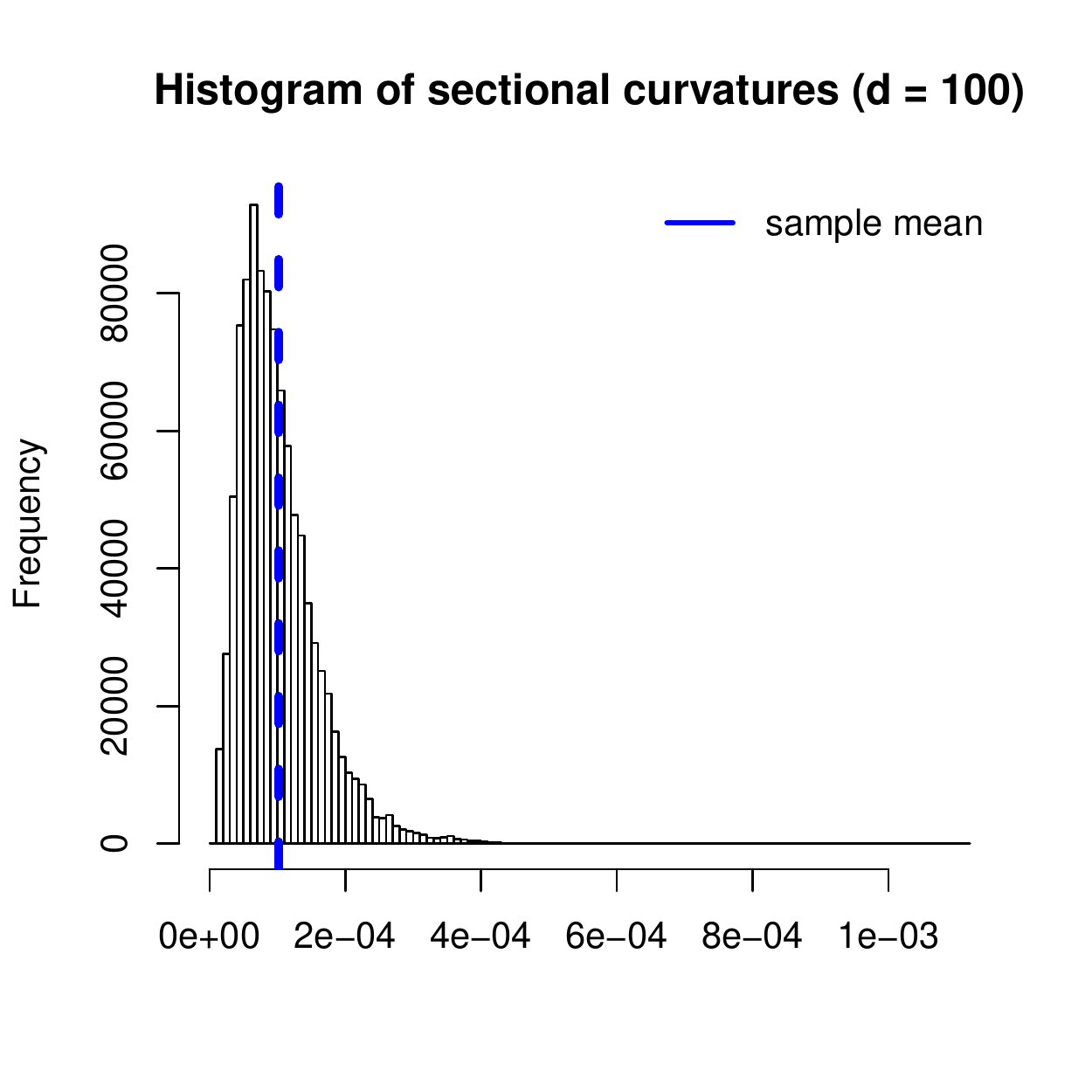}
    \includegraphics[width=0.32\textwidth]{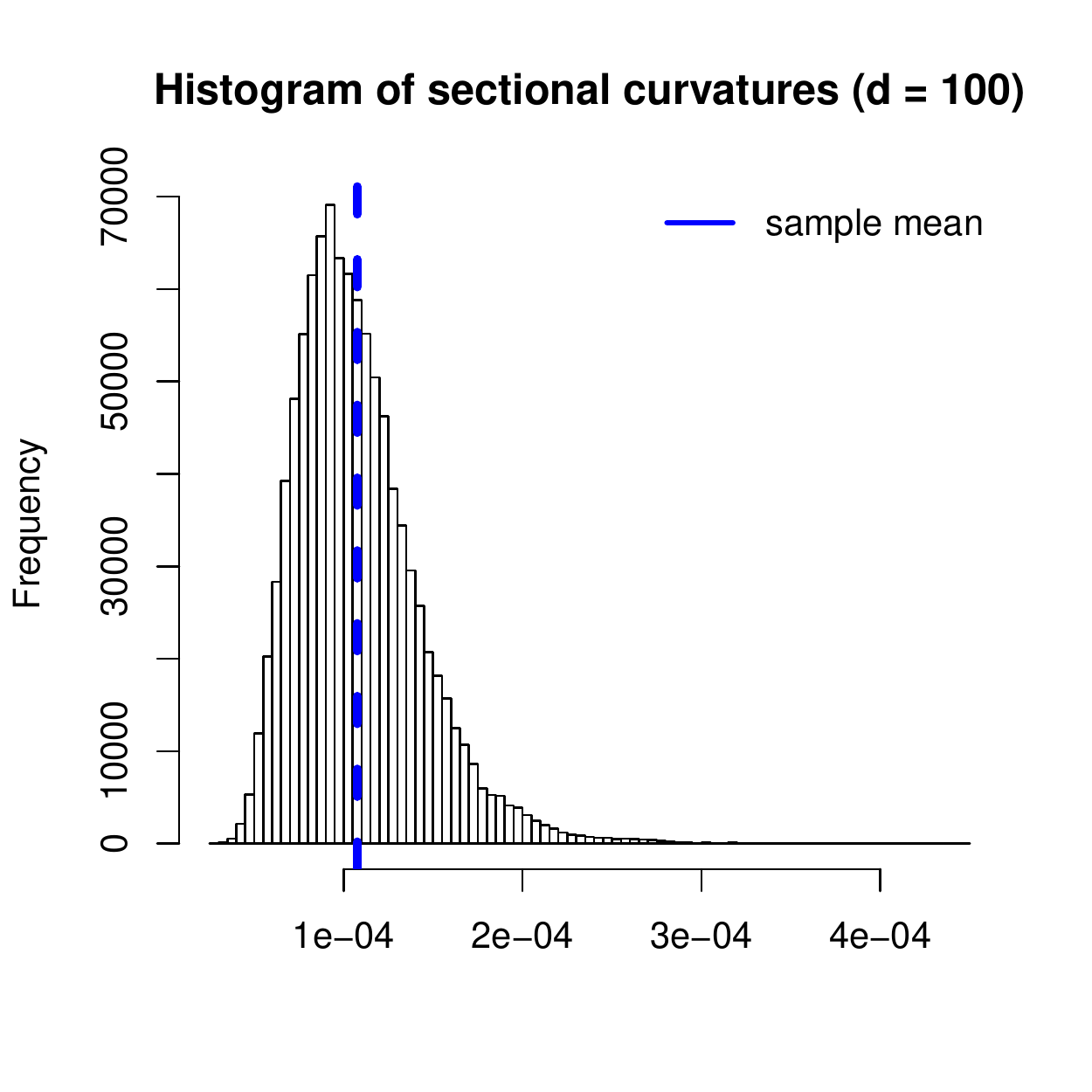}
    \caption{Empirical sectional curvature distribution for multivariate $t$ distribution with $\nu=1$ (top-left), $\nu=10$ (top-right), $\nu=100$ (bottom), $T_0=0$, $T=10^4$ and $d=100$.}
    \label{fig:EmpiricalCurvature_tDistribution}
\end{figure}
With increasing degrees of freedom $\nu$, we approach the curvature distribution of the Gaussian example; compare with Figure \ref{fig:IdentityCovarianceDimensions}. This makes sense, as we know that as $\nu \to \infty$, the multivariate $t$ distribution converges to a multivariate Gaussian. For lower degrees of freedom, the curvature is more spread out. This can be explained by the larger tails of the $t$ distribution and the sharper peak at its mode. Intuitively, larger tails means more equidensity regions of the space, and so we would expect curvature values closer to zero when we are far from the mode. Similarly, we get higher curvature around the mode of the distribution.

To transfer the sample curvature results into concentration inequalities, we can now pick either the sample mean or the sample infimum of the curvature sample distribution. For instance, in the case of $\nu=100$, we would take the sectional curvature to be around $10^{-4}$, which would give us similar concentration inequality figures as in the previous section for the Gaussian case.

\subsection{Bayesian Image Registration} \label{sec:anatomy}

In the previous section, we were able to compute empirical sectional curvatures from known closed-form gradients and Hessians of the potential function $V$. In applied problems, the analytical form of the Hessian is usually hard to derive and even harder to compute. A common way to approximate it is by first order Taylor expansion \cite{GM78}; in numerical methods literature this is referred to as the Gauss-Newton approximation to the Hessian. We will come back to what kind of conditions are needed for this approximation to make sense in our application; see \cite{AJS07a} for more details.

In this section, we use Gauss-Newton approximation to compute the Hessian of a real-world medical image problem and to compute empirical curvature similarly to what we did for the multivariate $t$ distribution. This allows us to obtain concentration inequalities for a Bayesian approach to medical image registration. 

The goal in medical image registration is to find a deformation field that maps one image to another image. For instance, these deformations are then used for the statistical analysis of shape differences between groups of patients. After introducing a basic mathematical formulation of the registration problem, we show how our concentration results can be used as a diagnostic tool for HMC complementary to other tools like visual assessment via trace plots or statistical tests of Markov chains \cite{GR92}.

\begin{figure}
    \centering
    \includegraphics[width=0.2\textwidth]{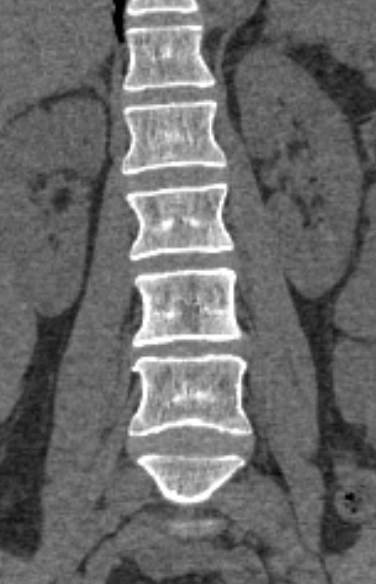}
    \includegraphics[width=0.2\textwidth]{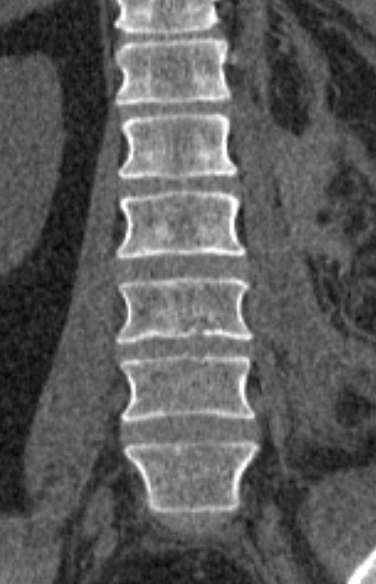}
    \caption{Two-dimensional slices extracted from three-dimensional volumetric computed tomography (CT) images of two patients. Frontal plane view with all five lumbar vertebrae and one thoracic vertebra visible.}
    \label{fig:FixedMovingImage}
\end{figure}
We explain the problem of medical image registration with a two-dimensional example. We extracted two frontal slices from computed tomography (CT) images; see Figure \ref{fig:FixedMovingImage}. The goal is to spatially deform the moving image (Figure \ref{fig:FixedMovingImage}, right) to the fixed image (Figure \ref{fig:FixedMovingImage}, left). The voxel coordinates $(x,y)$ are related by a global affine transformation $A$ and a local deformation field $\varphi(x,y,q)$ ($q$ are the parameters of the deformation) of the form
\[
\begin{bmatrix} x'(q) \\ y'(q) \\ 1 \end{bmatrix} = A \begin{bmatrix} x \\ y \\ 1 \end{bmatrix} + \begin{bmatrix} \varphi_x(x,y,q) \\ \varphi_y(x,y,q) \\ 1 \end{bmatrix}.
\]
Here we will assume that $A$ is known and that we are only estimating the local deformation $\varphi(x,y,q)$.
We choose to parametrize the deformations $\varphi(x,y,q)$ using cubic B-splines with coefficients $q$ and follow the presentation by Andersson, Jenkinson and Smith \cite{AJS07}.  
Let $(q_{i,j})^{(x)}$ denote the spline weights in direction $x$ at control points $(i,j)$, and similarly for $y$.
Then we reshape the two matrices into a column vector 
\[ q = \begin{bmatrix} \operatorname{Vec}\left((q_{i,j})^{(x)}\right) \\ \operatorname{Vec}\left((q_{i,j})^{(y)}\right) \end{bmatrix}, \]
where $\operatorname{Vec}$ takes each row of the matrix and concatenates it into a column vector.
We can write any deformation as a linear combination of tensor products of one dimensional cubic B-splines~\cite{Rueckert99}:
\begin{equation} \label{equ:SplineDeformation}
\varphi(x,y,q) = \sum_{\ell=0}^3 \sum_{m=0}^3 B_\ell(u) B_m(v) q_{i+\ell,j+m},
\end{equation}
where the sum goes over $16$ neighboring control points with indices calculated as 
$i=\lfloor x/n_x \rfloor-1$, $j=\lfloor y/n_y \rfloor-1$, $u=x/n_x-\lfloor x/n_x \rfloor$, and $v=y/n_y-\lfloor y/n_y \rfloor$, and the spline basis functions $B_0(u),\dots,B_3(u)$.
\begin{figure}[t]
    \centering
    \includegraphics[width=1.0\textwidth]{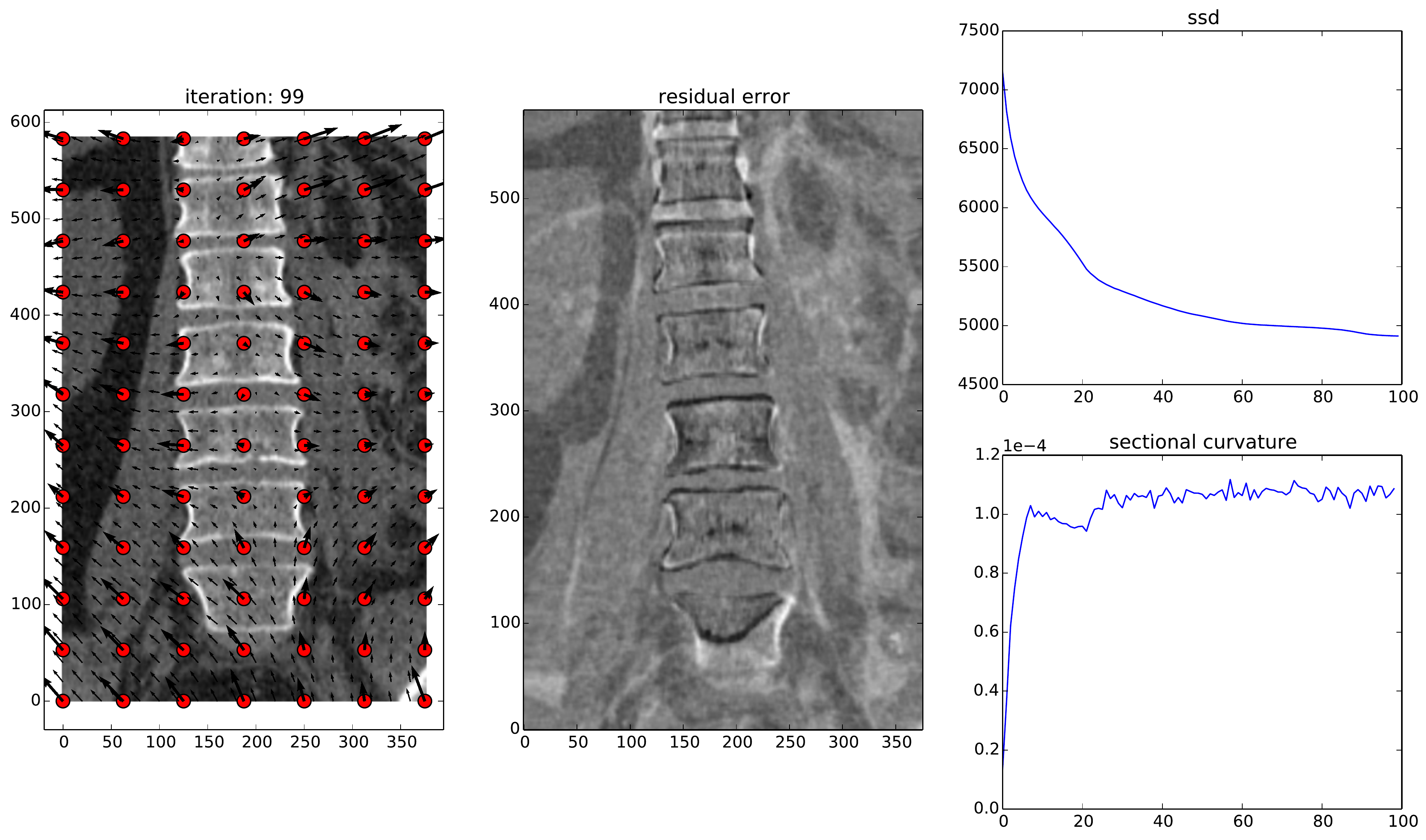}
    \caption{Left: Red control points overlaid on moving image and B-spline weight vectors. The small vector between control point are interpolated using B-splines. Middle: Difference image of deformed moving and fixed after 100 iterations. Bright pixels represent small and dark larger differences. Right: Sum of squared difference similarity metric $\sum_i (F(x_i,y_i)-M(x_i',y_i'))^2$, and mean sample sectional curvature for $d=168$ uniformly sampled orthonormal 2-frames in $\RR^d$ (see Remark \ref{rem:stiefel}) at each iteration.}
    \label{fig:ControlPoints}
\end{figure}
Figure \ref{fig:ControlPoints} shows the $12\times7$ control points that we choose for our example. This choice defines a certain amount of regularity of the deformation: more control points allow for more local flexibility.
The parameters of interest are the weights $q$ of the spline basis function at control points. In our case, we have $12 \times 7$ control points in two dimensions, which gives a total of $168$ parameters. 
In a Bayesian approach we estimate these parameters from data, which are the fixed and moving patient images, by first defining a prior probability 
\[
q \sim \mathcal{N}(0,(\lambda \Lambda)^{-1})
\]
and a likelihood 
\[
(M \mid q) \sim \frac{1}{Z} \exp\left( - \frac{\phi}{2} \sum_{i=1}^N \left( M(x_i'(q),y_i'(q)) - F(x_i,y_i) \right)^2 \right),
\]
computed over all $N$ voxels in images $F$ and $M$ at a predefined coordinate grid $(x_i,y_i)$. 
The deformed coordinates $(x_i'(q),y_i'(q))$, deformed through $\varphi^{(q)}$, usually fall between grid points $(x_i,y_i)$ and need to be linearly interpolated. 

We do not have a physical model that describes how to deform one spine to another. Such a model would only make sense when registering two images of the same patient, for instance taken before and after a surgical procedure. In that case, we could relate voxel intensities to tissue material following a mechanical law and perform mechanical simulations. The corresponding material laws from mechanics would then allow us to define a prior on the possible class of deformations. This is not possible in the absence of such a mechanical model when registering spine images from two different patients. Nevertheless, we can define a prior that is inspired by mechanics, such as the membrane energy, 
$E_m = \lambda \sum_{i\in\Omega} \sum_{j=1}^2 \sum_{k=1}^2 \left[ \partial \varphi_j / \partial x_k \right]_i$,
which measures the size of the first derivative of the deformation. To minimize $E_m$ we look for deformations with small local changes. 
The block precision matrix $\Lambda$ is given by the element-wise partial derivatives of $B$-spline basis functions, for details see \S3.5 in \cite{AJS07}.

The posterior is not analytically tractable and we need to use a Markov chain to sample from it. Since it is high dimensional, HMC is a good candidate. Simpson and coauthors recently sampled from this posterior distribution for brain images using Variation Bayes \cite{Simpson12}. Other recent related work in Bayesian approaches to image registration are \cite{Risholm2010} using Metropolis-Hastings and \cite{VanLeemput09,Zhang13} using HMC. For our example, we sample directly from the posterior distribution
\[
\pi(q) = \frac{1}{Z}\,\pi_1 (M\mid q)\,\pi_0(q)
\]
using HMC and in addition to provide concentration inequalities using our empirical curvature results. The integral of interest is
\[
I = \int_{\RR^{168}}q\,\pi(q) \, dq.
\]
We call  $J$ the Jacobi matrix that contains information about the image gradient and the spline coefficients and is of size (number of voxels) $\times$ (dimension of $q$). For details on how to construct this matrix see \S3.1 in \cite{AJS07}. Then the gradient of the potential energy $V$ is given by 
\[
\grad V = \phi J\tp r + \lambda\Lambda q.
\]
To avoid numerical problems we approximate the Hessian by Taylor expansion around the current $q$ and only keep the first order term
\[
\Hess V = \phi J\tp J + \lambda\Lambda.
\]
In contrast to the multivariate $t$ distribution, we not only need to empirically find the sectional curvature, but also approximate the Hessian of the potential. The error induced by this approximation is not considered here, but can be kept under control as long as the residual error $\|r\|$ is small relative to $\|J\tp J\|$; see \cite{GM78} and \cite{AJS07a} for details.

Figure \ref{fig:ControlPoints} shows sectional curvature numerically computed at different iterations steps $k$:
\[ 
q^{(k)} = q^{(k-1)} - (\Hess V)^{-1} \grad V.
\]
This corresponds to a Gauss-Newton minimization of the potential function $V$; see \cite{GM78} for details. If we assume that the local minimum of the potential function $V$ is an interesting mode of the posterior probability distribution $\pi$, then the sectional curvature close to that minimum will tell us how a HMC Markov chain will perform within that mode. From Figure \ref{fig:ControlPoints}, we can see that the sectional curvature is fluctuating around $10^{-4}$ after only few iterations. This is roughly the same curvature obtained in the multivariate Gaussian example, and thus the concentration inequalities carry over. 

A full analysis of HMC for an image registration application to compare the shape of spines of back pain and abdominal pain patients is in preparation; see \cite{SRH14} for details.

\section{Conclusions and Open Problems} \label{sec:conclusion}

The introduction of the Jacobi metric into the world of Markov chains promises to yield new links between geometry, statistics, and probability. 
It provides us with an intuitive connection between these fields by distorting the underlying space. 
We only scratch the surface here, and naturally we introduce some open problems:

\begin{itemize}
\item In this article, we have not focused on the numerical solving of the Hamilton equations (\ref{hamilton}), although our simulations were promising on this point. There are standard methods of solving differential equations numerically, such as the leapfrog method (see \cite{Neal11}); how might we modify Joulin and Ollivier's concentration inequality to include the parameters for the leapfrog algorithm or other algorithms?
\item Girolami and Calderhead \cite{GC11} introduced an elegant way to adapt the proposal distribution from which the momentum vector is drawn based on the underlying geometry of the distribution. Our framework can be applied also in this setting by using a non-standard reference metric. The difficulty here is to write down the expression for the sectional curvature.
\item Our approach can be extended to the setting of an infinite-dimensional Hilbert space. Since our results improve in high dimensions, we expect everything  to carry over to the infinite-dimensional setting, but we have not investigated this. Recently, there has been some work, for instance in \cite{Beskos11}, on HMC on a Hilbert space, suggesting that this topic is worthy of further study.
\item It remains to be investigated whether we can still obtain error bounds in the case of some amount of negative curvature, perhaps in a small but essential region. Alternatively, is it possible to modify the algorithm so as to give positive curvature in cases where we currently have some negative curvature?
\end{itemize}

We are currently working on estimating sectional curvatures in large-scale medical image registration projects. We believe that geometric properties of the Markov chains in such high dimensional problems can be used to assess the convergence of the chain and provides a  complement to traditional visual assessment via trace plots or statistical tests. In the context of registration, this is important since computing one step of the chain can be computationally expensive. Therefore an estimate on the number of steps becomes essential.

\bibliographystyle{alpha}
\bibliography{hmc}

\end{document}